\newcommand{\R}{\mathbb R}
\newcommand{\Sphere}{\mathbb S}
\newcommand{\Z}{\mathbb Z}
\newcommand{\m}{m}
\newcommand{\intd}{\, \mathrm{d}}
\newcommand{\bp}{\Phi}
\newcommand{\abs}[1]{\lvert #1 \rvert}
\newtheorem{theorem}{Theorem}
\newtheorem{proposition}[theorem]{Proposition}
\newtheorem{definition}[theorem]{Definition}
\newtheorem{lemma}[theorem]{Lemma}
\newtheorem{remark}[theorem]{Remark}
\numberwithin{equation}{section}
\numberwithin{theorem}{section}
\title{Magnetic skyrmions under confinement}
\author{Antonin Monteil\footnote{Laboratoire d'Analyse et de
    Mathématiques Appliquées, Université Paris-Est - Créteil
    Val-de-Marne, Bâtiment P, 94010 Créteil, France} \footnotemark[5]
  \and Cyrill B. Muratov\footnote{Department of Mathematical Sciences,
    New Jersey Institute of Technology, Newark, NJ 07102, USA}
  \footnote{Dipartimento di Matematica, Universit\`a di Pisa, Largo
    Bruno Pontecorvo 5, 56127 Pisa, Italy} \and Theresa
  M. Simon\footnote{Institut f\"{u}r Analysis und Numerik,
    Universit\"{a}t M{\"u}nster, 48149 M{\"u}nster, Germany} \and
  Valeriy V. Slastikov\footnote{School of Mathematics, University of
    Bristol, Bristol BS8 1UG, United Kingdom}}
\date{\today}
\begin{document}
\maketitle

\begin{abstract}
  We present a variational treatment of confined magnetic skyrmions in
  a minimal micromagnetic model of ultrathin ferromagnetic films with
  interfacial Dzylashinksii-Moriya interaction (DMI) in competition
  with the exchange energy, with a possible addition of perpendicular
  magnetic anisotropy. Under Dirichlet boundary conditions that are
  motivated by the asymptotic treatment of the stray field energy in
  the thin film limit we prove existence of topologically non-trivial
  energy minimizers that concentrate on points in the domain as the
  DMI strength parameter tends to zero. Furthermore, we derive the
  leading order non-trivial term in the $\Gamma$-expansion of the
  energy in the limit of vanishing DMI strength that allows us to
  completely characterize the limiting magnetization profiles and
  interpret them as particle-like states whose radius and position are
  determined by minimizing a renormalized energy functional.  In
  particular, we show that in our setting the skyrmions are strongly
  repelled from the domain boundaries, which imparts them with
  stability that is highly desirable for applications. We provide
  explicit calculations of the renormalized energy for a number of
  basic domain geometries.
\end{abstract}

\tableofcontents

\section{Introduction}
\label{sec:intro}

Magnetic skyrmions are particle-like non-collinear spin textures that
were predicted to exist in non-centrosymmetric ferromagnets some 30
years ago \cite{bogdanov89,bogdanov89a,bogdanov94} and have been
recently observed in a number of magnetic systems
\cite{muhlbauer09,yu10,heinze11,romming13}. These coherent spin states
are enabled by the non-trivial topological characteristics of their
magnetizations \cite{nagaosa13,fert17} which endow them with a
considerable degree of thermal stability down to nanoscale and permit
observation of magnetic skyrmions at room temperature
\cite{moreau-luchaire16,boulle16,woo16}. The latter property makes
magnetic skyrmions attractive candidates as information carriers in a
new generation of spintronic devices for information technology
\cite{fert17,zhang20}.

In ultrathin ferromagnetic films exhibiting skyrmions, the
magnetization of the material may be described as a map from a
two-dimensional plane to a three-dimensional sphere at the level of
the continuum \cite{landau8,rohart13}. As skyrmions are particle-like
localized perturbations of the uniform ferromagnetic state, they must
belong to a homotopy class of the equivalent (after a stereographic
projection) continuous maps from $\Sphere^2$ to itself. These classes
are characterized by an integer topological degree, and the observed
magnetic skyrmion configurations display the degree $+1$ of the
identity map from $\Sphere^2$ to $\Sphere^2$
\cite{nagaosa13}.\footnote{Note a sign error in the computation of the
  skyrmion number in this reference.} Mathematically, these
configurations may be viewed as local minimizers of a suitable
micromagnetic energy functional among configurations within the above
homotopy class, and their existence was established for several models
\cite{melcher14,bms:prb20,bernand2019quantitative,bms:scipost22}.

In a minimal model relevant to ultrathin ferromagnetic films capped
with a layer of a heavy metal \cite{rohart13}, the energy consists of
a sum of the exchange energy forcing the magnetization to be constant
in space, the interfacial Dzyaloshinskii-Moriya interaction (DMI) that
promotes rotation of the magnetization vector, as well as the
perpendicular magnetic anisotropy that forces the magnetization to
align normally to the film plane and/or the Zeeman energy associated
with the perpendicular applied magnetic field that has the same effect
(for technical details, see section \ref{sec:main}). Note that the
problem of existence above is closely related to the one studied by
Lin and Yang in a two-dimensional Skyrme model
\cite{lin2004existenceofenergy,lin2004existence}. Bernand-Mantel et
al. established the asymptotic behavior of skyrmion solutions in the
case of vanishingly small DMI strength and demonstrated that in this
limit the magnetization profiles are close to the shrinking
Belavin-Polyakov profiles, i.e., the degree +1 harmonic maps from
$\R^2$ to $\Sphere^2$ \cite{belavin1975metastable}, which in the
minimal model described above are of N\'eel type
\cite{bms:prb20,bernand2019quantitative}.

In the physics literature, the inability to continuously deform a
topologically non-trivial skyrmion configuration into the
topologically trivial uniform ferromagnetic state is often referred to
as {\em topological protection} of magnetic skyrmions
\cite{nagaosa13}. We note that this is somewhat of a misnomer, as a
topologically non-trivial skyrmion configuration may in fact be
deformed discontinuously into the uniform ferromagnetic state via core
collapse by crossing a finite energy barrier \cite{bms22}. In
contrast, in finite samples such as nanodots or nanostrips that are of
particular interest to applications, there is strictly speaking no
topological obstruction that prohibits a homotopy between a skyrmion
solution and the trivial solution for example by moving the skyrmion
``through'' the boundary.  Nevertheless, these two solutions may still
be separated by an energy barrier, and the question of existence of
skyrmion solutions becomes more subtle.

In the minimal micromagnetic model that includes the stray field
effect only via an effective anisotropy term \cite{winter61}, Rohart
and Thiaville numerically constructed the N\'eel type
radially-symmetric skyrmion solutions in a circular nano-dot
\cite{rohart13}. It is unclear, however, whether these solutions
always represent local energy minimizers, as the exchange energy in
such a solution may be continuously lowered by moving the skyrmion
towards the domain boundary, breaking the radial symmetry of the
solution (see also section \ref{sec:free}). Numerical studies of the
minimal model in confined geometries do indicate the presence of a
finite energy barrier towards skyrmion disappearance through the
boundary under certain conditions
\cite{cortes-ortuno17,cortes-ortuno19,riveros21}. The solutions in
nanodisks were further analyzed numerically within the full
micromagnetic model that includes the non-local stray field effects
\cite{sampaio13,tejo18,aranda18a}. In particular, the obtained
numerical profiles exhibit a strong perpendicular alignment of the
magnetization at the domain edges, which can be explained by an
additional contribution of the stray field enhancing the perpendicular
magnetic anisotropy there (see also the experimental observations in
\cite{ho19}). As was shown in \cite{dms22}, in suitable thin film
limits for the considered class of materials the effect of the stray
field may be asymptotically accounted for via an effective penalty
term forcing the magnetization to align with the normal to the film
plane at the domain boundary, similarly to what happens in other
ferromagnetic thin film problems \cite{kohn05arma}. In our problem,
this should lead to the skyrmion being repelled from the sample edges.

In view of the above arguments, it is physically reasonable to
consider the situation in which the magnetization at the film edge is
rigidly aligned with a normal to the film plane. This may either be
achieved via sending the penalization of the deviations at the
boundary to infinity (corresponding to an appropriate choice of the
material and geometric parameters \cite{dms22}), or it could be the
result of patterning the substrate of an extended ferromagnetic film
with a strongly magnetically anchoring material (for a related
approach, see \cite{ohara21}). Using these Dirichlet boundary
conditions restores the possibility of topological protection, as
continuous maps from a bounded two-dimensional domain to $\Sphere^2$
with the boundary values pinned to a single direction can once again
be classified by their topological degree. However, it is still not a
priori clear whether minimizers would be attained in such a setting,
as the possibility of a skyrmion shrinking to a point and collapsing
is not excluded.

In this paper, we present a variational treatment of the minimal
micromagnetic model of confined magnetic skyrmions in ultrathin
ferromagnetic films with interfacial DMI, in which the confinement is
provided by the Dirichlet boundary condition that forces the
magnetization to take one direction normal to the film plane at the
two-dimensional domain edge. We first prove existence of degree $+1$
minimizers of the energy consisting of the sum of the exchange and the
interfacial DMI terms (with a possible addition of the perpendicular
magnetic anisotropy term). We then focus on the regime in which the
DMI is a perturbation to the exchange energy and develop a
$\Gamma$-expansion of the energy in the limit of vanishing DMI
strength. This leads to the appearance of a {\em renormalized energy}
which determines asymptotically both the location and the radius of
the skyrmion, whose shape is shown to be close to a N\'eel type degree
+1 harmonic map from $\R^2$ to $\Sphere^2$. Lastly, we explicitly
construct the minimizers of the renormalized energy in the case of
disk and strip domains. In particular, we show that the energy
minimizing skyrmions are located in the disk center and on the strip
midline, respectively, due to the effective repulsive interaction
provided by the excess exchange energy from the tail of the
magnetization profile. This confirms the physical expectation based on
the numerical simulations that skyrmions can be robust particle-like
objects even in finite samples of varying geometry.

\subsection{Informal discussion of the results}

From a mathematical standpoint, the confinement provided by the
boundary data in fact simplifies the proof of existence of skyrmions
compared to the case of the whole plane, since the translational
symmetry of the problem is broken.  In order to obtain the parameters
describing the asymptotic behavior, we apply the rigidity of degree
$\pm 1$ harmonic maps from $\R^2$ to $\Sphere^2$ obtained by
Bernand-Mantel, Muratov and Simon \cite{bernand2019quantitative} after
extending the magnetizations by a constant outside the domain using
the Dirichlet boundary condition.  This allows us to define the
location, radius and rotation angle of the skyrmion.  As is common in
$\Gamma$-convergence arguments, we first obtain qualitative
information such as linear scaling of the radius in the DMI constant
or the fact that skyrmions are repelled from the boundary via
non-optimal estimates, in order to obtain compactness properties of
the energies.

A finer analysis requires us to also keep track of the tail correction
to the skyrmion profile necessary to enforce the boundary condition.
Here, the skyrmion position interacts with the boundary through the
solution of the linearization of the harmonic map problem at the
constant state given by the boundary condition, i.e., Laplace's
equation for the in-plane components.  A correction to the skyrmion
core is not necessary to first order as the Belavin-Polyakov profiles
are the exact degree one minimizers of the Dirichlet energy on the
whole plane.  The location of the optimal skyrmion is set by
minimizing the interaction with the boundary, and the N{\'e}el
character of the profile arises via minimizing the DMI term among all
rotation angles.  The radius then optimizes the balance of the two
contributions.  In simple domains, such as balls and strips, the
Laplace's equation determining the tail correction can be explicitly
solved by means of complex analysis, thus giving the full solution of
the limiting problem in these cases.

Finally, we additionally include the perpendicular anisotropy at an
appropriate scaling in the DMI constant.  As the radius scales
linearly in the DMI constant and the anisotropy energy of an exact
Belavin-Polyakov profile is well-known to have a logarithmic
divergence in its tail
\cite{doring2017compactness,bernand2019quantitative}, we consider
effective anisotropies scaled down with the logarithm of the DMI
strength.  In this regime the anisotropy is essentially a continuous
perturbation of our original problem with respect to the topology we
determine the $\Gamma$-limit in.  Furthermore, due to the fact that it
is only the tail that contributes to the anisotropy at leading order
and that it is of logarithmic character, we obtain that its
contribution in the limit is in fact independent of the shape of the
domain.

We note that the variational problem considered by us bears several
similarities with the one for the classical Ginzburg-Landau model
(without the magnetic field) \cite{bethuel,pacard}, in which the
boundary data with a non-trivial topological degree force minimizers
to form point-like vortices in the domain interior as the small
parameter of the model goes to zero. Our problem for the magnetic
skyrmion behavior in the limit of vanishing DMI strength thus provides
a micromagnetic setting for the celebrated questions of Matano for
Ginzburg-Landau vortices \cite{bethuel,struwe94}. Towards answering
this type of questions, we show that the skyrmion in a disk
concentrates at the disk center in the limit and explicitly compute
its asymptotic magnetization profile. We point out, however, that the
analysis of the limit micromagnetic problem is considerably more
delicate, as in contrast to the Ginzburg-Landau problem the energy of
a single skyrmion remains finite in the limit, and, therefore the tail
contribution of the Dirichlet energy does not decouple from the
problem for the skyrmion core. In particular, contrary to the
Ginzburg-Landau vortex problem \cite{mironescu96,shafrir95}, the
radius of the skyrmion turns out to be affected by the shape of the
domain through the solution of the limit problem in the tail.

As in the problem of Ginzburg-Landau vortices \cite{bethuel,pacard},
it is also natural to ask whether multiple skyrmion configurations may
be similarly described in the vanishing DMI strength limit. In fact,
the micromagnetic energy has been shown numerically to exhibit a
multitude of local energy minimizers other than a single magnetic
skyrmion \cite{rybakov19,kuchkin20}. However, our present analysis
does not easily extend to the case of magnetization configurations of
degree other than $+ 1$. Even at the level of existence we cannot rule
out the collapse of minimizing sequences, failing to yield minimizers
with a prescribed degree in this case. For the limit behavior of
vanishing DMI strength, we also no longer have the quantitative
rigidity estimate for the harmonic maps of arbitrary degree, which is
the key tool in our analysis of a single skyrmion
\cite{bernand2019quantitative}. In fact, such an estimate has been
recently shown to be false for degree 2 harmonic maps
\cite{deng20}. Similarly, we cannot give a positive answer to the
existence of anti-skyrmions, i.e., minimizers among configurations
with degree $-1$, as we do not know whether the basic energy bound in
Lemma \ref{lemma:construction} holds in this class.

\subsection{Outline of the paper}
This paper is organized as follows. In section \ref{sec:main}, we give
the precise definitions of the micromagnetic energy, admissible
classes, and the limit processes under consideration and then formulate
our main results. In section \ref{sec:exist}, we prove existence of
minimizers in the considered non-trivial topological class of maps
with degree +1. In section \ref{sec:Gamma}, we derive the first-order
term in the $\Gamma$-expansion of the energy in the DMI strength
beyond the classical topological lower bound at zeroth order. Then, in
section \ref{sec:lim} we explicitly compute the renormalized energy
for a number of geometries. Finally, in section \ref{sec:anicont} we
show how to include the perpendicular magnetic anisotropy as a
continuous perturbation to the limit energy.

\paragraph{Acknowledgements.}

The work of C.B.M. was supported, in part, by NSF via grant
DMS-1908709.  A.M. and V.V.S. acknowledge support by Leverhulme grant
RPG-2018-438.  The work of T.M.S. is funded by the Deutsche
Forschungsgemeinschaft (DFG, German Research Foundation) under
Germany's Excellence Strategy EXC 2044 –390685587, Mathematics
Münster: Dynamics – Geometry – Structure, and by the Deutsche
Forschungsgemeinschaft (DFG, German Research Foundation) - Project-ID
211504053 - SFB 1060.

\section{Main results}
\label{sec:main}

\subsection{Definition of the energy}
On a bounded domain $\Omega \subset \R^2$ with Lipschitz
  boundary, we consider the set of admissible functions
\begin{align}
  \mathcal A = \left\{ \m \in H^1(\Omega; \Sphere^2) : \ \m= -
  e_3 \hbox{ on } \partial \Omega , \ \mathcal{N}(\m) =1 \right\}, 
\end{align}
where the degree of a function $\m \in \mathring H^1(\R^2;\Sphere^2)$
is defined as
\begin{align}
  \label{eq:N}
  \mathcal N (\m) =\frac{1}{4 \pi} \int_\Omega \m \cdot (\partial_1 \m
  \times \partial_2 \m) \intd x, 
\end{align}
and we extend $m\in \mathcal{A}$ to the whole of $\R^2$ by setting
$m = -e_3$ outside $\Omega$. Here, as usual, we define
\begin{align}
    \mathring H^1(\R^2, \Sphere^2) := \left\{m \in H^1_{\mathrm{loc}}
  (\R^2; \R^3) :
  \int_{\R^2} |\nabla m|^2 \intd x < \infty, \ | m | = 1 \
  \text{a.e. in } \R^2 \right\}. 
\end{align}
It is well known that $\mathcal N(\m) \in \Z$
for any $\m \in \mathring H^1(\R^2;\Sphere^2)$, see Brezis and Coron
\cite{brezis1983large}.  For $\m \in \mathcal{A}$ we wish to minimize
the energy
\begin{align}\label{def:energy}
  \mathcal{E}_\kappa(\m) = \int_{\Omega} \left( |\nabla \m|^2 -2 \kappa 
  \m'\cdot \nabla m_3   \right) \intd x, 
\end{align}
where $\kappa \in \R$ is the DMI constant and we use the convention
$m = (m',m_3)$, with $m'$ taking values in $\R^2$.  Passing from $\m$
to $\tilde \m := (-m', m_3)$ when minimizing $\mathcal E_\kappa$ in
the case of $\kappa<0$, throughout the rest of the paper we may assume
that $\kappa \geq 0$.

\subsection{Statement of the results}

We first make sure that the energy indeed admits minimizers. Due to
the Dirichlet boundary conditions, minimizers exist for all $\kappa>0$
sufficiently small even in the absence of the anisotropy penalizing
the out-of-plane component of the magnetization. Note that at the same
time the infimum of the energy is not attained for $\kappa = 0$ (see
below).
  
\begin{theorem}\label{prop:existence}
  There exists $\kappa_0 > 0$ depending only on $\Omega$ such that for
  all $0 < \kappa < \kappa_0$ there exists a minimizer of
  $\mathcal{E}_\kappa$ over $\mathcal{A}$.
\end{theorem}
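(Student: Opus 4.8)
The plan is to run the direct method of the calculus of variations, with the only subtlety being the preservation of the degree constraint in the limit. Fix $\kappa > 0$ small. First I would show the admissible class is nonempty and exhibit a competitor whose energy is bounded above in a useful way — e.g.\ a rescaled Belavin--Polyakov profile of degree $+1$ glued to $-e_3$ outside a small ball contained in $\Omega$, which for $\kappa$ small enough lies in $\mathcal{A}$ and has energy $\mathcal{E}_\kappa(\m) \le 4\pi - c\kappa$ for some $c>0$ (this is essentially the content of the bound in Lemma~\ref{lemma:construction}, which I may assume). Combined with the pointwise identity $|\nabla \m|^2 - 2\kappa\, \m'\cdot\nabla m_3 \ge |\nabla \m|^2 - 2\kappa|\nabla \m| \ge (1-\kappa^2)|\nabla\m|^2 - \kappa^2$ on $\Omega$, this gives a uniform $H^1$ bound on any minimizing sequence $(\m_k)$, as well as $\inf_{\mathcal A}\mathcal E_\kappa < 4\pi$, which is the strict-inequality room I will need.

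Next, extract a subsequence with $\m_k \rightharpoonup \m_\infty$ weakly in $H^1(\Omega;\R^3)$ and strongly in $L^2$; the sphere constraint $|\m_\infty| = 1$ a.e.\ and the boundary condition $\m_\infty = -e_3$ on $\partial\Omega$ (the latter via weak closedness of the affine subspace, using the trace operator's continuity) pass to the limit. Weak lower semicontinuity handles $\int_\Omega |\nabla\m|^2$, and the DMI term $\int_\Omega \m'\cdot\nabla m_3$ is continuous under $\m_k \to \m_\infty$ strongly in $L^2$ together with $\nabla\m_k \rightharpoonup \nabla\m_\infty$; hence $\mathcal E_\kappa(\m_\infty) \le \liminf_k \mathcal E_\kappa(\m_k) = \inf_{\mathcal A}\mathcal E_\kappa$. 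The remaining — and main — obstacle is that $\mathcal N$ is \emph{not} weakly continuous: mass can escape by bubbling, so a priori $\mathcal N(\m_\infty) = 1 - n$ for some $n \ge 0$ counting degree lost to concentration. To rule this out I would invoke the quantization/concentration-compactness dichotomy of Brezis--Coron: after the extension by $-e_3$, each bubble carries at least $4\pi$ of Dirichlet energy (a nonconstant harmonic sphere has energy $\ge 4\pi$), so if $n \ge 1$ then $\liminf_k \int_{\R^2}|\nabla\m_k|^2 \ge \int_{\R^2}|\nabla\m_\infty|^2 + 4\pi n \ge 4\pi$, and together with the lower bound on the DMI term this would force $\inf_{\mathcal A}\mathcal E_\kappa \ge 4\pi(1-\kappa^2) - \kappa^2|\Omega|$, contradicting $\inf_{\mathcal A}\mathcal E_\kappa < 4\pi$ once $\kappa < \kappa_0(\Omega)$ is small. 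Therefore $n = 0$, no bubbling occurs, $\m_k \to \m_\infty$ strongly in $H^1$, $\mathcal N(\m_\infty) = 1$, and $\m_\infty \in \mathcal A$ is the desired minimizer.

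The delicate point deserving care is the bubbling analysis: one must argue that any loss of degree is accompanied by a full quantum $4\pi$ of exchange energy concentrating at finitely many points, which is where I would either cite Brezis--Coron directly or reproduce a short argument via the conformal rescaling of $\m_k$ around concentration points and the Pohozaev/harmonic-map classification of the resulting weak limits on $\R^2$. Everything else is standard; in particular the extension by $-e_3$ is what makes the degree a well-defined integer on $\mathring H^1(\R^2;\Sphere^2)$ and lets the whole-plane quantization theorem apply. Note finally that the construction competitor also shows the infimum at $\kappa = 0$ equals $4\pi$ and is \emph{not} attained (a minimizer would be a harmonic map $\Omega\to\Sphere^2$ constant on $\partial\Omega$, hence constant, contradicting degree $1$), which is consistent with needing $\kappa>0$ for existence.
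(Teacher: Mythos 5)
Your overall skeleton (direct method, uniform $H^1$ bound, weak lower semicontinuity of the Dirichlet term, weak-times-strong continuity of the DMI term, and a strict upper bound on the infimum to prevent loss of degree) is the same as the paper's. The genuine problem is in the step where you rule out bubbling, and it is quantitative. First, with the paper's normalization $\int_\Omega|\nabla m|^2$ (no factor $\tfrac12$) the topological quantum is $8\pi$, not $4\pi$, and—more importantly—the construction of Lemma \ref{lemma:construction} only gives $\inf_{\mathcal A}\mathcal E_\kappa\le 8\pi-C\kappa^2$, a \emph{quadratic} deficit, because the optimal competitor's radius must shrink like $\kappa$; your claimed bound $\mathcal E_\kappa\le 4\pi-c\kappa$ with a linear deficit is not available. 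Second, your pointwise inequality $2\kappa|\nabla m|\le\kappa^2|\nabla m|^2+\kappa^2$ is false (test $|\nabla m|=1/\kappa$); the correct coercivity is $\mathcal E_\kappa\ge(1-C\kappa)\int_\Omega|\nabla m|^2$ (Lemma \ref{lemma:a_priori}, via Cauchy--Schwarz and Poincar\'e), whose error relative to $8\pi$ is of order $\kappa$. Consequently your contradiction collapses once the constants are corrected: under bubbling your crude estimates only give $\liminf\mathcal E_\kappa\ge 8\pi-O(\kappa)$ (or $8\pi-O(\kappa^2)$ with uncontrolled constants), which does \emph{not} contradict $\inf_{\mathcal A}\mathcal E_\kappa\le 8\pi-C\kappa^2$. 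The argument as written therefore fails exactly at the point it was supposed to do the work.

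The repair is to not estimate the DMI term crudely along the sequence but to use its exact limit: by weak-times-strong convergence, $\int_\Omega m_k'\cdot\nabla m_{k,3}\intd x\to\int_\Omega m_\infty'\cdot\nabla m_{\infty,3}\intd x$. If the degree drops in the limit, the defect inequality $\liminf_k\int_{\R^2}|\nabla m_k|^2\intd x\ge\int_{\R^2}|\nabla m_\infty|^2\intd x+8\pi$ then yields $\liminf_k\mathcal E_\kappa(m_k)\ge\mathcal E_\kappa(m_\infty)+8\pi\ge 8\pi$ for $\kappa$ small (using $\mathcal E_\kappa(m_\infty)\ge0$ from the coercivity), contradicting $\inf_{\mathcal A}\mathcal E_\kappa<8\pi$ with no rate comparison needed. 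Note also that the quantization statement you want to cite must be one valid for arbitrary minimizing sequences (the concentration profiles of a non--Palais--Smale sequence need not be harmonic spheres); the cleanest way to get exactly the needed defect inequality is the Brezis--Coron completed-squares identity \eqref{eq:squares}, which makes $\mathcal E_\kappa\pm 8\pi\mathcal N$ weakly lower semicontinuous and gives $0<\mathcal N(m_\infty)<2$ directly from $0\le\mathcal E_\kappa(m_\infty)$ and $\inf_{\mathcal A}\mathcal E_\kappa<8\pi$—this is precisely the paper's proof, and it renders the bubbling machinery unnecessary. (Minor: the infimum at $\kappa=0$ is $8\pi$, not $4\pi$.)
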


More importantly, we are also able to give a precise description of
the minimizers for $\kappa$ being small, i.e., the parameter regime in
which one does have skyrmions.  In particular, we can express their
location and radius in terms of an optimization over the tail of the
skyrmion.

To make this statement precise, we define the standard
Belavin-Polyakov profile
\begin{align}\label{def:BPprofile}
  \bp (x):= \begin{pmatrix}
    - \frac{2x}{1+|x|^2} , & \frac{1-|x|^2}{1+|x|^2}
	\end{pmatrix}
\end{align}
for $x\in \R^2$, which is the negative of the inverse stereographic
projection, and we denote the set of all Belavin-Polyakov profiles by
\begin{align}\label{def:bp_profiles}
  \mathcal{B} := \left\{R \bp \left( \rho^{-1} (\cdot - {
  a})\right): R \in SO(3), \ \rho>0, \  { a} \in \R^2 \right\}. 
\end{align}
They arise as configurations achieving equality in the sharp
topological bound
\begin{align}\label{eq:topological_bound}
  \int_{\R^2} |\nabla \m|^2 \intd x \geq 8\pi |\mathcal{N}(\m)|
\end{align}
with degree $\mathcal{N}=1$.  In particular, they are precisely the
minimizing harmonic maps of degree one, see Belavin and Polyakov
\cite{belavin1975metastable} and \cite[Lemma A.1]{brezis1983large}.
It is therefore not surprising and indeed well known
\cite{doring2017compactness,bernand2019quantitative}, that minimizers
of micromagnetic-type energies augmented with DMI should approach the
set $\mathcal{B}$ when the Dirichlet energy dominates, i.e., when
$\kappa \ll 1$.  We can thus attempt to express the location and the
radius of the skyrmions as $ a \in \R^2$ and $\rho>0$ of an approached
Belavin-Polyakov profile in this regime. Notice that for $\kappa = 0$
an equality in \eqref{eq:topological_bound} is achieved by a sequence
of truncated Belavin-Polyakov profiles with vanishing radius, which
fails to converge to an element in $\mathcal A$. This statement
remains true also in the presence of an additional out-of-plane
anisotropy term (see section \ref{sec:aniso}).

However, as we expect the radius of the minimizers to shrink compared
to the size of the domain as $\kappa \to 0$, we can only expect the
close-by Belavin-Polyakov profiles to converge after a
rescaling. Consequently, we have to find a Belavin-Polyakov profile
for each minimizer at positive $\kappa$ in a controlled way.  An
appropriate set of tools for such a purpose has been identified by
Bernand-Mantel, Muratov, and Simon in the form of a quantitative
rigidity result for degree one harmonic maps:

\begin{theorem}[{\cite[Theorem
    2.4]{bernand2019quantitative}}]\label{thm:rigidity} 
  For $\m \in \mathcal{A}$, let the Dirichlet excess be
\begin{align}
	Z(\m) := \int_{\Omega} |\nabla \m|^2 \intd x - 8\pi
\end{align}
and the Dirichlet distance to the set of the Belavin-Polyakov profiles
be
\begin{align}
  D(m; \mathcal{B}) := \inf_{\mathbf{\phi}\in \mathcal{B}} \left(
  \int_{\R^2} | \nabla ( \m - \mathbf{\phi})|^2 \intd x
  \right)^\frac{1}{2},
\end{align}
where as usual $m$ is extended outside of $\Omega$ by $-e_3$.  Then
the infimum in the definition of $D(m; \mathcal{B})$ is achieved,
i.e., there exists a Belavin-Polyakov profile closest to each
$m \in \mathcal{A}$. Moreover, there exists a universal constant
$\eta>0$ such that for all $m\in \mathcal{A}$ we have
\begin{align}\label{eq:rigidity}
  \eta D^2 (\m;\mathcal{B}) \leq Z(\m).
\end{align}
\end{theorem}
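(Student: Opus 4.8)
Since the statement is quoted from \cite{bernand2019quantitative}, I only sketch the route I would follow to prove it. The plan is to reformulate the excess $Z$ via a Bogomolny identity, settle attainment of $D$ by the direct method in the finite‑dimensional parameter space of $\mathcal B$, prove a local coercivity estimate by linearizing the defining first‑order equation of $\mathcal B$, and finally upgrade this to a global estimate with a universal constant by a concentration‑compactness argument. The reformulation is the natural starting point: for $\m$ of degree one the Bogomolny identity
\begin{equation}
  \int_{\R^2}\bigabs{\partial_1\m + \m\times\partial_2\m}^2\intd x
  = \int_{\R^2}\abs{\nabla\m}^2\intd x - 8\pi\,\mathcal N(\m)
\end{equation}
shows that $Z(\m)=\int_{\R^2}\abs{\partial_1\m+\m\times\partial_2\m}^2\intd x\ge 0$, with equality exactly on the solution set of the first‑order equation $\partial_1\m+\m\times\partial_2\m=0$. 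Under stereographic projection this is a Cauchy--Riemann system whose finite‑energy degree‑one solutions are precisely $\mathcal B$ (as recorded via \cite[Lemma A.1]{brezis1983large}). Thus $Z$ is literally the squared $L^2$‑deficit from solving the equation that defines $\mathcal B$, and $\eta D^2\le Z$ is a quantitative stability statement for this equation.

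For attainment I would write a minimizing sequence for $D(\m;\mathcal B)$ as $\phi_k=R_k\bp\bigl(\rho_k^{-1}(\cdot-a_k)\bigr)$ and rule out degeneration of the parameters. As $\rho_k\to0$, $\rho_k\to\infty$, or $\abs{a_k}\to\infty$ one has $\nabla\phi_k\rightharpoonup 0$ weakly in $L^2$, so $\int_{\R^2}\abs{\nabla(\m-\phi_k)}^2\intd x\to\int_{\R^2}\abs{\nabla\m}^2\intd x+8\pi$. Using that for a harmonic profile $\int_{\R^2}\nabla\m\cdot\nabla\phi\intd x=\int_{\R^2}\abs{\nabla\phi}^2(\m\cdot\phi)\intd x$, a degree‑one $\m$ satisfies $\sup_{\phi\in\mathcal B}\int_{\R^2}\nabla\m\cdot\nabla\phi\intd x>0$ (place a concentrated bubble where $\m\neq-e_3$ and align its core), so the infimum lies strictly below the degenerate value. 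Hence $\rho_k,a_k$ stay in a compact set and $R_k$ remains in the compact group $SO(3)$; continuity in the parameters then yields a closest profile $\phi_*\in\mathcal B$.

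For the inequality itself I would first work locally. Let $\phi$ be the closest profile and decompose $\m=\phi+u$ with $u$ orthogonal in $\dot H^1$ to $T_\phi\mathcal B$. Linearizing $\mathcal L(\m):=\partial_1\m+\m\times\partial_2\m$ at $\phi$, where $\mathcal L(\phi)=0$, gives $Z(\m)=\int_{\R^2}\abs{D\mathcal L_\phi\,u}^2\intd x+o(\norm{\nabla u}_{L^2}^2)$, and the crux is that the kernel of $D\mathcal L_\phi$ on admissible (tangent‑to‑$\Sphere^2$) variations equals exactly the six‑dimensional space $T_\phi\mathcal B$ of rotation, dilation and translation modes, while $D\mathcal L_\phi$ is bounded below on its orthogonal complement; this yields $Z(\m)\ge c\,D^2(\m)$ for $D(\m)$ small. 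To globalize I would argue by contradiction: if $Z(\m_n)/D^2(\m_n)\to0$ then either $D(\m_n)\to0$, contradicting the local estimate, or $\liminf D(\m_n)=\delta>0$. In the latter case the crude bound $D^2(\m)\le 2\int_{\R^2}\abs{\nabla\m}^2\intd x+16\pi=2Z(\m)+32\pi$ forces $Z(\m_n)$ bounded and hence $Z(\m_n)\to0$; then $\m_n$ has degree one, energy tending to $8\pi$ and vanishing Bogomolny deficit, so the bubbling analysis of Brezis--Coron \cite{brezis1983large} gives strong $\dot H^1$ convergence, modulo the action of $\mathcal B$, to a Belavin--Polyakov profile, i.e.\ $D(\m_n)\to0$, a contradiction.

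The hard part will be the local coercivity: showing that $D\mathcal L_\phi$ has no null directions beyond $T_\phi\mathcal B$ and is bounded below on its complement, in the non‑compact, conformally invariant geometry where ordinary Poincaré inequalities fail. This is where the explicit structure of $\bp$ and weighted (Hardy‑type) estimates adapted to the conformal variables must be used, and where the completeness of the degree‑one rational maps as a moduli space enters to exclude spurious zero modes. The globalization step is comparatively standard but still requires care to control the non‑compact symmetry group so that no energy concentrates or escapes without being captured by an element of $\mathcal B$.
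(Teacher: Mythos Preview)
The paper does not contain its own proof of this theorem: it is quoted verbatim from \cite{bernand2019quantitative} and the paper explicitly notes that shorter proofs were later given by Hirsch--Zemas \cite{hirsch2021note} and Topping \cite{topping2021rigidity}. You correctly recognize this and offer only a sketch, so there is no in-paper proof to compare against.

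Your outline is broadly consistent with the strategy of \cite{bernand2019quantitative}: Bogomolny identity to express $Z$ as the squared $L^2$ deficit of the first-order equation, attainment of the closest Belavin--Polyakov profile by ruling out parameter degeneration, a local spectral-gap estimate for the linearization around $\mathcal B$, and a compactness argument to globalize. In \cite{bernand2019quantitative} the local step is carried out after conformally pulling back to $\Sphere^2$, where the Hessian becomes $\int_{\Sphere^2}(|\nabla v|^2-2|v|^2)\intd\mathcal H^2$ and the spectral gap is read off from spherical harmonics; your linearization of $\mathcal L(\m)=\partial_1\m+\m\times\partial_2\m$ is equivalent because $\int|D\mathcal L_\phi u|^2=\int|\nabla u|^2-\int|u|^2|\nabla\phi|^2$ for tangent $u$.

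One point to tighten: in your globalization step, the appeal to ``the bubbling analysis of Brezis--Coron'' for the implication $Z(\m_n)\to 0\Rightarrow D(\m_n)\to 0$ is really the \emph{qualitative} rigidity statement, which is not literally contained in \cite{brezis1983large} (that paper treats the $H$-system) and needs its own argument---e.g.\ via Struwe-type bubbling for almost-harmonic maps or concentration-compactness. As written, this step is close to assuming what you want to prove. The alternative proofs in \cite{hirsch2021note,topping2021rigidity} avoid this two-step local/global split altogether and may be worth consulting for a cleaner route.
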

Shorter, alternative proofs of this statement have later been provided
by Hirsch and Zemas \cite{hirsch2021note} and Topping
\cite{topping2021rigidity}.

In order to identify the Belavin-Polyakov profiles corresponding to
minimizers of $\mathcal{E}_\kappa$ in the limit $\kappa \to 0$,
we turn to computing the $\Gamma$-limit in a suitable topology
retaining the location, the radius, the global rotation and the
skyrmion tail.  To this end, we have to identify the correct higher
order $\Gamma$-expansion of the energy.  By roughly minimizing
over the above quantities, we will find in Lemma
\ref{lemma:construction} below that there exists a constant $C>0$
depending only on $\Omega$ such that
\begin{align}
	\inf_{\mathcal{A}} \mathcal{E}_\kappa \leq 8\pi - C\kappa^2.
\end{align}
This suggests to seek a $\Gamma$-limit of the functional
$\frac{\mathcal{E}_\kappa - 8\pi}{\kappa^2}$.

However, in order to rule out some behaviors of finite energy
sequences that minimizers will not exhibit, such as skyrmions
  shrinking too fast or their centers approaching the boundary of
  $\Omega$, we will restrict our attention to magnetizations whose
energy is sufficiently low, i.e. we restrict the admissible set
to
\begin{align}\label{def:admissible_restricted}
  \mathcal{A}_\kappa= \left\{ m\in \mathcal{A}:  \mathcal{E}_\kappa(m)
  - 8\pi <0 \right\}. 
\end{align}
Furthermore, in the $\Gamma$-limit we will only consider
magnetizations $m_\kappa \in \mathcal{A}_\kappa$ which satisfy
\begin{align}
	\liminf_{\kappa \to 0} \frac{\mathcal{E}_\kappa (m_\kappa) -
  8\pi}{\kappa^2} < 0 
\end{align}
Note that this corresponds to a finite energy sequence for the
functional $\frac{\kappa^2}{|\mathcal{E}_\kappa - 8\pi|}$ defined on
$\mathcal{A}_\kappa$.

To specify the topology for the $\Gamma$-limit, given
$m\in \mathcal{A}_\kappa$ we choose
$\phi_m(x):= R \Phi (\rho^{-1}(x-a))$ for $R \in SO(3)$,
$\rho >0$ and $a \in \R^2$ to minimize the Dirichlet distance to
$m$ after extension to $\R^2$ by $-e_3$.  In addition, we will also
consider the tail of the skyrmion $w_m := m+e_3 - \phi_m - Re_3$.
Guessing from the construction of Lemma \ref{lemma:construction}, we
expect $\rho \sim \kappa$ and
$\|\nabla w_m \|_{L^2(\R^2)} \sim \rho$.  It turns out that the
information $m= -e_3$ in $\R^2\setminus \Omega$ will translate into an
asymptotic expression for $ \rho^{-1} w_m$ outside $\Omega$, see
Lemma \ref{lemma:skyrmion_tail}.  This motivates the following:

\begin{definition}\label{def:topology}
  Let
\begin{align}\label{def:A0tilde}
  \begin{split}
    \widetilde{\mathcal{A}_0}& := \left\{ R_0 \in SO(3): R_0 e_3=e_3
    \right\} \times (0,\infty)\times \Omega.
  \end{split}
\end{align}
We then say that a sequence $m_{\kappa_n} \in \mathcal{A}_{\kappa_n}$
BP-converges to $(R_0,r_0, a_0) \in \widetilde{\mathcal{A}_0}$ as
$\kappa_n \to 0$ if and only if the following holds: There exist
$R_n \in SO(3)$, $\rho_n >0$, $a_n \in \Omega$ such that for
$\phi_n := R_n \Phi(\rho_n^{-1}(\bullet - a_n)) \in \mathcal{B}$
we have
\begin{align}
  \limsup_{n \to \infty } \kappa_n^{-2}\int_{\R^2} |\nabla
  (m_{\kappa_n} - \phi_n )|^2 \intd x
  &  <
    \infty,\label{eq:m_close_to_phi}\\ 
  R_0 & = \lim_{n \to \infty } R_n,\\
  r_0 & =  \lim_{n \to \infty } \frac{\rho_n}{\kappa_n}, \label{eq:radius_kappa}\\
  a_0 & =  \lim_{n \to \infty }  a_n.
\end{align} 
\end{definition}

\begin{remark}\label{rem:unique}
  By the first condition and the triangle inequality in
  $\mathring H^1(\R^2)$, one can see that BP-limits are unique.
\end{remark}

We are now in a position to give the $\Gamma$-limit of
$\frac{\mathcal{E}_\kappa - 8\pi}{\kappa^2}$ with respect to the above
convergence.

\begin{definition}
  \label{def:T}
  For $(R_0,r_0, a_0 ) \in \widetilde{\mathcal{A}_0}$ let 
  \begin{align}
    \label{eq:E0}
          \mathcal{E}_0 (R_0,r_0, a_0) := r_0^2 T( a_0) - 2 r_0
          \int_{\R^2} (R_0\Phi)' \cdot \nabla \Phi_3 \intd x, 
	\end{align}
	where the Dirichlet contribution of the tail correction is
	\begin{align}
          \label{eq:Ta0}
		T( a_0):=\inf\left\{\int_{\R^2} | \nabla u |^2 \intd x
          :  u(x) =2 \frac{x-a_0}{\abs{x-a_0}^2}\text{ in }
          \R^2\setminus\Omega\right\}. 
	\end{align}
	We furthermore define a restricted admissible set
	\begin{align}\label{def:A0}
          \mathcal{A}_0:= \left\{ (R_0,r_0, a_0) \in
          \widetilde{\mathcal{A}_0}: \mathcal{E}_0(R_0,r_0,
          a_0)<0\right\}. 
	\end{align}
\end{definition}\label{def:limit}
	
We can then state the $\Gamma$-convergence.
	
\begin{theorem}\label{thm:convergence}
  The $\Gamma$-limit as $\kappa\to 0$ of the functionals
  $\frac{\mathcal{E}_\kappa - 8\pi}{\kappa^2}$ restricted to
  $\mathcal{A}_\kappa$ with respect to the BP-convergence is given by
  $\mathcal{E}_0$ restricted to $\mathcal{A}_0$ in the sense that we
  have the following:
	\begin{enumerate}[(i)]
        \item For every sequence of $\kappa_n \to 0$ and
          $m_{\kappa_n} \in \mathcal{A}_{\kappa_n}$ with
          $\liminf_{n\to \infty }
          \frac{\mathcal{E}_{\kappa_n}(m_{\kappa_n}) -
            8\pi}{\kappa_n^2}<0$ there exists a subsequence (not
          relabeled) and $(R_0,r_0, a_0 ) \in \mathcal{A}_0$ such that
          $m_{\kappa_n}$ BP-converges to $(R_0,r_0, a_0)$.
        \item Let $\kappa_n \to 0$, let
          $m_{\kappa_n} \in \mathcal{A}_{\kappa_n}$ BP-converge to
          $(R_0,r_0, a_0 ) \in \mathcal{A}_0$ and let
		\begin{align}
                  \liminf_{n\to
                  \infty}\frac{\mathcal{E}_{\kappa_n}(m_{\kappa_n}) -
                  8\pi}{\kappa_n^2} < 0. 
		\end{align}
		Then we have
		\begin{align}
			\liminf_{n\to
                  \infty}\frac{\mathcal{E}_{\kappa_n}(m_{\kappa_n}) -
                  8\pi}{\kappa_n^2}  \geq \mathcal{E}_0 (R_0,r_0, a_0
                  ). 
		\end{align}
              \item For every $(R_0,r_0, a_0) \in \mathcal{A}_0$ and
                every sequence of $\kappa_n \to 0$ there
                exist
                $m_{\kappa_n} \in \mathcal{A}_{\kappa_n}$
                BP-converging to $(R_0,r_0, a_0)$ such that
		\begin{align}
                  \limsup_{n\to \infty}
                  \frac{\mathcal{E}_{\kappa_n}(m_{\kappa_n}) -
                  8\pi}{\kappa_n^2}  \leq \mathcal{E}_0 (R_0,r_0, a_0). 
		\end{align}
	\end{enumerate}
\end{theorem}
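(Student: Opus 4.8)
The plan is to establish the three assertions using the quantitative rigidity of Theorem~\ref{thm:rigidity} as the only source of compactness, together with two structural facts. The first is an exact identity for the Dirichlet excess: writing $\phi_m = R\Phi(\rho^{-1}(\bullet-a))$ for a Dirichlet-closest Belavin--Polyakov profile (whose existence and near-minimality are part of Theorem~\ref{thm:rigidity}) and $w_m = m + e_3 - \phi_m - Re_3$ for the tail, the harmonic-map equation $-\Delta\phi_m = \abs{\nabla\phi_m}^2\phi_m$, the identity $\int_{\R^2}\abs{\nabla\phi_m}^2\phi_m\intd x = 0$, and $|m| = |\phi_m| = 1$ give, after integration by parts,
\begin{align*}
  Z(m) = \int_{\R^2}\abs{\nabla w_m}^2\intd x - \int_{\R^2}\abs{\nabla\phi_m}^2\,\abs{m-\phi_m}^2\intd x ,
\end{align*}
so that $\eta D^2(m;\mathcal B)\le Z(m)\le D^2(m;\mathcal B) = \int_{\R^2}\abs{\nabla w_m}^2\intd x$. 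The second is that, after rescaling $x = a+\rho y$, the DMI term is linear in the radius to leading order, $\int_\Omega m'\cdot\nabla m_3\intd x = \rho\int_{\R^2}(R\Phi)'\cdot\nabla(R\Phi)_3\intd x + (\text{cross and tail terms})$, where $R\mapsto\int_{\R^2}(R\Phi)'\cdot\nabla(R\Phi)_3\intd x$ is bounded and continuous on $SO(3)$ and reduces to $\int_{\R^2}(R\Phi)'\cdot\nabla\Phi_3\intd x$ once $Re_3 = e_3$.

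For compactness~(i) I would first extract non-optimal information: from $\mathcal E_{\kappa_n}(m_{\kappa_n})<8\pi$, the bound $\abs{\int_\Omega m'\cdot\nabla m_3}\le\abs\Omega^{1/2}\norm{\nabla m}_{L^2}$ and $Z\ge 0$ one gets $Z(m_{\kappa_n})\lesssim\kappa_n$, hence $D(m_{\kappa_n};\mathcal B)\to 0$ by rigidity; in particular $\rho_n\to 0$, since a positive limit radius would, passing to a weak $\mathring H^1(\R^2)$-limit, contradict the real-analyticity of Belavin--Polyakov profiles together with $m = -e_3$ on $\R^2\setminus\Omega$. A bootstrap using the two structural facts then yields the sharp scalings: writing $c_n = \operatorname{dist}(a_n,\partial\Omega)$, the estimate $\norm{\nabla\phi_n}_{L^2(\R^2\setminus\Omega)}^2\asymp(\rho_n/c_n)^2$ combined with $D^2\lesssim Z\le 2\kappa_n\int m'\cdot\nabla m_3\lesssim\kappa_n\rho_n$ and the lower bound of Lemma~\ref{lemma:construction} forces $c_n\gtrsim 1$, $\rho_n\asymp\kappa_n$, and $\norm{\nabla w_{\kappa_n}}_{L^2} = O(\kappa_n)$; the hypothesis $\liminf\kappa_n^{-2}(\mathcal E_{\kappa_n}-8\pi)<0$ additionally precludes $\rho_n = o(\kappa_n)$, so $r_0 := \lim\rho_n/\kappa_n\in(0,\infty)$. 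Finally, near $a_n$ one has $w_{\kappa_n}\approx e_3-R_ne_3$ up to an $H^1$-small error while $w_{\kappa_n}\to 0$ at infinity, so the logarithmic cost of this transition bounds $\abs{e_3-R_ne_3}^2/\log(1/\rho_n)$ by $\norm{\nabla w_{\kappa_n}}_{L^2}^2 = O(\kappa_n^2)$, whence $R_ne_3\to e_3$. Passing to a subsequence gives BP-convergence to some $(R_0,r_0,a_0)$ with $R_0e_3 = e_3$ and $a_0\in\Omega$; membership $(R_0,r_0,a_0)\in\mathcal A_0$ then follows from~(ii).

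For the lower bound~(ii) I would use the exact identity together with the tail asymptotics of Lemma~\ref{lemma:skyrmion_tail}. On $\R^2\setminus\Omega$ the tail is pinned, $w_{\kappa_n} = -R_n(\Phi(\rho_n^{-1}(\bullet-a_n))+e_3)$, whose gradient equals $\rho_n\nabla u_{a_0}$ up to $o(\rho_n)$ in $L^2(\R^2\setminus\Omega)$, where $u_{a_0}(x) = 2(x-a_0)\abs{x-a_0}^{-2}$ (using $a_0\in\Omega$, $R_0e_3 = e_3$, and continuity of $a\mapsto\norm{\nabla u_a}_{L^2(\R^2\setminus\Omega)}$), while the trace of $\rho_n^{-1}w_{\kappa_n}$ on $\partial\Omega$ converges to $(Q_0 u_{a_0},0)$ with $Q_0\in SO(2)$ the planar part of $R_0$. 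Since the harmonic extension minimizes the Dirichlet energy in $\Omega$ for given boundary values, this yields $\norm{\nabla w_{\kappa_n}}_{L^2(\R^2)}^2\ge\rho_n^2 T(a_0) + o(\rho_n^2)$ (equivalently, pass to a weak $\mathring H^1$-limit of $\rho_n^{-1}w_{\kappa_n}$ and use lower semicontinuity), hence $\liminf\kappa_n^{-2}\norm{\nabla w_{\kappa_n}}_{L^2}^2\ge r_0^2 T(a_0)$. It then remains to show that the correction term $\int\abs{\nabla\phi_n}^2\abs{m_{\kappa_n}-\phi_n}^2\intd x$ and the DMI cross- and tail-terms are $o(\kappa_n^2)$ and $o(\kappa_n)$, respectively: the decisive points are that $\abs{\nabla\phi_n}^2$, $\nabla\cdot\phi_n'$, etc.\ concentrate at the scale $\rho_n\asymp\kappa_n$ near $a_n$, where $m_{\kappa_n}-\phi_n$ is correspondingly small after rescaling, and that the out-of-plane part $w_{\kappa_n,3}$ there is quadratically small (of size $\abs{e_3-R_ne_3}^2$). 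Combining, $\kappa_n^{-2}(\mathcal E_{\kappa_n}(m_{\kappa_n})-8\pi) = \kappa_n^{-2}Z(m_{\kappa_n}) - 2(\rho_n/\kappa_n)\int(R_n\Phi)'\cdot\nabla(R_n\Phi)_3\intd x + o(1)$ has liminf at least $r_0^2 T(a_0) - 2 r_0\int(R_0\Phi)'\cdot\nabla\Phi_3\intd x = \mathcal E_0(R_0,r_0,a_0)$. This multiscale decoupling of the core from its boundary-enforced tail — absent in the Ginzburg--Landau setting, where the core energy diverges — is the part I expect to be the main obstacle, since the tail correction $w_m$ interacts with the core at the very order $\kappa^2$ at which the renormalized energy lives.

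For the upper bound~(iii), given $(R_0,r_0,a_0)\in\mathcal A_0$ I would set $\rho_n = r_0\kappa_n$, $\phi_n = R_0\Phi(\rho_n^{-1}(\bullet-a_0))$, fix a scale $\rho_n\ll\Lambda_n\to 0$, let $u_*$ realize $T(a_0)$ (i.e.\ $u_* = u_{a_0}$ on $\R^2\setminus\Omega$ and harmonic in $\Omega$), and define $m_{\kappa_n} = \phi_n$ on $B_{\Lambda_n}(a_0)$; on $\Omega\setminus B_{\Lambda_n}(a_0)$ set $m_{\kappa_n}' = \phi_n' + h_n$ with $h_n$ harmonic, $h_n = 0$ on $\partial B_{\Lambda_n}(a_0)$ and $h_n = -\phi_n'$ on $\partial\Omega$ (so $\abs{m_{\kappa_n}'}\le\abs{\phi_n'}+\abs{h_n}\lesssim\rho_n/\Lambda_n<1$ there) and $(m_{\kappa_n})_3 = -\sqrt{1-\abs{m_{\kappa_n}'}^2}$; and $m_{\kappa_n} = -e_3$ on $\R^2\setminus\Omega$. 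Then $m_{\kappa_n}\in\mathcal A$, it BP-converges to $(R_0,r_0,a_0)$ because $\rho_n^{-1}h_n\to Q_0 u_*$ in $\Omega$, and a direct expansion of $\mathcal E_{\kappa_n}(m_{\kappa_n})$ gives the claim: the core contributes $8\pi - 2 r_0\kappa_n^2\int(R_0\Phi)'\cdot\nabla\Phi_3\intd x + o(\kappa_n^2)$; the cross term $2\int\nabla\phi_n'\cdot\nabla h_n$, evaluated by integration by parts using that $\phi_n'$ is the approximate dipole $-\rho_n Q_0 u_{a_0}$ and $\int_{\partial\Omega}\partial_\nu u_{a_0}\cdot u_{a_0} = -\norm{\nabla u_{a_0}}_{L^2(\R^2\setminus\Omega)}^2$, flips the sign of the boundary part so that the total Dirichlet contribution becomes $\rho_n^2\big(\norm{\nabla u_{a_0}}_{L^2(\R^2\setminus\Omega)}^2 + \norm{\nabla u_*}_{L^2(\Omega)}^2\big) = r_0^2\kappa_n^2 T(a_0)$; and the out-of-plane and remaining errors are $o(\kappa_n^2)$ for a suitable $\Lambda_n$. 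Hence $\limsup\kappa_n^{-2}(\mathcal E_{\kappa_n}(m_{\kappa_n})-8\pi)\le\mathcal E_0(R_0,r_0,a_0)<0$, so in particular $m_{\kappa_n}\in\mathcal A_{\kappa_n}$ for $n$ large.
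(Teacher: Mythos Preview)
Your overall architecture matches the paper closely: compactness via the rigidity estimate and a bootstrap on $(\rho_n,\operatorname{dist}(a_n,\partial\Omega))$, the lower bound via the identity $Z(m)=\int_{\R^2}\abs{\nabla w_m}^2\intd x-\int_{\R^2}\abs{\nabla\phi_m}^2\abs{m-\phi_m}^2\intd x$, and the recovery sequence via the harmonic tail minimizer. However, there is a genuine gap in your treatment of~(ii).

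You claim the correction term $\int_{\R^2}\abs{\nabla\phi_n}^2\abs{m_{\kappa_n}-\phi_n}^2\intd x$ is $o(\kappa_n^2)$. This is false in general. Pulling back by the conformal map $\phi_n$ gives $\int_{\R^2}\abs{\nabla\phi_n}^2\abs{m_{\kappa_n}-\phi_n}^2\intd x=\rho_n^2\int_{\Sphere^2}\abs{v_n}^2\intd\mathcal H^2$ with $v_n=\rho_n^{-1}(m_{\kappa_n}\circ\phi_n^{-1}-\mathrm{id}_{\Sphere^2})$. The hypothesis \eqref{eq:m_close_to_phi} only gives $v_n$ bounded in $H^1(\Sphere^2)$, so along a subsequence $v_n\rightharpoonup v_0$ weakly in $H^1$ and strongly in $L^2$, with $v_0$ a tangent field on $\Sphere^2$; there is no reason for $v_0$ to vanish. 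The correction term is therefore of the \emph{same} order $\kappa_n^2$ as the target $r_0^2T(a_0)\kappa_n^2$, and simply discarding it destroys the lower bound. Your remark that ``$m_{\kappa_n}-\phi_n$ is correspondingly small after rescaling'' is exactly the statement that $v_n$ is bounded, not that it vanishes.

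The paper's remedy is not to estimate the correction term separately but to pair it with the \emph{core} part of $\int\abs{\nabla w_n}^2$. Splitting at radius $\sqrt{\rho_n}$ and transforming the inner integral to $\Sphere^2$ yields
\[
  \kappa_n^{-2}Z(m_{\kappa_n})=\frac{\rho_n^2}{\kappa_n^2}\int_{\R^2\setminus B_{\sqrt{\rho_n}}(a_n)}\bigl|\nabla(\rho_n^{-1}w_n)\bigr|^2\intd x+\frac{\rho_n^2}{\kappa_n^2}\Bigl(\int_{U_n}\abs{\nabla v_n}^2-2\int_{\Sphere^2}\abs{v_n}^2\Bigr)\intd\mathcal H^2.
\]
The first term produces $r_0^2T(a_0)$ in the limit by your weak-convergence argument; the second converges to $r_0^2\bigl(\int_{\Sphere^2}\abs{\nabla v_0}^2-2\int_{\Sphere^2}\abs{v_0}^2\bigr)\intd\mathcal H^2$, which is precisely the second variation of the Dirichlet energy on $\Sphere^2$ at a minimizing harmonic map, evaluated on the tangent field $v_0$, and is therefore non-negative. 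This is the mechanism by which the core decouples from the tail at order $\kappa^2$; without it the liminf inequality does not close. You correctly flagged this decoupling as the main obstacle, but your proposed resolution does not work.

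A smaller point on~(i): your argument for $R_ne_3\to e_3$ via ``the logarithmic cost of the transition'' presupposes that $m_{\kappa_n}-\phi_n$ is small near $a_n$ in some averaged sense, which $\norm{\nabla(m_{\kappa_n}-\phi_n)}_{L^2}=O(\kappa_n)$ alone does not provide in two dimensions. The paper instead invokes a Moser--Trudinger-type inequality from \cite{bernand2019quantitative}, namely $\int_{\R^2}\exp\bigl(c\,\abs{m_{\kappa_n}-\phi_n}^2/\kappa_n^2\bigr)\abs{\nabla\phi_n}^2\intd x\le C$, applied on $\R^2\setminus B_{\operatorname{diam}(\Omega)}(a_n)$ where $\abs{m_{\kappa_n}-\phi_n}\approx\abs{e_3-R_ne_3}$ and $\int\abs{\nabla\phi_n}^2\sim\rho_n^2$; this yields $\abs{e_3-R_ne_3}^2\lesssim\kappa_n^2\abs{\log\rho_n}\to 0$. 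Your capacity heuristic can likely be made rigorous, but it needs a comparable ingredient (e.g.\ the weighted Poincar\'e inequality on $\Sphere^2$) to pin down the value of $m_{\kappa_n}-\phi_n$ at the core scale.
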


\begin{remark}
  The above version of $\Gamma$-convergence is equivalent to the usual
  notion for the functionals
  $\frac{\kappa^2}{|\mathcal{E}_\kappa - 8\pi|}$ and
  $|\mathcal{E}_0|^{-1}$ restricted to $\mathcal{A}_\kappa$ and
    $\mathcal{A}_0$, respectively.
\end{remark}

Notice that the last term in the definition of $\mathcal E_0$ is
clearly minimized by $R_0 = \mathrm{id}$ among all $R_0$ satisfying
$R_0 e_3 = e_3$, since this achieves an absolute maximum of the
integrand by pointwise Cauchy-Schwarz inequality in view of the fact
that $\Phi'$ is collinear to $\nabla \Phi_3$. Thus from the fact that
$\int_{\R^2} \Phi' \cdot \nabla \Phi_3 \intd x = 4 \pi$, see
\cite[Lemma A.5]{bernand2019quantitative}, we have
  \begin{align}
    \label{eq:E0lower}
    \mathcal E_0(R_0, r_0, a_0) \geq \mathcal E_0(\mathrm{id}, r_0,
    a_0) =  T(a_0) \left( r_0 - {4 \pi \over T(a_0)} \right)^2  -{16
    \pi^2 \over T(a_0)}.   
  \end{align}
  Upon minimizing $\mathcal E_0$ over $\widetilde{\mathcal A_0}$, we
  can saturate the lower bound in \eqref{eq:E0lower} and obtain the
  following characterization of the minimizers of $\mathcal
  E_\kappa$.

\begin{theorem}\label{thm:minimizers}
  Let $\kappa_n \to 0$ as $n \to \infty$ and let $m_{\kappa_n}$ be
  minimizers of $\mathcal{E}_{\kappa_n}$ over $\mathcal A$.  Then
  there exists a subsequence (not relabeled) and
  $ a_0 \in \operatorname{arg min}_{a\in \Omega} T(a)$ such that with
	\begin{align}
          r_0& := \frac{4\pi}{T( a_0)},\\
          R_0 & := \operatorname{id}
	\end{align}
	we get for
        $\phi_n := \Phi\left( \bullet \ - \ a_0 \over r_0 \kappa_n
        \right) \in \mathcal{B}$ and all $n \in \mathbb{N}$ that
	\begin{align}\label{eq:rate_of_convergence}
          \int_{\R^2} |\nabla (m_{\kappa_n} - \phi_n)|^2 \intd x \leq
          C \kappa_n^2  
	\end{align}
	and
	\begin{align}
          \lim_{n \to \infty} \frac{\mathcal E_{\kappa_n} (m_{\kappa_n})
          - 8 \pi}{
          \kappa_n^2}
          = -  \frac{16\pi^2}{T(a_0)}. 
	\end{align}
      \end{theorem}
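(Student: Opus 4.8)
The plan is the standard one: use the $\Gamma$-convergence of Theorem~\ref{thm:convergence} together with the explicit lower bound \eqref{eq:E0lower} to force minimizers of $\mathcal E_{\kappa_n}$ to converge to a minimizer of $\mathcal E_0$ over $\mathcal A_0$, and then upgrade the resulting qualitative BP-convergence to the rate \eqref{eq:rate_of_convergence} via the rigidity estimate of Theorem~\ref{thm:rigidity}. (For $n$ large the minimizers $m_{\kappa_n}$ exist by Theorem~\ref{prop:existence}.) The first point is to show that $\mathcal E_0$ attains its minimum over $\mathcal A_0$ and to identify the minimizers. Using, exactly as in the derivation of \eqref{eq:E0lower}, that $\int_{\R^2}\Phi'\cdot\nabla\Phi_3\intd x=4\pi$ and that $\Phi'$ is pointwise parallel to $\nabla\Phi_3$ (so $\int_{\R^2}(R_0\Phi)'\cdot\nabla\Phi_3\intd x=4\pi\cos\theta$, with $\theta$ the angle of the planar rotation of $R_0$, which is $<4\pi$ unless $\theta=0$), one gets $\mathcal E_0(R_0,r_0,a)\ge T(a)\,(r_0-4\pi/T(a))^2-16\pi^2/T(a)\ge-16\pi^2/T(a)$, with equality iff $R_0=\mathrm{id}$ and $r_0=4\pi/T(a)$. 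Moreover $T\colon\Omega\to(0,\infty)$ is lower semicontinuous, bounded below away from $0$, and $T(a)\to\infty$ as $a\to\partial\Omega$, since any admissible $u$ satisfies $\int_{\R^2}|\nabla u|^2\intd x\ge\int_{\R^2\setminus\Omega}\bigl|\nabla\bigl(2(x-a)/|x-a|^2\bigr)\bigr|^2\intd x$ and the right-hand side diverges like $\operatorname{dist}(a,\partial\Omega)^{-2}$ as the singularity at $a$ approaches $\R^2\setminus\Omega$. Hence $T$ attains its minimum at some $a_0\in\Omega$, $\mathcal A_0\ni(\mathrm{id},4\pi/T(a_0),a_0)$ is nonempty, and $\min_{\mathcal A_0}\mathcal E_0=-16\pi^2/\min_{a\in\Omega}T(a)<0$, attained exactly on $\{(\mathrm{id},4\pi/T(a_0),a_0):a_0\in\operatorname{arg min}_{a\in\Omega}T(a)\}$.

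Next, plugging any such minimizer into Theorem~\ref{thm:convergence}(iii) (or invoking Lemma~\ref{lemma:construction} directly) yields competitors $\widetilde m_{\kappa_n}\in\mathcal A_{\kappa_n}$ with $\limsup_n\kappa_n^{-2}(\mathcal E_{\kappa_n}(\widetilde m_{\kappa_n})-8\pi)\le-16\pi^2/\min_{a\in\Omega}T(a)$, so by minimality of $m_{\kappa_n}$ the same upper bound holds for $m_{\kappa_n}$; in particular $m_{\kappa_n}\in\mathcal A_{\kappa_n}$ for $n$ large and $\liminf_n\kappa_n^{-2}(\mathcal E_{\kappa_n}(m_{\kappa_n})-8\pi)<0$. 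By Theorem~\ref{thm:convergence}(i), after passing to a subsequence, $m_{\kappa_n}$ BP-converges to some $(R_0,r_0,a_0)\in\mathcal A_0$, uniquely so along the subsequence by Remark~\ref{rem:unique}. Combining the upper bound with Theorem~\ref{thm:convergence}(ii) and \eqref{eq:E0lower},
\begin{align*}
-\frac{16\pi^2}{\min_{a\in\Omega}T(a)}\ \ge\ &\limsup_n\frac{\mathcal E_{\kappa_n}(m_{\kappa_n})-8\pi}{\kappa_n^2}\ \ge\ \liminf_n\frac{\mathcal E_{\kappa_n}(m_{\kappa_n})-8\pi}{\kappa_n^2}\\
&\ge\ \mathcal E_0(R_0,r_0,a_0)\ \ge\ -\frac{16\pi^2}{T(a_0)}\ \ge\ -\frac{16\pi^2}{\min_{a\in\Omega}T(a)},
\end{align*}
so every inequality is an equality. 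Reading these off: the full limit $\lim_n\kappa_n^{-2}(\mathcal E_{\kappa_n}(m_{\kappa_n})-8\pi)$ exists and equals $-16\pi^2/T(a_0)$; $T(a_0)=\min_{a\in\Omega}T(a)$, i.e.\ $a_0\in\operatorname{arg min}_{a\in\Omega}T(a)$; and $\mathcal E_0(R_0,r_0,a_0)=-16\pi^2/T(a_0)$ forces equality in \eqref{eq:E0lower}, hence $R_0=\mathrm{id}$ and $r_0=4\pi/T(a_0)$. This gives all assertions of the theorem except \eqref{eq:rate_of_convergence}.

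For the rate, set $\phi_n:=\Phi\bigl((\bullet-a_0)/(r_0\kappa_n)\bigr)\in\mathcal B$ and let $\phi_{m_{\kappa_n}}=R_n^*\Phi\bigl(\rho_n^{*-1}(\bullet-a_n^*)\bigr)$ be the closest Belavin--Polyakov profile to $m_{\kappa_n}$ (Theorem~\ref{thm:rigidity}). BP-convergence already gives $\|\nabla(m_{\kappa_n}-\phi_{m_{\kappa_n}})\|_{L^2(\R^2)}^2=D^2(m_{\kappa_n};\mathcal B)=O(\kappa_n^2)$, so by the triangle inequality in $\mathring H^1(\R^2)$ it suffices to show $\|\nabla(\phi_{m_{\kappa_n}}-\phi_n)\|_{L^2(\R^2)}^2=O(\kappa_n^2)$. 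A direct computation using the decay of $\Phi$ bounds this $\mathring H^1$-distance between two Belavin--Polyakov profiles of comparable radius $\sim\kappa_n$ by $C\bigl(|R_n^*-\mathrm{id}|^2+|\rho_n^*/\kappa_n-r_0|^2+|a_n^*-a_0|^2/\kappa_n^2\bigr)$, reducing the task to the rate estimates $|R_n^*-\mathrm{id}|=O(\kappa_n)$, $|\rho_n^*-r_0\kappa_n|=O(\kappa_n^2)$, $|a_n^*-a_0|=O(\kappa_n^2)$. These I would extract from a quantitative refinement of the lower bound of Theorem~\ref{thm:convergence}(ii): decomposing $m_{\kappa_n}=\phi_{m_{\kappa_n}}+R_n^*e_3-e_3+w_{m_{\kappa_n}}$, with the tail $w_{m_{\kappa_n}}$ of size $O(\kappa_n)$ in $\mathring H^1$ and pinned outside $\Omega$ by Lemma~\ref{lemma:skyrmion_tail}, one expands $\mathcal E_{\kappa_n}(m_{\kappa_n})$ to second order in the small quantities and seeks to replace the inequality of Theorem~\ref{thm:convergence}(ii) by the coercive one $\kappa_n^{-2}(\mathcal E_{\kappa_n}(m_{\kappa_n})-8\pi)\ge-16\pi^2/T(a_0)+c\bigl(|R_n^*-\mathrm{id}|^2+|\rho_n^*/\kappa_n-r_0|^2+|a_n^*-a_0|^2\bigr)-o(1)$, exploiting the strict minimality of $(\mathrm{id},r_0,a_0)$ for $\mathcal E_0$; comparison with a sufficiently sharp recovery sequence then forces the stated rates and hence \eqref{eq:rate_of_convergence}.

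The hard part will be precisely this last step: obtaining the $O(\kappa_n^2)$ rate for the skyrmion center $a_n^*$, which is strictly finer than what qualitative BP-convergence provides. It needs both the coercivity of $\mathcal E_0$ in its parameters near the minimizing profile and a sufficiently accurate second-order expansion of $\mathcal E_{\kappa_n}$ about it — in particular the core/tail interaction terms in that expansion must be shown not to corrupt the leading behavior encoded in $\mathcal E_0$ — and it relies on a nondegeneracy-type property of the minimizer(s) of the tail functional $T$, which has to be established or otherwise accounted for in the argument.
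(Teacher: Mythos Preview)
Your argument for identifying the minimizers of $\mathcal E_0$ and the $\Gamma$-convergence sandwich for the convergence of energies is correct and is exactly what the paper does (with more detail on your part, e.g.\ the existence of $\min T$).

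Where you diverge sharply is the rate estimate \eqref{eq:rate_of_convergence}. The paper's proof is a single sentence here: it simply says that, by the properties of $\Gamma$-convergence, minimizers BP-converge to $(R_0,r_0,a_0)$ ``with the rate'' \eqref{eq:rate_of_convergence}, with $\phi_\kappa$ built from the \emph{limit} parameters. In other words, the paper treats \eqref{eq:rate_of_convergence} as part of what BP-convergence delivers --- essentially reading off condition \eqref{eq:m_close_to_phi} of Definition~\ref{def:topology}. You are right that, strictly speaking, \eqref{eq:m_close_to_phi} only gives the rate for \emph{some} sequence $(R_n,\rho_n,a_n)\to(R_0,r_0,a_0)$, not for the constant choice; the paper does not address this distinction, and your elaborate second-order program is not what the paper does.

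More importantly, your proposed fix does not work. Even if you could prove the coercive lower bound
\[
\kappa_n^{-2}\bigl(\mathcal E_{\kappa_n}(m_{\kappa_n})-8\pi\bigr)\ \ge\ -\frac{16\pi^2}{T(a_0)}+c\bigl(|R_n^*-\mathrm{id}|^2+|\rho_n^*/\kappa_n-r_0|^2+|a_n^*-a_0|^2\bigr)-o(1),
\]
which already presupposes a nondegenerate minimum of $T$ (nowhere assumed), comparing with the matching upper bound only yields that the bracketed quantity is $o(1)$. This gives $|a_n^*-a_0|=o(1)$, not the $O(\kappa_n^2)$ you need: after rescaling by $\rho_n^*\sim\kappa_n$, the centers of $\phi_{m_{\kappa_n}}$ and $\phi_n$ differ by $|a_n^*-a_0|/\kappa_n$, and you need this to be $O(\kappa_n)$ for the $\mathring H^1$ distance to be $O(\kappa_n)$. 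Obtaining $|a_n^*-a_0|=O(\kappa_n^2)$ would require the next order of the $\Gamma$-expansion, well beyond anything in the paper. So drop this route: either read \eqref{eq:rate_of_convergence} as the paper does --- i.e.\ as the immediate consequence \eqref{eq:m_close_to_phi} of BP-convergence with the profile coming from Definition~\ref{def:topology} --- or acknowledge that the fixed-parameter version as literally stated is not established by the paper's argument.
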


      \noindent In particular, this theorem says that as
      $\kappa_n \to 0$ the appropriately translated and dilated
      minimizer $m_{\kappa_n}( r_0 \kappa_n( \bullet ) + a_0 )$
      converges to the canonical Belavin-Polyakov profile $\Phi$ in
      Dirichlet distance, up to a subsequence. In the original
      variables, the energy minimizing profile is, therefore, close to
      the Belavin-Polyakov profile of N\'eel type centered at $a_0$
      and with the small radius $\rho_n = r_0\kappa_n$.

\subsection{Explicit minimizers for specific domains}

We next give several examples of geometries, in which an explicit
minimizer of the limit problem may be obtained. We use the standard
identification of the complex plane with $\mathbb R^2$ and write
$z \in \mathbb C$ to denote a vector in the plane.  The symbol
$\bar z$ denotes the complex conjugate of $z$.  We also introduce the
Wirtinger derivatives
$\partial_z = \frac12 (\partial_x - i \partial_y)$ and
$\partial_{\bar z} = \frac12 (\partial_x + i \partial_y)$, acting on
$u : \mathbb C \to \mathbb C$.

Clearly, the infimum in \eqref{eq:Ta0} is attained by the unique
  harmonic extension of $u$ from $\partial \Omega$ into $\Omega$. With
  $u_{z_0} : \mathbb C \to \mathbb C$ solving
  \begin{align}
    \label{eq:ez0}
    \Delta u_{z_0} = 0 \ \text{in} \ \Omega, \qquad
  u_{z_0}(z) = {2 \over \bar z -  
  \bar z_0} \ \text{in} \ \mathbb C \setminus \Omega,
  \end{align}
  one can then write the limit energy associated with a skyrmion
  centered at $z_0 \in \mathbb C$ as
\begin{align}
  \label{eq:Tuz}
  T(z_0) = \int_{\R^2} \nabla \bar u_{z_0} \cdot \nabla u_{z_0} \,
  \intd x.
\end{align}
The following proposition allows us to reduce the computation of
$T(z_0)$ to evaluating a derivative of $u_{z_0}(z)$ at $z = z_0$ for
the considered geometries.
  
\begin{proposition}\label{lemma:representation}
  Let $\Omega \subset \mathbb{C}$ be a simply connected bounded domain
  with a boundary of class $C^{1,\alpha}$, for some
  $\alpha \in (0,1)$.  Then we have
  \begin{align}
    \label{eq:TzW}
    T(z_0) = 8\pi \partial_z u_{z_0}(z_0).
  \end{align}
\end{proposition}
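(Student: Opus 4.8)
The plan is to convert the bulk integral $T(z_0)$ into a boundary integral over $\partial\Omega$, and then to collapse that boundary integral onto an infinitesimal circle around $z_0$, where a direct residue computation will produce $\partial_z u_{z_0}(z_0)$. Write $u:=u_{z_0}$ and $g(z):=\tfrac{2}{\bar z-\bar z_0}$, so that $u$ is harmonic in $\Omega$, $u=g$ on $\mathbb C\setminus\Omega$, and $g$ is anti-holomorphic — hence harmonic — on $\mathbb C\setminus\{z_0\}$. Since the datum $g|_{\partial\Omega}$ is smooth and $\partial\Omega\in C^{1,\alpha}$, Schauder estimates give $u\in C^{1,\beta}(\overline\Omega)$ for some $\beta>0$, which is what legitimizes the boundary manipulations below; note also that $\nabla u\in L^2(\R^2)$, since $|\nabla g|^2=8|z-z_0|^{-4}$ is integrable near infinity and $z_0\in\Omega$.

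The first step is to split $T(z_0)=\int_\Omega|\nabla u|^2\intd x+\int_{\R^2\setminus\Omega}|\nabla g|^2\intd x$ and integrate by parts in each piece: Green's first identity in $\Omega$ (using $\Delta u=0$ and $u=g$ on $\partial\Omega$) and in $B_R\setminus\overline\Omega$ (using $\Delta g=0$ and letting $R\to\infty$, the flux through $\partial B_R$ being $O(R^{-2})$). Combining the two boundary terms on $\partial\Omega$ — with opposite signs coming from the two choices of outward normal — should produce
\begin{align*}
  T(z_0)=\int_{\partial\Omega}\bar g\,\partial_\nu\psi\intd s,\qquad\psi:=(u-g)\big|_{\Omega},
\end{align*}
where $\nu$ is the outward normal of $\Omega$ and $\partial_\nu\psi$ is the one-sided normal derivative from inside $\Omega$. (Equivalently, the distributional Laplacian of $u$ on $\R^2$ is the single-layer distribution $(\partial_\nu u^{\mathrm{int}}-\partial_\nu g)$ along $\partial\Omega$, and $T(z_0)=-\int_{\R^2}\bar u\,\Delta u$.) By construction $\psi$ is harmonic in $\Omega\setminus\{z_0\}$, vanishes on $\partial\Omega$, and near $z_0$ equals $u(z)-\tfrac{2}{\bar z-\bar z_0}$ with $u$ smooth there.

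The second step is to move the contour. Applying Green's second identity to the harmonic functions $\bar g$ and $\psi$ on $\Omega\setminus\overline{B_\epsilon(z_0)}$ and using $\psi=0$ on $\partial\Omega$ yields, for every small $\epsilon>0$,
\begin{align*}
  T(z_0)=\int_{\partial B_\epsilon(z_0)}\bigl(\psi\,\partial_n\bar g-\bar g\,\partial_n\psi\bigr)\intd s,
\end{align*}
with $n$ the unit normal on $\partial B_\epsilon(z_0)$ pointing towards $z_0$. Since the left-hand side does not depend on $\epsilon$, it equals the $\epsilon\to0$ limit of the right-hand side. To evaluate that limit I would parametrize $z=z_0+\epsilon e^{i\theta}$ and substitute $\bar g=\tfrac{2}{\epsilon}e^{-i\theta}$, $\partial_n\bar g=\tfrac{2}{\epsilon^2}e^{-i\theta}$, $\psi=u(z_0+\epsilon e^{i\theta})-\tfrac{2}{\epsilon}e^{i\theta}$, $\partial_n\psi=-(\partial_\rho u)(z_0+\epsilon e^{i\theta})-\tfrac{2}{\epsilon^2}e^{i\theta}$, together with the first-order expansions $u(z_0+\epsilon e^{i\theta})=u(z_0)+\epsilon e^{i\theta}\partial_z u(z_0)+\epsilon e^{-i\theta}\partial_{\bar z}u(z_0)+o(\epsilon)$ and $(\partial_\rho u)(z_0+\epsilon e^{i\theta})=e^{i\theta}\partial_z u(z_0)+e^{-i\theta}\partial_{\bar z}u(z_0)+o(1)$. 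One then expects the divergent $\tfrac{8\pi}{\epsilon^2}$ contributions of the two terms to cancel, the contributions of $u(z_0)$ and of $\partial_{\bar z}u(z_0)$ to integrate to zero by orthogonality of the exponentials over a period, and the surviving $\partial_z u(z_0)$ contributions to add up to $4\pi\partial_z u(z_0)+4\pi\partial_z u(z_0)$, whence $T(z_0)=8\pi\,\partial_z u_{z_0}(z_0)$.

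The delicate part is this last computation: keeping the signs of the one-sided normal derivatives straight, verifying that the singular $O(\epsilon^{-2})$ pieces cancel \emph{exactly} (rather than leaving a finite residue), and making sure the error terms are uniform in $\theta$. The more routine points to check carefully are the justification of the two integrations by parts up to $\partial\Omega$ — this is exactly where the $C^{1,\alpha}$ regularity of the boundary (hence $u\in C^1(\overline\Omega)$) enters — and the decay of $g$ at infinity. Simple connectedness of $\Omega$ does not seem to be needed for this particular statement; it is presumably assumed for consistency with the explicit computations in the subsequent examples.
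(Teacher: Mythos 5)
Your proposal is correct, and its second half takes a genuinely different route from the paper. The first step coincides: the paper also integrates by parts in $\Omega$ and in the exterior to reduce $T(z_0)$ to the boundary integral $\int_{\partial \Omega} \bar u_{z_0}\bigl( \partial_\nu u_{z_0}|_{\Omega} - \partial_\nu u_{z_0}|_{\Omega^c}\bigr) \intd \mathcal H^1$, which is exactly your $\int_{\partial\Omega} \bar g\, \partial_\nu \psi \intd s$, and it invokes the same $C^{1,\alpha}$ Schauder regularity to justify this. From there the paper uses simple connectedness to decompose $u_{z_0} = f + \overline{g}$ with $f,g$ holomorphic in $\overline\Omega$, converts the jump of the normal derivative into $-2i\tau f'$ by comparing tangential derivatives across $\partial\Omega$, and evaluates the resulting Cauchy-type integral $-4i\oint_{\partial\Omega} f'(z)/(z-z_0)\intd z$ by the residue theorem, identifying $\partial_z u_{z_0} = f'$. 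You instead keep the physical function $\psi = u - g$, use Green's second identity on $\Omega\setminus \overline{B_\epsilon(z_0)}$ (legitimate since $\bar g(z) = 2/(z-z_0)$ and $\psi$ are harmonic there and $\psi=0$ on $\partial\Omega$) to collapse the contour onto $\partial B_\epsilon(z_0)$, and compute the limit directly; I checked your endgame: the $O(\epsilon^{-3})$ integrands cancel exactly before integration, the $u(z_0)$ and $\partial_{\bar z}u(z_0)$ modes vanish by orthogonality, and the two surviving contributions each give $4\pi\,\partial_z u(z_0)$ (the Taylor remainders are uniform since $u$ is real-analytic near the interior point $z_0$), so the total is indeed $8\pi\,\partial_z u_{z_0}(z_0)$. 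What each approach buys: the paper's argument is a clean piece of complex analysis but needs the single-valued harmonic conjugate, hence simple connectedness, as you correctly observe; yours is the classical singularity-extraction argument, avoids that hypothesis, and generalizes more readily (e.g.\ to the strip and half-plane adaptations mentioned after Propositions \ref{prop:stripes} and \ref{prop:half_space}), at the cost of a more computational final step. One small slip, in an aside you do not use: the distributional Laplacian of the glued $u$ is the single layer with density $\partial_\nu g - \partial_\nu u^{\mathrm{int}}$ (not its negative) along $\partial\Omega$; with that sign, $T(z_0) = -\int_{\R^2}\bar u\,\Delta u$ reproduces your boundary formula consistently.
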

 
We note that due to the continuous dependence of the boundary values
of $u$ on $z_0$, the function $T(z_0)$ is continuous for all
$z_0 \in \Omega$. Moreover, since $|\nabla u (x)|$ behaves like
$|x-z_0|^{-2}$ for all $x\in \R^2\setminus \Omega$ and all competitors
$u$ from equation \eqref{eq:Ta0}, which is not $L^2$-integrable on
$\R^2\setminus \Omega$ if $a_0=z_0\in \partial \Omega$, we have
$T(z_0) \to +\infty$ as $z_0$ approaches $\partial \Omega$. Therefore,
$T(z_0)$ always attains its minimum for some $z_0 \in \Omega$.

\subsubsection{Disks}

For the special choice $\Omega = B_\ell(0)$ with $\ell > 0$
we can fully solve the above minimization problem, obtaining that the
skyrmion will be located in the disk's center.

\begin{proposition}\label{prop:balls}
  For $\Omega = B_\ell(0)$ and $z_0\in \Omega$, the map
  achieving $T(z_0)$ is given by
	\begin{align}\label{def:minimizer_ball}
          u_{z_0}(z) = \begin{cases}
            \frac{2z }{\ell^2 - \bar{z}_0 z }  & \text{ if } z
            \in B_{\ell}(0), \\ 
            \frac{2}{\bar z- \bar{z}_0} & \text{ if } z
            \in
            \mathbb C \setminus B_{\ell}(0).
		\end{cases}
	\end{align}
        Its energy is given by
          \begin{align}
            T( z_0) = \frac{16 \pi\ell^2}{ (\ell^2-|z_0|^2)^2},  
          \end{align}
          which is minimized by $z_0 = 0$ with
          $T(0)= \frac{16\pi}{ \ell^2}$. The rescaled skyrmion radius
          is $r_0= \frac{\ell^2}{4}$ and the corresponding limiting
          energy is
          $\mathcal{E}_0 \left( \operatorname{id}, \frac{\ell^2}{4}, 0
          \right) = - \pi \ell^2$.
\end{proposition}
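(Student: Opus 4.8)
The plan is to exhibit the function \eqref{def:minimizer_ball} directly, verify that it solves the harmonic extension problem \eqref{eq:ez0}, and then read off $T(z_0)$ from the representation formula of Proposition \ref{lemma:representation}.

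\emph{Step 1: the candidate is the harmonic extension.} For $z_0 \in B_\ell(0)$ the map $z \mapsto \frac{2z}{\ell^2 - \bar z_0 z}$ is holomorphic on a neighbourhood of $\overline{B_\ell(0)}$: its only pole sits at $z = \ell^2/\bar z_0$ (or there is no pole at all, when $z_0 = 0$), and $\abs{\ell^2/\bar z_0} = \ell^2/\abs{z_0} > \ell$ precisely because $z_0 \in B_\ell(0)$. Hence this expression is harmonic in $B_\ell(0)$ and smooth up to the boundary. On $\partial B_\ell(0)$ one has $\bar z = \ell^2/z$, so $\frac{2}{\bar z - \bar z_0} = \frac{2z}{\ell^2 - \bar z_0 z}$, which shows that the two branches in \eqref{def:minimizer_ball} agree on $\partial B_\ell(0)$; thus $u_{z_0}$ is continuous across $\partial B_\ell(0)$ and matches the prescribed exterior data. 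Finally $u_{z_0}$ has finite Dirichlet energy on $\R^2$: the interior branch is smooth, while in the exterior $\abs{\nabla u_{z_0}(x)} \lesssim \abs{x - z_0}^{-2}$, which is square integrable on $\R^2 \setminus B_\ell(0)$ because $z_0$ stays inside $B_\ell(0)$. By the uniqueness of the harmonic extension stated just before Proposition \ref{lemma:representation}, $u_{z_0}$ is the minimiser in \eqref{eq:Ta0}, and therefore also the solution of \eqref{eq:ez0}.

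\emph{Step 2: evaluate $T(z_0)$.} Since $B_\ell(0)$ is simply connected with smooth boundary, Proposition \ref{lemma:representation} applies and gives $T(z_0) = 8\pi\, \partial_z u_{z_0}(z_0)$. As $u_{z_0}$ is holomorphic on $B_\ell(0)$, $\partial_z u_{z_0}$ is its ordinary complex derivative; writing $u_{z_0}(z) = -\tfrac{2}{\bar z_0} + \tfrac{2\ell^2}{\bar z_0(\ell^2 - \bar z_0 z)}$ for $z_0 \ne 0$ (and trivially for $z_0 = 0$), a one-line computation yields
\[
  \partial_z u_{z_0}(z) = \frac{2\ell^2}{(\ell^2 - \bar z_0 z)^2}.
\]
Evaluating at $z = z_0$ gives $\partial_z u_{z_0}(z_0) = 2\ell^2/(\ell^2 - \abs{z_0}^2)^2$, hence $T(z_0) = 16\pi\ell^2/(\ell^2 - \abs{z_0}^2)^2$. (As a cross-check one may instead split the integral \eqref{eq:Tuz} into $B_\ell(0)$, where $\abs{\nabla u_{z_0}}^2 = 2\abs{\partial_z u_{z_0}}^2$, and its complement, where $\abs{\nabla u_{z_0}}^2 = 8\abs{z - z_0}^{-4}$, and compute both pieces by hand; the representation formula just shortcuts this.)

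\emph{Step 3: minimise and collect constants.} The function $\abs{z_0} \mapsto (\ell^2 - \abs{z_0}^2)^2$ is strictly decreasing on $[0,\ell)$, so $T$ is minimised uniquely at $z_0 = 0$, with $T(0) = 16\pi/\ell^2$. Substituting into Theorem \ref{thm:minimizers} (and using that \eqref{eq:E0lower} is an equality at the optimal radius) gives $r_0 = 4\pi/T(0) = \ell^2/4$ and $\mathcal E_0(\operatorname{id}, \ell^2/4, 0) = -16\pi^2/T(0) = -\pi\ell^2$. Every step is elementary once \eqref{def:minimizer_ball} is written down, so there is no real obstacle; the only point that requires a moment's care is ensuring that the pole of the interior expression lies strictly outside $\overline{B_\ell(0)}$ and that the exterior tail has finite energy, which is exactly where the constraint $z_0 \in \Omega$ is used.
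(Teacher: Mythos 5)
Your proof is correct and follows essentially the same route as the paper: verify that the explicit formula is harmonic inside $B_\ell(0)$ and matches the exterior data continuously on $\partial B_\ell(0)$ (the paper phrases this via the Kelvin transform, you via $\bar z = \ell^2/z$ on the boundary), invoke uniqueness of the harmonic extension, and then apply Proposition \ref{lemma:representation} to read off $T(z_0)$ before minimizing in $z_0$ and substituting into the limit energy. The computations of $\partial_z u_{z_0}(z_0)$, $T(z_0)$, $r_0$, and $\mathcal E_0$ all check out.
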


Note that the minimizer achieving $T(a_0)$ has the special property of
being a holomorphic function in $\Omega$.  In the next example of
strips we will see that this does not necessarily have to be the case.

\subsubsection{Strips}

We can also consider the energy \eqref{def:energy} on strips
$\Omega_\ell = \mathbb{R} \times (-\ell/2,\ell/2)$ for $\ell >0$.
Technically, the previous statements do not apply as $\Omega_\ell$ is
not bounded.  However, the arguments can be adjusted straightforwardly
as strips support Poincar{\'e} inequalities. We will give the
modifications in section \ref{sec:proof_stripes} below.

The only change in the resulting statement is that in the
BP-convergence we will, due to the translational invariance of
$\Omega_\ell$ in the first component, only track the second component
of the skyrmion center, so that the limiting set is
\begin{align}
  \widetilde{\mathcal{A}_0}
  & := \left\{ R_0 \in SO(3): R_0 e_3=e_3
    \right\} \times (0,\infty)\times
    \left(-\frac{\ell}{2},\frac{\ell}{2}\right). 
\end{align}
Furthermore, the limiting energy is given by
\begin{align}
  \mathcal{E}_0 (R_0,r_0, y_0) := r_0^2 T( i y_0) - 2 r_0
  \int_{\R^2} (R_0\Phi)' \cdot \nabla \Phi_3 \intd x, 
\end{align}
where
\begin{align}
  T( i y_0 ):=\inf\left\{ \int_{\R^2}  | \nabla u |^2 \intd x  :  
  u(z) =\frac{2}{\bar z + i y_0} \text{ in }
  \mathbb C \setminus\Omega_\ell  \right\}. 
\end{align}

Also this problem can be solved explicitly.
\begin{proposition}\label{prop:stripes}
  For $\ell>0$, $\Omega_\ell = \mathbb{R} \times (-\ell/2,\ell/2)$,
  and $y_0 \in (-\ell/2,\ell/2)$, the map achieving
  $T(i y_0)$ is given by
	\begin{align}\label{def:minimizer_stripes}
          u_{y_0}(z) = \begin{cases}
            \frac{\pi}{\ell} \tanh\left( \frac{\pi}{2\ell} (z+i
              y_0)  \right) - \frac{\pi}{\ell} \coth \left(
              \frac{\pi}{2\ell} (\bar z  + i y_0)  \right)+  \frac{2
            }{\bar z + i y_0 }  & \text{ if } z \in \Omega_\ell, \\ 
            \frac{2}{\bar z + i y_0} & \text{ if } z \in
            \mathbb C \setminus
            \Omega_\ell.
		\end{cases}
	\end{align}
        Its energy is given by
        \begin{align}
          T(iy_0) = \frac{4\pi^3}{\ell^2
          \cos^2\left(\frac{\pi y_0}{\ell} \right) },  
        \end{align}
        which is minimized by $y_0 = 0$ with
        $T(0)= \frac{4\pi^3 }{\ell^2}$. The rescaled skyrmion radius
        is $r_0= \frac{\ell^2}{\pi^2}$ and the corresponding limiting
        energy is
        $\mathcal{E}_0 \left( \operatorname{id}, \frac{\ell^2}{\pi^2},
          0 \right) = - \frac{4 \ell^2}{\pi}$.
      \end{proposition}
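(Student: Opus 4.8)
\noindent\textit{Proof proposal for Proposition \ref{prop:stripes}.}
The plan is to identify $u_{y_0}$ from \eqref{def:minimizer_stripes} as the harmonic extension into $\Omega_\ell$ of the boundary datum $z\mapsto 2/(\bar z+iy_0)$, which by the Dirichlet principle is the unique minimizer in \eqref{eq:Ta0} once it is shown to have finite Dirichlet energy; uniqueness on the unbounded strip is supplied by the Poincaré inequality on $\Omega_\ell$ applied to the difference of two competitors. Step one is to check harmonicity of \eqref{def:minimizer_stripes} in $\Omega_\ell$. Its $\tanh$-term is holomorphic in $z$, while its $\coth$-term and Cauchy kernel depend only on $\bar z$, so $\partial_z\partial_{\bar z}u_{y_0}=0$ away from poles. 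The poles of $\tanh(\tfrac{\pi}{2\ell}(z+iy_0))$ lie on $\operatorname{Im}z\in(\ell-y_0)+2\ell\Z$ and those of $\coth(\tfrac{\pi}{2\ell}(\bar z+iy_0))$, apart from the one at $z=iy_0$, lie on $\operatorname{Im}z\in y_0+2\ell(\Z\setminus\{0\})$, all outside the open strip since $|y_0|<\ell/2$; and near $z=iy_0$ the principal part $-2/(\bar z+iy_0)$ of $-\tfrac{\pi}{\ell}\coth(\tfrac{\pi}{2\ell}(\bar z+iy_0))$ exactly cancels the principal part of $2/(\bar z+iy_0)$, so $u_{y_0}$ is smooth across $iy_0$ as well.

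Step two is the boundary condition. On $\operatorname{Im}z=\ell/2$ one has $\tfrac{\pi}{2\ell}(z+iy_0)=\zeta+i\pi/4$ and $\tfrac{\pi}{2\ell}(\bar z+iy_0)=\zeta-i\pi/4$ with $\zeta=\tfrac{\pi}{2\ell}\operatorname{Re}z+i\tfrac{\pi y_0}{2\ell}$; since $\tanh(i\pi/4)=i$, the addition formula for $\tanh$ gives $\tanh(\zeta+i\pi/4)\tanh(\zeta-i\pi/4)=\tfrac{\tanh^2\zeta+1}{1+\tanh^2\zeta}=1$, i.e.\ $\tanh(\tfrac{\pi}{2\ell}(z+iy_0))=\coth(\tfrac{\pi}{2\ell}(\bar z+iy_0))$ there, so the first two terms of $u_{y_0}$ cancel on the upper edge and $u_{y_0}=2/(\bar z+iy_0)$; the lower edge is symmetric. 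Finiteness of the energy follows from $\tanh w-\coth w=-2/\sinh 2w$ and the $|z|^{-1}$ decay of the Cauchy kernel along the strip, so that $|\nabla u_{y_0}|=O(|z|^{-2})$.

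Step three is to evaluate $T(iy_0)$ via the strip analogue of Proposition \ref{lemma:representation} (whose proof is unchanged once Poincaré on $\Omega_\ell$ replaces boundedness of $\Omega$), which gives $T(iy_0)=8\pi\,\partial_z u_{y_0}(iy_0)$. Only the $\tanh$-term survives under $\partial_z$, so $\partial_z u_{y_0}(z)=\tfrac{\pi^2}{2\ell^2}\cosh^{-2}(\tfrac{\pi}{2\ell}(z+iy_0))$, and at $z=iy_0$ the argument is $i\pi y_0/\ell$ with $\cosh(i\pi y_0/\ell)=\cos(\pi y_0/\ell)$, yielding $T(iy_0)=4\pi^3/(\ell^2\cos^2(\pi y_0/\ell))$; alternatively this could be obtained by splitting $\int_{\R^2}|\nabla u_{y_0}|^2$ over $\Omega_\ell$ and its complement and integrating by parts using harmonicity, but the representation formula is shorter. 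Since $\pi y_0/\ell$ ranges over $(-\pi/2,\pi/2)$, the factor $\cos^2(\pi y_0/\ell)$ is maximal at $y_0=0$, so $T$ is minimized there with $T(0)=4\pi^3/\ell^2$ (and $T(iy_0)\to+\infty$ as $y_0\to\pm\ell/2$). The remaining assertions then come from the general structure: $r_0=4\pi/T(0)=\ell^2/\pi^2$, and by \eqref{eq:E0lower} at the optimal radius (strip version of Theorem \ref{thm:minimizers}) $\mathcal E_0(\operatorname{id},\ell^2/\pi^2,0)=-16\pi^2/T(0)=-4\ell^2/\pi$.

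I expect the main obstacle to be steps one and two: producing the closed form \eqref{def:minimizer_stripes} in the first place, most naturally via the method of images (reflecting the pole $iy_0$ repeatedly across the two edges, so that the resulting Cauchy kernels sum, through the partial-fraction expansions of $\tanh$ and $\coth$, to the stated hyperbolic functions), and then verifying the boundary identity, together with the bookkeeping needed to legitimize uniqueness and the representation formula on the unbounded strip through the Poincaré inequality.
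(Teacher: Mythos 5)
Your proposal is correct and follows essentially the same route as the paper: verify that $u_{y_0}$ is harmonic in the strip (holomorphic plus anti-holomorphic parts, all poles outside $\Omega_\ell$ except the $\coth$ singularity at $iy_0$, which is cancelled by the Cauchy kernel), check the boundary values via the identity $\tanh\bigl(\zeta\pm\tfrac{i\pi}{4}\bigr)=\coth\bigl(\zeta\mp\tfrac{i\pi}{4}\bigr)$, and then evaluate $T(iy_0)=8\pi\,\partial_z u_{y_0}(iy_0)$ by the strip-adapted version of Proposition \ref{lemma:representation}, which yields the stated formula and the values of $r_0$ and $\mathcal{E}_0$. The only cosmetic differences are that the paper normalizes $\ell=\pi/2$ and states rather than computes the final derivative, while you keep general $\ell$ and make the evaluation explicit.
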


      The formula for $u_{y_0}$ above was obtained by computing the
      harmonic extension of the boundary data in Fourier space, but to
      verify its validity we only need to check that it satisfies the
      conditions defining $u_{y_0}$.

\subsubsection{Half-plane}
\label{sec:halfplane}

For the half-space $\Omega = \R \times (-\infty, 0)$ our rigorous
arguments cannot be salvaged, and indeed in this case the energy can
be easily seen to be unbounded from below.  However, we may still
consider the problem as arising from a limiting procedure where the
distance of the skyrmion center to the boundary of growing, smooth
domains is fixed.  Then we obtain the problem
\begin{align}
  \mathcal{E}_0 (R_0,r_0, y_0) := r_0^2 T( i y_0 ) - 2 r_0
  \int_{\R^2} (R_0\Phi)' \cdot \nabla \Phi_3 \intd x, 
\end{align}
defined on
\begin{align}
  \widetilde{\mathcal{A}_0}& := \left\{ R_0 \in SO(3): R_0 e_3=e_3
                             \right\} \times (0,\infty)\times
                             \left(-\infty, 0\right) 
\end{align}
and where
\begin{align}
  T(iy_0):=\inf\left\{ \int_{\R^2} | \nabla u |^2 \intd x  :
  u(z) = \frac{2}{\bar z + i y_0}  \text{ in }
  \mathbb C\setminus\Omega\right\},
\end{align}
where the skyrmion is located at $z_0 = i y_0$ with $y_0 < 0$ in the
limit. 

The straightforward solution then gives information about how the
energy of the skyrmion behaves as it approaches the boundary.  Of
course, the repelling effect of the boundary can also be seen from our
rigorous analysis.  However, in this situation, the estimate is
especially transparent.

\begin{proposition}\label{prop:half_space}
  For $\Omega = \mathbb{R} \times (-\infty,0)$, and
  $y_0 \in (-\infty, 0 )$, the map achieving $T(i y_0)$
  is given by
	\begin{align}\label{def:minimizer_half_space}
          u_{y_0}(z) = \begin{cases}
            \frac{2}{z+ i y_0}  & \text{ if } z \in \Omega, \\
            \frac{2}{\bar z+ i y_0} & \text{ if } z \in
            \mathbb C \setminus
            \Omega.
		\end{cases}
	\end{align}
        Its energy is given by $T(i y_0) = \frac{4\pi}{y_0^2}$, the
        corresponding rescaled skyrmion radius is $r_0= y_0^2$ and the
        limiting energy is
        $\mathcal{E}_0 \left( \operatorname{id}, y_0^2 , y_0 \right) =
        - 4 \pi y_0^2$.
      \end{proposition}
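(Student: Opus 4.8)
The plan is to treat Proposition~\ref{prop:half_space} as a direct verification: exhibit that the stated $u_{y_0}$ solves the minimization problem defining $T(iy_0)$ on $\Omega = \R \times(-\infty,0) = \{z : \operatorname{Im} z < 0\}$, compute its Dirichlet energy, and then read off $r_0$ and the limiting energy from \eqref{eq:E0lower}. First I would reduce, exactly as in the bounded case, to a harmonic extension problem: the infimum defining $T(iy_0)$ is attained by the map that equals the prescribed data $z \mapsto 2/(\bar z + iy_0)$ on $\mathbb C\setminus\Omega$ and is the harmonic extension of its boundary trace into $\Omega$. On $\partial\Omega = \R\times\{0\}$ one has $z = x$ real, so that trace is $x \mapsto 2/(x+iy_0)$. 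For the unbounded half-plane the Dirichlet principle still applies: any competitor $v$ that agrees with $u_{y_0}$ on $\mathbb C\setminus\Omega$ and has finite Dirichlet energy differs from $u_{y_0}$ on $\Omega$ by a function in the closure of $C_c^\infty(\Omega)$ in the homogeneous $H^1$ seminorm, so the cross term $\int_\Omega \nabla u_{y_0}\cdot\nabla(v-u_{y_0})$ vanishes by integration by parts once $u_{y_0}$ is harmonic in $\Omega$. This parallels the bounded case already used around \eqref{eq:ez0}.

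Next I would identify the extension. The function $z \mapsto 2/(z+iy_0)$ is holomorphic, hence harmonic, on $\Omega$: its only singularity $z = -iy_0$ has imaginary part $-y_0 > 0$ and therefore lies in the open upper half-plane, outside $\overline\Omega$; it restricts to $2/(x+iy_0)$ on $\{\operatorname{Im}z = 0\}$, matching the trace; and $|\nabla u_{y_0}| \sim |z|^{-2}$ at infinity is square-integrable in two dimensions. (Symmetrically, the exterior formula $2/(\bar z + iy_0)$ is anti-holomorphic with its singularity at $z = iy_0 \in \Omega$, hence smooth on $\mathbb C\setminus\Omega$.) Thus $u_{y_0}$ from \eqref{def:minimizer_half_space} is the claimed minimizer, and $T(iy_0) = \int_{\R^2} |\nabla u_{y_0}|^2 \intd x$.

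Then I would compute this energy. On $\Omega$, holomorphy gives $|\nabla u_{y_0}|^2 = 2|u_{y_0}'|^2 = 8\,|z+iy_0|^{-4}$, while on $\mathbb C\setminus\Omega$, anti-holomorphy gives $|\nabla u_{y_0}|^2 = 8\,|\bar z + iy_0|^{-4} = 8\,|z-iy_0|^{-4}$. Translating by $w = z+iy_0$ in the first integral and $w = z-iy_0$ in the second, each becomes $8\int_{\{\operatorname{Im}w < -|y_0|\}} |w|^{-4}\intd A$, respectively over $\{\operatorname{Im}w > |y_0|\}$; passing to polar coordinates $w = re^{i\theta}$, the radial integral over $r \in (|y_0|/|\sin\theta|,\infty)$ contributes $\tfrac{1}{2}|y_0|^{-2}\sin^2\theta$, and $\int_\pi^{2\pi}\sin^2\theta\intd\theta = \pi/2$, so each piece equals $\tfrac{2\pi}{y_0^2}$ and $T(iy_0) = \tfrac{4\pi}{y_0^2}$. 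Finally, \eqref{eq:E0lower} shows $\mathcal E_0(\operatorname{id},r_0,iy_0)$ is minimized over $r_0$ at $r_0 = 4\pi/T(iy_0) = y_0^2$, with value $-16\pi^2/T(iy_0) = -4\pi y_0^2$, which is the assertion.

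I do not expect a genuine obstacle here; the only point needing care is the Dirichlet-principle/integration-by-parts argument on the unbounded domain, and even that is standard once one notes the required decay of competitors. As a consistency check I would observe that the same formulas arise by letting $\ell \to \infty$ in Proposition~\ref{prop:stripes} with the skyrmion kept at fixed distance $d = |y_0|$ from one wall: writing $y_0 = -\ell/2 + d$, one has $\cos(\pi y_0/\ell) = \sin(\pi d/\ell) \sim \pi d/\ell$, so $T \to 4\pi/d^2$, matching $T(iy_0) = 4\pi/y_0^2$ (and likewise as a large-radius limit of Proposition~\ref{prop:balls}).
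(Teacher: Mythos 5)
Your proposal is correct. The paper's own proof is extremely terse: it declares the verification that $u_{y_0}$ realizes $T(iy_0)$ to be trivial and then obtains the value $T(iy_0)=\frac{4\pi}{y_0^2}$ by invoking the representation formula of Proposition \ref{lemma:representation}, $T(z_0)=8\pi\,\partial_z u_{z_0}(z_0)$, ``adapted to the half-plane setting'' (indeed, with $f(z)=\frac{2}{z+iy_0}$ one gets $f'(iy_0)=\frac{1}{2y_0^2}$ and hence $T=\frac{4\pi}{y_0^2}$). You instead evaluate $\int_{\R^2}|\nabla u_{y_0}|^2\intd x$ directly, using $|\nabla u_{y_0}|^2=2|u_{y_0}'|^2$ and polar coordinates on each half-plane; your computation is right and yields $\frac{2\pi}{y_0^2}$ per half-plane, hence $T(iy_0)=\frac{4\pi}{y_0^2}$, and the values $r_0=y_0^2$, $\mathcal E_0=-4\pi y_0^2$ follow from \eqref{eq:E0lower} exactly as in the paper. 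The trade-off: the paper's route reuses the contour-integral machinery but must (implicitly) justify its adaptation to an unbounded domain, while your direct integration is self-contained and avoids Proposition \ref{lemma:representation} altogether; in exchange you must justify the Dirichlet principle on the unbounded half-plane, which you correctly flag as the only delicate point — and which is indeed standard here, e.g.\ by integrating by parts on large half-disks and using that $|\nabla u_{y_0}|\sim|z|^{-2}$ together with the vanishing trace of the competitor difference on $\partial\Omega$ to kill the boundary terms along a suitable sequence of radii. Your consistency checks against the $\ell\to\infty$ limit of Proposition \ref{prop:stripes} and the large-radius limit of Proposition \ref{prop:balls} are a nice addition not present in the paper.
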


\subsection{Adding anisotropy}
\label{sec:aniso}

We may also consider the case where we augment our energy by an
anisotropy term. In order not to significantly change the behavior of
the $\Gamma$-limit, we choose the quality factor $Q$ in dependence of
$\kappa$ such that, after the renormalization in the thin film limit
by the local contribution of the stray field term, the resulting term
is essentially a compact perturbation of our above results.  As it is
well known that the anisotropy contribution of a skyrmion with radius
$\rho>0$ behaves like $\rho^2 |\log \rho|$, see for example
\cite{doring2017compactness,bernand2019quantitative}, and as in our
case $\rho \sim \kappa$, the appropriate scaling is
$Q - 1 = \lambda |\log \kappa|^{-1}$ for some $\lambda >0$.
Consequently, we obtain the modified energy
\begin{align}
  \mathcal{E}_{\kappa,\lambda}(m) :=  \int_{\Omega} \left( |\nabla
  \m|^2 -2 \kappa  \m'\cdot  
  \nabla m_3 +\frac{ \lambda}{ | \log \kappa|} |m'|^2 \right) \intd x. 
\end{align}
Notice that the statement of Theorem \ref{prop:existence} remains
valid for minimizers of $\mathcal{E}_{\kappa,\lambda}$.

The following proposition then implies that the $\Gamma$-limit with
respect to the BP-convergence at order $\kappa^2$ is given by
\begin{align}
  \mathcal{E}_0 (R_0,r_0, a_0) := r_0^2( T( a_0) + 8\pi \lambda) -
  2 r_0 \int_{\R^2} (R_0\Phi)' \cdot \nabla \Phi_3 \intd x 
\end{align}
for $(R_0,r_0, a_0 ) \in \widetilde{\mathcal{A}_0}$. 

\begin{proposition}\label{lemma:anisotropy}
  For $\kappa_n \to 0$ as $n \to \infty$, let
  $m_{\kappa_n} \in \mathcal{A}$ BP-converge to
  $(R_0,r_0,a_0) \in \widetilde{\mathcal{A}_0}$.  Then we have
	\begin{align}
          \lim_{n\to \infty} \frac{1}{\kappa_n^2 |\log \kappa_n|}
          \int_{\Omega} |m'_{\kappa_n}|^2 \intd x = 8\pi r_0^2. 
	\end{align}
      \end{proposition}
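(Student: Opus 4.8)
The plan is to decompose $m_{\kappa_n} = \tilde\phi_n + w_n$ into (essentially) a rescaled Belavin--Polyakov profile $\tilde\phi_n$, which carries the whole logarithmically divergent in-plane energy, and a lower-order tail $w_n$, and then to compute the two contributions separately. Concretely, I would fix $R_n \in SO(3)$, $\rho_n > 0$, $a_n \in \Omega$ as provided by BP-convergence, with $\phi_n := R_n\Phi(\rho_n^{-1}(\bullet - a_n))$ satisfying \eqref{eq:m_close_to_phi}--\eqref{eq:radius_kappa}, and set $w_n := m_{\kappa_n} + e_3 - \phi_n - R_n e_3$ and $\tilde\phi_n := \phi_n + R_n e_3 - e_3$, so that $m_{\kappa_n} = \tilde\phi_n + w_n$, $\tilde\phi_n \to -e_3$ at infinity, and $\nabla w_n = \nabla(m_{\kappa_n} - \phi_n)$, whence $\norm{\nabla w_n}_{L^2(\R^2)} \le C\kappa_n$ by \eqref{eq:m_close_to_phi}. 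The first step is to control $w_n$ on $\Omega$: since $m_{\kappa_n} = -e_3$ on $\partial\Omega$, one has $w_n = -R_n(\Phi(\rho_n^{-1}(\bullet - a_n)) + e_3)$ there, and using $\abs{\Phi(y) + e_3} = 2(1 + \abs{y}^2)^{-1/2}$ together with $\operatorname{dist}(a_n, \partial\Omega) \to \operatorname{dist}(a_0, \partial\Omega) > 0$ this gives $\norm{w_n}_{L^2(\partial\Omega)} \le C\rho_n$. Then the Poincaré inequality $\norm{u}_{L^2(\Omega)} \le C_\Omega(\norm{\nabla u}_{L^2(\Omega)} + \norm{u}_{L^2(\partial\Omega)})$ on the bounded Lipschitz domain $\Omega$ (a standard consequence of the compactness of $H^1(\Omega) \hookrightarrow L^2(\Omega)$ and of the trace into $L^2(\partial\Omega)$) yields $\norm{w_n'}_{L^2(\Omega)} \le \norm{w_n}_{L^2(\Omega)} \le C\kappa_n = o(\kappa_n\abs{\log\kappa_n}^{1/2})$.

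The second step is to evaluate $\norm{\tilde\phi_n'}_{L^2(\Omega)}$. Decomposing $\Phi = (\Phi', \Phi_3)$ and writing $R_0 = \operatorname{diag}(Q_0, 1)$ with $Q_0 \in SO(2)$ (possible since $R_0 e_3 = e_3$), a short computation gives the pointwise bound
\begin{equation*}
  \bigabs{\tilde\phi_n'(x) - Q_0\Phi'(\rho_n^{-1}(x - a_n))} \le \norm{R_n - R_0}\,\bigabs{\Phi'(\rho_n^{-1}(x - a_n))} + \bigl(1 + \Phi_3(\rho_n^{-1}(x - a_n))\bigr)\,\bigabs{(R_n e_3 - e_3)'}.
\end{equation*}
I would then use $\abs{\Phi'(y)}^2 = 4\abs{y}^2(1 + \abs{y}^2)^{-2}$ and $1 + \Phi_3(y) = 2(1 + \abs{y}^2)^{-1}$, the elementary asymptotics $\int_{B_R}\abs{\Phi'}^2\intd y = 8\pi\log R + O(1)$ as $R \to \infty$ (the logarithmic divergence of the anisotropy tail being well known, see \cite{doring2017compactness,bernand2019quantitative}) and $\int_{\R^2}(1 + \Phi_3)^2\intd y = 4\pi$, together with the inclusions $B_{c/\rho_n}(0) \subset \rho_n^{-1}(\Omega - a_n) \subset B_{C/\rho_n}(0)$ valid for large $n$ (because $a_n$ stays away from $\partial\Omega$); rescaling by $\rho_n$ then gives $\int_\Omega\abs{\Phi'(\rho_n^{-1}(\bullet - a_n))}^2\intd x = 8\pi\rho_n^2\abs{\log\rho_n} + O(\rho_n^2)$ and $\int_\Omega(1 + \Phi_3(\rho_n^{-1}(\bullet - a_n)))^2\intd x \le 4\pi\rho_n^2$. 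Since $\norm{R_n - R_0} \to 0$ and $\abs{(R_n e_3 - e_3)'} \to 0$, the pointwise bound yields $\norm{\tilde\phi_n' - Q_0\Phi'(\rho_n^{-1}(\bullet - a_n))}_{L^2(\Omega)}^2 = o(\rho_n^2\abs{\log\rho_n})$, and combining with $\norm{Q_0\Phi'(\rho_n^{-1}(\bullet - a_n))}_{L^2(\Omega)}^2 = 8\pi\rho_n^2\abs{\log\rho_n} + O(\rho_n^2)$ via the triangle inequality in $L^2(\Omega)$ gives $\norm{\tilde\phi_n'}_{L^2(\Omega)}^2 = 8\pi\rho_n^2\abs{\log\rho_n}\,(1 + o(1))$.

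Putting the two steps together through $m_{\kappa_n}' = \tilde\phi_n' + w_n'$ and the triangle inequality, I would get $\int_\Omega\abs{m_{\kappa_n}'}^2\intd x = 8\pi\rho_n^2\abs{\log\rho_n}(1 + o(1)) + O(\kappa_n^2\abs{\log\kappa_n}^{1/2})$; dividing by $\kappa_n^2\abs{\log\kappa_n}$ and using $\rho_n/\kappa_n \to r_0$ and $\log\rho_n = \log\kappa_n + \log(\rho_n/\kappa_n) = \log\kappa_n + O(1)$ (so $\abs{\log\rho_n}/\abs{\log\kappa_n} \to 1$) then finishes the proof. I expect the main obstacle to be the bookkeeping around the rotation: for finite $n$ the matrix $R_n$ need not fix $e_3$, so $\phi_n'$ carries the constant $(R_n e_3 - e_3)'$, which is a priori only known to be $o(1)$ with no rate, and bounding it crudely by $o(1)\,\abs{\Omega}^{1/2}$ on all of $\Omega$ would be useless since $\kappa_n^2\abs{\log\kappa_n} \to 0$; the remedy is to absorb it into the $\Phi_3$-component of $\phi_n'$, so that it appears multiplied by $1 + \Phi_3(\rho_n^{-1}(\bullet - a_n))$, a quantity concentrated at scale $\rho_n$ with $L^2(\R^2)$-norm $O(\rho_n)$, which makes this term contribute only $O(\abs{(R_n e_3 - e_3)'}^2\rho_n^2) = o(\kappa_n^2\abs{\log\kappa_n})$ regardless of how slowly $R_n \to R_0$. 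The only other point needing care is that $w_n$ does not vanish on $\partial\Omega$, being $O(\rho_n)$ there, which is exactly why the Poincaré inequality above is used with the boundary datum $\norm{w_n}_{L^2(\partial\Omega)} = O(\rho_n)$ rather than the $H^1_0$ version.
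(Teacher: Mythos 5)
Your proposal is correct and follows essentially the same strategy as the paper: you control the tail $w_n$ in $L^2(\Omega)$ via a Friedrichs/Poincar\'e inequality with a boundary trace term (the paper uses Friedrichs on a large ball, you use $\partial\Omega$ directly), reduce the anisotropy integral to that of the rotated Belavin--Polyakov profile, and obtain $8\pi\rho_n^2\abs{\log\rho_n}$ using $R_0 e_3 = e_3$. The only difference is cosmetic: the paper handles the rotation by a two-sided squeeze (upper bound via $\abs{(R_n v)'}\le\abs{v}$, lower bound on an inscribed ball after passing to the limit rotation), while you compare pointwise with the $R_0$-rotated profile and absorb the rate-free term $(R_n e_3-e_3)'$ into the $L^2$-integrable factor $1+\Phi_3$, which is an equally valid bookkeeping choice.
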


      In particular, the above result shows that the addition of
        anisotropy does not affect the center of the skyrmion in the
        limit $\kappa \to 0$ in the considered regime. As before,
          the limit of $\mathcal E_0$ is achieved by the N\'eel
          profile, $R_0 = \mathrm{id}$, the center $a_0 =
          \mathrm{argmin}_{a_0 \in \Omega} T(a_0)$ and
          \begin{align}
            r_0 = {4 \pi \over \min_{a_0 \in \Omega} T(a_0) + 8 \pi
            \lambda}. 
          \end{align}
          The corresponding minimal energy is
          \begin{align}
            \min_{\mathcal A_0} \mathcal E_0 = -{16 \pi^2 \over
            \min_{a_0 \in \Omega} T(a_0) + 8 \pi 
            \lambda}. 
          \end{align}

\subsection{A note on free boundary conditions}
\label{sec:free}

We wish to also mention the difference between the behavior of the
energy for BP-converging sequences for the Dirichlet problem
associated with the admissible class $\mathcal A$ and that of the
analogous free problem in which the Dirichlet boundary condition at
$\partial \Omega$ is absent. We point out that in this case the
BP-limit does not give rise to a well-behaved energy whose
minimization would yield the position of the skyrmion in $\Omega$ as
$\kappa \to 0$. Indeed, in the latter case the restriction to $\Omega$
of the N\'eel-type Belavin-Polyakov profile
$\phi_n = R_n \Phi(\rho_n^{-1} (\bullet - a_n))$ with
$R_n = \mathrm{id}$, $\rho_n / \kappa_n \to r_0$ and
$a_n \to a_0 \in \Omega$ in Definition \ref{def:topology} is an
example of a BP-convergent sequence, and, therefore, we have an upper
bound on the Dirichlet energy excess for a sequence of $m_{\kappa_n}$
BP-converging to $(\mathrm{id}, r_0, a_0)$ by
\begin{align}
  Z(\phi_n) = -\int_{\R^2  \setminus \Omega} |\nabla \phi_n|^2 \intd x.
\end{align}

A straightforward computation shows that as $\kappa_n \to 0$ we have
\begin{align}
  \label{eq:H14}
  \begin{split}
    \lim_{n \to \infty} \kappa_n^{-2} Z(\phi_n) & = - \lim_{n \to
      \infty} \kappa_n^{-2} \int_{\R^2 \setminus \Omega} |\nabla
    \phi'_n|^2 \intd
    x \\
    & = -8 r_0^2 \int_{\R^2 \setminus \Omega} {1 \over |x - a_0|^4}
    \intd x = - 4 r_0^2 \int_{\partial \Omega} {(x - a_0) \cdot \nu(x)
      \over |x - a_0|^4} \intd \mathcal H^1(x),
  \end{split}
\end{align}
where $\nu$ is the outward unit normal to $\partial \Omega$, and in
the last line we carried out an integration by parts. This is a
negative contribution that goes to negative infinity as $a_0$
approaches $\partial \Omega$.

For example, if, as in section \ref{sec:halfplane}, we take
$\Omega = \R \times (-\infty, 0)$ and $a_0 = (0, y_0)$ with some
$y_0 \in (-\infty, 0)$, then by \eqref{eq:H14} we have explicitly for
the renormalized energy:
\begin{align}
  \mathcal E_0(\mathrm{id}, r_0, y_0) := \inf_{m_{\kappa_n}} \liminf_{n \to \infty}
  {E_{\kappa_n}(m_{\kappa_n}) - 8 \pi \over \kappa_n^2}   \leq \lim_{n \to \infty}
  {E_{\kappa_n}(\phi_n) - 8 \pi \over \kappa_n^2} 
  = - {2 \pi r_0^2 \over y_0^2} - 8 \pi r_0,
\end{align}
where the infimum is over sequences of $m_{\kappa_n}$ that BP-converge
to $(\mathrm{id}, r_0, a_0)$.  This energy clearly does not have a
minimum in $r_0$, suggesting that the skyrmion is not able to
stabilize its radius at a fixed distance towards the
boundary. Similarly, at fixed radius the skyrmion is attracted towards
the boundary. Dynamically this would give rise to the disappearance of
a skyrmion from $\Omega$ via escape towards the boundary, with zero
energy barrier. This is in contrast with the case of the Dirichlet
boundary conditions considered in section \ref{sec:halfplane}, in
which the exchange contribution has the opposite sign.

Finally, notice that an addition of a sufficiently strong anisotropy
as in section \ref{sec:aniso} may restore existence of local
minimizers. To get some sense for this, consider again a skyrmion in
the half-plane as in the previous paragraph. With the addition of
anisotropy we would then get
\begin{align}
  \mathcal E_0(\mathrm{id}, r_0, y_0) \leq r_0^2 \left( 8 \pi \lambda
  - {2 \pi  \over y_0^2} \right) - 8 \pi r_0, 
\end{align}
and it is clear that the skyrmion should experience a repulsive
interaction and have a well-defined optimal radius far enough from
$\partial \Omega$, while it would still be attracted towards the
boundary close enough to $\partial \Omega$.

\subsection*{Notation and presentation}

Throughout the rest of the paper, we extend $m\in \mathcal{A}$ to
$\R^2$ by $-e_3$.  Furthermore, unless explicitly stated otherwise,
the letters $C, C'$ denote generic, positive constants only depending
on $\Omega$ and which may change from line to line.  Each subsection
first lists its statements and provides a description for their proof
and use throughout the rest of the paper.  The actual proofs are
collected at the end of the respective subsections.

\section{Existence of minimizers}
\label{sec:exist}

We first provide a simple lower bound for the energy that controls the
$L^2$-norm of $\nabla \m$ for sufficiently small $\kappa$.

\begin{lemma}\label{lemma:a_priori}
  For all $\kappa > 0$ and $\m \in H^1(\Omega; \Sphere^2)$ satisfying
  $m = -e_3$ on $\partial \Omega$ we have
  \begin{align}
    \label{eq:Eapriori}
           \mathcal{E}_\kappa(\m) \geq (1 - C \kappa) \int_{\Omega} |\nabla \m |^2
           \intd x,
	\end{align}
        for some $C > 0$ depending only on $\Omega$. Furthermore, if
        also $m\in \mathcal{A}_\kappa$ and $\kappa < 1/(2 C)$ we have
	\begin{align}\label{eq:Dirichlet_excess}
		Z(\m)  \leq 16 \pi C \kappa.
	\end{align}
\end{lemma}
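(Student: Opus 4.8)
The plan is to control the Dzyaloshinskii--Moriya term in $\mathcal{E}_\kappa$ purely by the exchange energy, exploiting the Dirichlet condition. The key observation is that, while $|m'|$ need not be small pointwise, the in-plane part $m'=(m_1,m_2)$ vanishes on $\partial\Omega$, so $m' \in H^1_0(\Omega;\R^2)$ and Poincar\'e's inequality on the bounded Lipschitz domain $\Omega$ gives $\|m'\|_{L^2(\Omega)} \le C_P \|\nabla m'\|_{L^2(\Omega)}$ with $C_P = C_P(\Omega)$. Combining this with Cauchy--Schwarz and the elementary bound $\|\nabla m'\|_{L^2}\|\nabla m_3\|_{L^2} \le \tfrac12 \|\nabla m\|_{L^2}^2$ (which follows from $2ab\le a^2+b^2$ together with $|\nabla m|^2 = |\nabla m'|^2 + |\nabla m_3|^2$), one obtains
\[
  \Bigl| \int_\Omega m' \cdot \nabla m_3 \intd x \Bigr|
  \;\le\; \|m'\|_{L^2(\Omega)}\,\|\nabla m_3\|_{L^2(\Omega)}
  \;\le\; \tfrac{C_P}{2}\int_\Omega |\nabla m|^2 \intd x .
\]
Inserting this into the definition \eqref{def:energy} of $\mathcal{E}_\kappa$ yields \eqref{eq:Eapriori} with $C = C_P$ (or any larger constant), which depends only on $\Omega$.

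For the second assertion, suppose in addition $m\in\mathcal{A}_\kappa$ and $\kappa < 1/(2C)$. By the definition \eqref{def:admissible_restricted} of $\mathcal{A}_\kappa$ we have $\mathcal{E}_\kappa(m) < 8\pi$, while \eqref{eq:Eapriori} gives $\mathcal{E}_\kappa(m) \ge (1-C\kappa)\bigl(8\pi + Z(m)\bigr)$, using that $\int_\Omega |\nabla m|^2 \intd x = 8\pi + Z(m)$ (the extension of $m$ by $-e_3$ contributes no gradient outside $\Omega$). Since $\kappa < 1/(2C)$ we have $1-C\kappa > \tfrac12 > 0$, so we may rearrange to $8\pi + Z(m) < 8\pi/(1-C\kappa)$, hence $Z(m) < 8\pi\, C\kappa/(1-C\kappa) \le 16\pi C\kappa$, which is \eqref{eq:Dirichlet_excess}.

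There is no genuinely hard step here; the only thing to get right is that the cross term should not be attacked via the pointwise constraint $|m|=1$ (which gives no bound on $\int_\Omega |m'|^2$) but via the boundary data, which does through Poincar\'e. One should make sure the Poincar\'e constant used depends only on $\Omega$ (it can, e.g., be taken proportional to $\operatorname{diam}(\Omega)^2$ after taking square roots), so that the resulting $C$ is a geometric constant as claimed. Note also that the degree constraint $\mathcal{N}(m)=1$ is not used in either part; combined with the topological bound \eqref{eq:topological_bound} it additionally forces $Z(m)\ge 0$ on $\mathcal{A}$, so that \eqref{eq:Dirichlet_excess} pins $Z(m)$ to an interval shrinking with $\kappa$.
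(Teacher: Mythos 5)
Your proof is correct and follows essentially the same route as the paper: Cauchy--Schwarz on the DMI term combined with the Poincar\'e inequality for $m'\in H^1_0(\Omega;\R^2)$ (available because $m=-e_3$ on $\partial\Omega$) gives \eqref{eq:Eapriori}, and then \eqref{eq:Dirichlet_excess} follows by combining \eqref{eq:Eapriori} with $\mathcal{E}_\kappa(m)<8\pi$ and $1-C\kappa>\tfrac12$; your algebraic rearrangement differs only cosmetically from the paper's (which first deduces $\int_\Omega|\nabla m|^2<16\pi$ and then substitutes back).
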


Next, we show by a construction that the infimum energy is strictly
below the topological lower bound for the case of the pure Dirichlet
energy. 

\begin{lemma}\label{lemma:construction}
	For all $\kappa >0 $ we have
	\begin{align}
          \label{eq:Eless8pi}
		\inf_{\mathcal{A}} \mathcal{E}_\kappa < 8\pi.
	\end{align}
	In particular, the restricted admissible sets
        $\mathcal{A}_\kappa$, see definition
        \eqref{def:admissible_restricted}, are non-empty.
        Furthermore, there exist constants $C>0$ and $\kappa_0>0$
        depending only on $\Omega$ such that for all
        $\kappa\in (0,\kappa_0)$, we have
	\begin{align}
          \inf_{\mathcal{A}} \mathcal{E}_\kappa \leq  8\pi  - C \kappa ^2.
	\end{align}        
\end{lemma}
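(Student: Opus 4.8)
The plan is to exhibit an explicit competitor $m_\kappa \in \mathcal A$ built from a truncated Belavin--Polyakov profile and to estimate $\mathcal E_\kappa(m_\kappa)$ from above. Fix a center $a \in \Omega$ and a small radius $\rho>0$ (to be optimized at the end in terms of $\kappa$), and consider the N\'eel-type profile $\phi(x) = \Phi(\rho^{-1}(x-a))$, which is a degree $+1$ minimizing harmonic map with $\int_{\R^2} |\nabla \phi|^2 = 8\pi$ and $\int_{\R^2} \phi' \cdot \nabla \phi_3 = 4\pi$. The issue is that $\phi$ does not satisfy the Dirichlet condition $m = -e_3$ on $\partial\Omega$; away from $a$ one has $\phi(x)+e_3 \approx \bigl(-2(x-a)/|x-a|^2 \cdot \rho, \, O(\rho^2/|x-a|^2)\bigr)$, so the in-plane deviation decays like $\rho/|x-a|$. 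To fix this I would interpolate to $-e_3$ on an annulus at some intermediate scale $\sigma$ with $\rho \ll \sigma \ll 1$: on $B_\sigma(a)$ keep $\phi$ (rescaled so its values on $\partial B_\sigma$ are close to $-e_3$), then cut off smoothly to $-e_3$ on $B_{2\sigma}(a)\setminus B_\sigma(a)$, renormalizing to take values in $\Sphere^2$. One must check the resulting map still has $\mathcal N = 1$; this is automatic since the modification happens in a region where $m$ stays in a hemisphere, so no degree is created or destroyed, and outside $B_{2\sigma}(a)$ the map is the constant $-e_3$, hence lies in $\mathcal A$ provided $B_{2\sigma}(a) \subset \Omega$ (true for small $\sigma$).

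Next I would estimate the three energy contributions. The Dirichlet energy splits as $\int_{B_\sigma} |\nabla\phi|^2 = 8\pi - \int_{\R^2\setminus B_\sigma}|\nabla\phi|^2 = 8\pi - O(\rho^2/\sigma^2)$, plus the cut-off region contributes $\int_{B_{2\sigma}\setminus B_\sigma} |\nabla m|^2 = O(\rho^2/\sigma^2)$ (since on that annulus $|\nabla m| \lesssim |\nabla \phi| + \sigma^{-1}|\phi + e_3| \lesssim \rho/\sigma^2$ and the area is $O(\sigma^2)$). So the Dirichlet part is $8\pi + O(\rho^2/\sigma^2)$. For the DMI term, $-2\kappa \int_\Omega m' \cdot \nabla m_3$: on $B_\sigma$ this is $-2\kappa \int_{B_\sigma} \phi'\cdot\nabla\phi_3 = -8\pi\kappa + 2\kappa\int_{\R^2\setminus B_\sigma}\phi'\cdot\nabla\phi_3$, and the latter tail integral is $O(\kappa \rho^2/\sigma^2)$ by the same decay; on the cut-off annulus the DMI contribution is bounded by $2\kappa \int |\nabla m| \lesssim \kappa \cdot (\rho/\sigma^2)\cdot \sigma^2 = O(\kappa\rho)$, which is negligible compared to $\kappa\rho$ being... actually I should be careful and compare all errors. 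Collecting, $\mathcal E_\kappa(m_\kappa) \leq 8\pi - 8\pi\kappa + O(\rho^2/\sigma^2) + O(\kappa\rho)$. Wait — the leading DMI gain is $-8\pi\kappa$, which is $O(\kappa)$, not $O(\kappa^2)$; to get $\inf \mathcal E_\kappa < 8\pi$ this already suffices for any fixed small $\rho$, $\sigma$, proving \eqref{eq:Eless8pi}. The point of the $C\kappa^2$ refinement is more subtle: the scaling $-8\pi\kappa$ would suggest a better bound, but that term is linear in $\rho$ through the rescaling...

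Let me reconsider. The DMI term for $\phi = \Phi(\rho^{-1}(\cdot - a))$ is scale-invariant: $\int \phi'\cdot\nabla\phi_3 = 4\pi$ independent of $\rho$, so the gain is exactly $-8\pi\kappa + O(\kappa\rho^2/\sigma^2)$ regardless of $\rho$. Hmm, but then $\mathcal E_\kappa(m_\kappa) - 8\pi \leq -8\pi\kappa + O(\rho^2/\sigma^2)$, and optimizing (say $\sigma$ fixed, $\rho \to 0$) gives $-8\pi\kappa$, which is far better than $-C\kappa^2$. This cannot be right for matching the $\Gamma$-limit, so I must be miscounting: the correct competitor should not be the bare truncated BP profile but must account for the fact that the constraint $m \in \mathcal A$ with the genuine far-field tail (decaying like $1/|x|$ over all of $\Omega$, not cut off at scale $\sigma$) forces the Dirichlet excess up by $\Theta(\rho^2)$ — i.e. one should take $\sigma$ of order $1$ (the domain scale) and use the harmonic tail correction $w$ with $\int|\nabla w|^2 \sim \rho^2 T(a)$. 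Then the Dirichlet excess is $+\rho^2 T(a) + o(\rho^2)$, the DMI gain is $-8\pi\kappa\cdot(\text{something})$... Actually the clean way: use the genuine ansatz $m_\kappa = \phi + w$ (renormalized) where $w$ solves the tail problem, giving $\mathcal E_\kappa(m_\kappa) - 8\pi = \rho^2 T(a) - 8\pi \rho \kappa \cdot \frac{?}{}$ — matching \eqref{eq:E0} with $r_0 = \rho/\kappa$, this is $\kappa^2[r_0^2 T(a) - 8\pi r_0]$, minimized over $r_0 = 4\pi/T(a)$ to give $-16\pi^2\kappa^2/T(a) = -C\kappa^2$. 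So the main obstacle, and the thing to get right, is the bookkeeping of the tail: one must choose $\rho = r_0\kappa$ with $r_0 = 4\pi/T(a_0)$, build the renormalized competitor $m_\kappa$ from $\Phi(\rho^{-1}(\cdot-a_0))$ plus the harmonic extension of its far-field, verify it lies in $\mathcal A$ (degree, $\Sphere^2$-constraint via renormalization, boundary values), and show the cross terms and renormalization errors are $o(\kappa^2)$. The first inequality \eqref{eq:Eless8pi} falls out of the same computation (or the cruder $-8\pi\kappa$ estimate) for every $\kappa>0$, not just small ones, by taking $\rho$ small and $\sigma$ of order the inradius; the hard part is the $o(\kappa^2)$ control of the renormalization in the refined estimate, which I expect to be the main technical step and likely mirrors the recovery-sequence construction in the proof of Theorem~\ref{thm:convergence}(iii).
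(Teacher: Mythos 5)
There is a genuine error at the pivot point of your argument: the claim that the DMI integral is scale-invariant. It is not. The DMI term contains only one derivative, so under the rescaling $\phi(x)=\Phi(\rho^{-1}(x-a))$ one has
\begin{align}
\int_{\R^2}\phi'\cdot\nabla\phi_3\intd x \;=\; \rho\int_{\R^2}\Phi'\cdot\nabla\Phi_3\intd x \;=\; 4\pi\rho,
\end{align}
so the DMI gain of your truncated competitor is $-8\pi\kappa\rho$, linear in $\rho$, not $-8\pi\kappa$. With this corrected, your \emph{first} construction already works and there is no paradox to resolve: keeping the profile on $B_\sigma(a)$ with $B_{2\sigma}(a)\subset\Omega$ (so $\sigma$ at most of order the inradius) and cutting off to $-e_3$ on the annulus gives
$\mathcal E_\kappa(m_{\rho,\sigma})\le 8\pi-8\pi\kappa\rho+C\rho^2/\sigma^2$ (plus DMI errors of lower order), which is $<8\pi$ for every $\kappa>0$ upon choosing $\rho$ small, and optimizing $\rho\sim\kappa\sigma^2$ with $\sigma$ comparable to the inradius yields $8\pi-C\kappa^2$. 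This is exactly the paper's proof (truncation of $\Phi$ at scale $L=\sigma/\rho$, taking $\rho$ maximal so that $2L\rho$ equals the inradius, then optimizing $L\sim 1/(r\kappa)$), and it also explains why the answer is consistent with the $\Gamma$-limit: the linear-in-$\rho$ DMI gain balanced against the quadratic-in-$\rho$ excess produces the $\kappa^2$ scale.

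Because of the erroneous scale invariance you abandon this correct and elementary route and instead propose the full recovery-sequence ansatz $\phi+\text{harmonic tail}$ with $\rho=r_0\kappa$, $r_0=4\pi/T(a_0)$. That route would indeed prove a sharper bound ($8\pi-16\pi^2\kappa^2/T(a_0)+o(\kappa^2)$), but it is substantially harder than what the lemma requires, and in your proposal its key steps are only announced, not executed: verifying that the renormalized map stays on $\Sphere^2$ and in $\mathcal A$ with degree $1$, and showing the cross terms and renormalization errors are $o(\kappa^2)$ (these are precisely the content of Lemma \ref{lemma:skyrmion_tail}, Lemma \ref{lemma:lower_bound_DMI} and Proposition \ref{prop:upper_bound} in the paper, which cannot be invoked here since Lemma \ref{lemma:construction} is used earlier, e.g.\ in the existence proof). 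As written, the proposal therefore has a gap; fixing the DMI scaling and completing your original truncation argument closes it and reproduces the paper's proof.
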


\begin{proof}[Proof of Lemma \ref{lemma:a_priori}]
  For $\m \in H^1(\Omega; \Sphere^2)$ satisfying $m = -e_3$ on
  $\partial \Omega$ we have by Cauchy-Schwarz and Poincar\'e
  inequalities
	\begin{align}\label{eq:CS_lower_bound}
		\begin{split}
                  \mathcal{E}_\kappa(\m) & \geq \int_{\Omega} |\nabla
                  \m |^2 \intd x - 2 \kappa \left( \int_\Omega |m'|^2
                    \intd x \int_{\Omega} |\nabla \m_3|^2 \intd x
                  \right)^\frac{1}{2}\\
                  & \geq \int_{\Omega} |\nabla \m |^2 \intd x - C
                  \kappa \left( \int_\Omega |\nabla m'|^2 \intd x
                    \int_{\Omega} |\nabla \m_3|^2 \intd x
                  \right)^\frac{1}{2},
		\end{split}
	\end{align}
        from which \eqref{eq:Eapriori} follows. Furthermore, under the
        assumption $m\in \mathcal{A}_\kappa$ we have
        $\mathcal{E}_\kappa(\m) < 8\pi$, so combining this with
        \eqref{eq:Eapriori} and a bound on $\kappa$ we obtain
        $\int_\Omega |\nabla m|^2 \intd x <16 \pi$. Using this fact
        together with \eqref{eq:Eapriori}, we obtain
        \eqref{eq:Dirichlet_excess}.
        \end{proof}

\begin{proof}[Proof of Lemma \ref{lemma:construction}]
	\textit{Step 1: Truncation of the Belavin-Polyakov profile}
	
	We truncate the standard Belavin-Polyakov profile
	by choosing $L>1$ and setting
	\begin{align}
		f_L(r) := \begin{cases}
			\frac{2r}{1+r^2} & \text{ if } r<L,\\
			\frac{2}{1+L^2}(2L-r) & \text{ if } L \leq r <2L,\\
			0 & \text{ if } 2L \leq r,
		\end{cases}
	\end{align}
	for $r >0$ and
	\begin{align}
          \bp_L(x) := \begin{pmatrix}
            - f_L(|x|) \frac{x}{|x|}, \operatorname{sign}(1-|x|)
            (1-f_L^2(|x|))^\frac{1}{2} 
          \end{pmatrix}
	\end{align}
	for $x\in \R^2$.

	One may then compute, see \cite[equation (A.66)]{bernand2019quantitative}, that
	\begin{align}
		|\nabla \bp_L|^2(x) = \frac{|f'_L|^2(|x|)}{1-f_L^2(|x|)} + \frac{f_L^2(|x|)}{|x|^2},
	\end{align}
	so that
	\begin{align}
		\int_{B_L(0)} |\nabla \bp_L|^2(x) \intd x = \frac{8\pi L^2}{1+L^2}
	\end{align}
	and
	\begin{align}
			|\nabla \bp_L|^2(x) \leq C L^{-4}
	\end{align}
        for all $x \in B_{2L}(0) \setminus B_L(0)$.  Consequently, we
        have
	 \begin{align}
           \int_{B_{2L}(0)} |\nabla \bp_L|^2 \intd x \leq  8\pi + C L^{-2}.
	 \end{align}
         Furthermore, as can be seen by a direct computation we have
 	\begin{align}
          -2  \int_{B_{2L}(0)}  \bp_L' \cdot \nabla \bp_{L,3}
          \intd x \leq  - 8\pi  + C L^{-2}.
 	\end{align}
 
 	\textit{Step 2: Construction of competitors}
 	
	Let $r>0$ be the in-radius.  Without loss of generality, we
        may assume that the corresponding in-circle is centered at the
        origin, so that $B_r(0) \subset \Omega$.  Let now $\rho >0$
        and $L>1$ be such that $2L \rho \leq r$.  Then the function
        $\phi_{\rho,L}(x):= \Phi_L(\rho^{-1}x)$ satisfies
        $\phi_{\rho,L} \in \mathcal{A}$.  For $\kappa < 1$ we compute
 	\begin{align}
          \mathcal{E}_\kappa(\mathbf{\phi}_{\rho,L}) \leq  8\pi - 8
          \pi \kappa \rho + CL^{-2}.
 	\end{align}
 	To minimize the preceding expression, we need to choose $\rho$
        as big as possible, i.e., $\rho = \frac{r}{2L}$. This yields
 	\begin{align}
          \mathcal{E}_\kappa(\mathbf{\phi}_{\rho,L}) \leq  8\pi - 4\pi
          \kappa r L^{-1} + 
          CL^{-2}. 
 	\end{align}
 	In particular, choosing $L$ big enough, we obtain that
 	\begin{align}
 		\mathcal{E}_\kappa(\mathbf{\phi}_{\rho,L}) < 8\pi,
 	\end{align}
 	which yields \eqref{eq:Eless8pi}. Furthermore, optimizing in
        $L$ gives $L = c / ( r \kappa )$ for some suitably chosen
        $c > 0$ depending only on $\Omega$, so that $L > 1$ for
        $\kappa < c / r$ and
 	\begin{align}
 		\mathcal{E}_\kappa(\mathbf{\phi}_{\rho,L}) \leq 8\pi -
          C r^2 \kappa^2, 
 	\end{align}
        which completes the proof.
\end{proof}

\begin{proof}[Proof of Theorem \ref{prop:existence}]
  Let $(\m_n) \in \mathcal{A}$ be a minimizing sequence.  By Lemma
  \ref{lemma:a_priori}, assuming that $\kappa$ is small enough, we get
  that $(\m_n)$ is uniformly bounded in $H^1(\Omega;\mathbb{S}^2)$.
  Consequently, there exists a subsequence (not relabeled) and
  $\m_\infty \in H^1(\Omega; \mathbb{S}^2)$ such that
  $\m_n \to \m_\infty $ in $L^2$ and
  $\nabla \m_n \rightharpoonup \nabla \m_\infty$ in $L^2$ as
  $n \to \infty$.  Furthermore, by a weak-times-strong argument, we
  get
  $\int_{\R^2} \m_n' \cdot \nabla m_{n,3} \intd x \to \int_{\R^2}
  \m_\infty' \cdot \nabla m_{\infty,3} \intd x$ and, therefore, we
  have
  \begin{align}    
    \mathcal{E}_\kappa(\m_\infty) \leq \liminf_{n\to \infty} 
    \mathcal{E}_\kappa (\m_n) = \inf_{\mathcal A} \mathcal E_\kappa. 
  \end{align}
  Thus, it remains to prove that $m_\infty \in \mathcal A$, i.e., that
  $\mathcal N(m_\infty) = 1$.
        
  Arguing as in \cite{brezis1983large,melcher14}, we complete the
  squares to get for all $\m \in H^1(\Omega; \mathbb{S}^2)$ that
  \begin{align}
    \label{eq:squares}
    \int_\Omega |\nabla \m|^2 \intd x  \pm 8\pi \mathcal N (\m) 
    &= \int_\Omega |\partial_1 \m \mp \m \times \partial_2 \m|^2 \intd x.
  \end{align}
  As a result, by the lower semicontinuity of the right-hand side in
  \eqref{eq:squares} and the continuity of the DMI term we have
  \begin{align}
    \mathcal{E}_\kappa(\m_\infty) \pm 8\pi
    \mathcal{N}(\m_\infty) \leq \liminf_{n \to \infty} \left( 
    \mathcal{E}_\kappa (\m_n) \pm 8\pi \mathcal{N}(\m_n) \right) = 
    \liminf_{n\to \infty} \mathcal{E}_\kappa (\m_n) \pm 8\pi. 
  \end{align}
  Therefore, for small enough $\kappa$ we get with the help of Lemmas
  \ref{lemma:a_priori} and \ref{lemma:construction}:
  \begin{align}
    \pm 8\pi \mathcal{N}(\m_\infty) \leq
    \mathcal{E}_\kappa(\m_\infty) \pm 8\pi 
    \mathcal{N}(\m_\infty) \leq  \liminf_{n\to \infty}
    \mathcal{E}_\kappa (\m_n) 
    \pm 8\pi < 8 \pi  \pm 8\pi, 
  \end{align}
  from which $\mathcal{N}(\m_\infty) = 1$ immediately follows.
\end{proof}

\section{The next-order $\Gamma$-limit}
\label{sec:Gamma}

\subsection{Preliminaries}

Before we turn to the actual proof of Theorem \ref{thm:convergence},
we establish a number of preliminary statements designed to provide
compactness in BP-convergence.  First, we prove that in fact the
skyrmion center $ a_\kappa$ for $0 < \kappa \ll 1$ satisfies
${ a_\kappa } \in \Omega$, as well as a lower bound for the Dirichlet
excess of a minimizer in terms of its radius $\rho$ and
$\operatorname{dist}({ a_\kappa}, \R^2 \setminus \Omega)$.  The idea
is that for $\phi$ achieving the Dirichlet distance from $m$ to
$\mathcal{B}$ we have $\nabla( m - \phi)(x) = - \nabla \phi(x)$ for
all $x\in \R^2\setminus \Omega$, so that the control of Theorem
\ref{thm:rigidity} can be translated into a control over the radius
and the center.

\begin{lemma}\label{lemma:lower_bound_dirichlet}
  There exist $\kappa_0>0$ and $C, C' >0$ depending only on $\Omega$
  such that for all $0 < \kappa <\kappa_0$ the following statement
  holds:

  Let $m\in \mathcal{A}_\kappa$ and let
  $\phi(x)= R \Phi (\rho^{-1}(x- a))$ with $R \in SO(3)$, $\rho >0$,
  $a \in \R^2$ achieve the Dirichlet distance from $\mathcal B$ to
  $m$.  Then we have $a \in \Omega$ and
	\begin{align}\label{eq:radius_vs_distance}
          \frac{\rho^2}{\operatorname{dist}^2 (a, \R^2
          \setminus \Omega) } \leq C Z(\m) \leq C' \kappa. 
	\end{align}	
\end{lemma}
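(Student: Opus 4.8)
The plan rests on the identity $\nabla(m-\phi)=-\nabla\phi$ on $\R^2\setminus\Omega$, which holds because $m\equiv-e_3$ there, combined with the rigidity estimate \eqref{eq:rigidity}. Since $\phi$ realizes the Dirichlet distance, Theorem~\ref{thm:rigidity} and Lemma~\ref{lemma:a_priori} give, for $\kappa<1/(2C)$,
\begin{align}
  \int_{\R^2\setminus\Omega}|\nabla\phi|^2\intd x
  =\int_{\R^2\setminus\Omega}|\nabla(m-\phi)|^2\intd x
  \le D^2(m;\mathcal B)\le\eta^{-1}Z(m)\le 16\pi C\eta^{-1}\kappa .
\end{align}
Recalling that $|\nabla\Phi|^2(y)=8(1+|y|^2)^{-2}$ (as in the proof of Lemma~\ref{lemma:construction}), a rescaling yields $|\nabla\phi|^2(x)=8\rho^2(\rho^2+|x-a|^2)^{-2}$, so that $\int_{B_r(b)}|\nabla\phi|^2\intd x\le\int_{B_r(a)}|\nabla\phi|^2\intd x=8\pi r^2(\rho^2+r^2)^{-1}$ for every $b\in\R^2$ and $r>0$, the inequality holding since $|\nabla\phi|^2$ is radially decreasing about $a$. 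All of the quantitative information will come from these facts together with the uniform exterior cone condition enjoyed by a bounded Lipschitz domain: there are $\alpha_0,h_0>0$ depending only on $\Omega$ such that at every $q\in\partial\Omega$ some open circular sector $S_q$ of vertex $q$, half-angle $\alpha_0$ and height $h_0$ lies in $\R^2\setminus\Omega$.

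First I would show the radius is small. Fixing $R_0$ with $\Omega\subset B_{R_0}(0)$, the estimates above give $8\pi-16\pi C\eta^{-1}\kappa\le\int_\Omega|\nabla\phi|^2\intd x\le\int_{B_{R_0}(0)}|\nabla\phi|^2\intd x\le 8\pi R_0^2(\rho^2+R_0^2)^{-1}$, which rearranges to $\rho^2\le C\,Z(m)\le C'\kappa$ for $\kappa$ small. Next, to prove $a\in\Omega$, I argue by contradiction: if $a\in\R^2\setminus\overline\Omega$ then $B_{t_0}(a)\subset\R^2\setminus\Omega$ with $t_0:=\operatorname{dist}(a,\Omega)$, and the first display forces $8\pi t_0^2(\rho^2+t_0^2)^{-1}\le 16\pi C\eta^{-1}\kappa$, hence $t_0\le\rho$. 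Consequently, in all cases $a\in\R^2\setminus\Omega$ the nearest boundary point $q$ satisfies $|a-q|\le\rho$, and since $|x-a|\le|x-q|+\rho$ on $S_q$ and $\rho\le h_0/3$ for small $\kappa$ (by the radius bound), a direct evaluation of $\int_{S_q}|\nabla\phi|^2\intd x$ in polar coordinates about $q$ produces a lower bound $\ge c(\Omega)>0$, contradicting the first display once $\kappa$ is small.

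It then remains to establish the quantitative inequality. With $a\in\Omega$, set $d:=\operatorname{dist}(a,\R^2\setminus\Omega)=\operatorname{dist}(a,\partial\Omega)>0$. If $d\ge h_0/4$, the radius bound alone gives $\rho^2/d^2\le 16\rho^2/h_0^2\le C\,Z(m)$. If $d<h_0/4$, pick $q\in\partial\Omega$ with $|a-q|=d$; on $S_q\subset\R^2\setminus\Omega$ one has $|x-a|\le|x-q|+d$, and restricting the radial integral about $q$ to $[2d,h_0]$ (where $|x-q|-d\ge\tfrac12|x-q|$, and using $\rho^2\le h_0^2$ from the radius bound) gives $\int_{\R^2\setminus\Omega}|\nabla\phi|^2\intd x\ge c(\Omega)\,\rho^2(\rho^2+d^2)^{-1}$. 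Combining with the first display, $\rho^2(\rho^2+d^2)^{-1}\le C\,Z(m)$, and since $Z(m)$ is small this rearranges to $\rho^2/d^2\le C'\,Z(m)\le C''\kappa$, which is \eqref{eq:radius_vs_distance}.

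I expect the main obstacle to be the geometric bookkeeping near $\partial\Omega$. The sharp power $\rho^2/d^2$ only appears if one can locate, inside $\R^2\setminus\Omega$, a region of area comparable to $d^2$ at distance comparable to $d$ from $a$ on which $|\nabla\phi|$ is of order $\rho/d^2$; the uniform exterior cone furnished by the Lipschitz hypothesis is precisely what provides such a region at every scale up to $h_0$, and the same mechanism, run at the fixed scale $h_0$, is what forbids the skyrmion center from escaping to $\partial\Omega$ or beyond.
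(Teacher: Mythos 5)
Your proposal is correct and follows essentially the same strategy as the paper: since $\phi$ realizes the Dirichlet distance and $m\equiv -e_3$ outside $\Omega$, the rigidity estimate \eqref{eq:rigidity} together with \eqref{eq:Dirichlet_excess} bounds $\int_{\R^2\setminus\Omega}|\nabla\phi|^2\intd x$ by $C Z(m)\le C'\kappa$, and explicit integrals of the Belavin--Polyakov density over exterior regions supplied by the Lipschitz (uniform exterior cone) geometry then give the radius bound, $a\in\Omega$, and \eqref{eq:radius_vs_distance}. Your geometric bookkeeping is slightly streamlined but equivalent: the rearrangement (bathtub) bound $\int_{B_{R_0}(0)}|\nabla\phi|^2\intd x\le\int_{B_{R_0}(a)}|\nabla\phi|^2\intd x$ replaces the paper's convex-hull/half-plane step, and keeping the exterior cone at the nearest boundary point $q$ with the triangle inequality $|x-a|\le|x-q|+d$ replaces the paper's construction of a second cone with vertex at $a$.
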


We now record some basic estimates for the skyrmion tail. As we wish
to apply this result also in the construction of a recovery sequence,
we take care to only assume $m \in \mathcal{A}$, not
$m \in \mathcal{A}_\kappa$.

\begin{lemma}\label{lemma:skyrmion_tail}
  There exists a constant $C>0$ only depending on $\Omega$ such that
  we have the following statement: 
  
  Let $\phi = R \Phi (\rho^{-1}(\bullet-a))$ with $R \in SO(3)$,
  $\rho \in \left( 0,\frac{1}{2} \right)$, ${ a} \in \Omega$.  Then,
  for all $x\in \R^2\setminus \Omega$ we have
	\begin{align}
          \left| \frac{1}{\rho} (\phi(x) - Re_3) - 2R \left( \frac{x-{
          a}}{\abs{x-{ a}}^2} , 0\right) \right|
          & \leq C
            \frac{\rho}{|x-{
            a}|^2}, 
            \label{eq:convergence_tail_exterior}\\
          \left| \frac{1}{\rho}  \nabla \phi (x) - 2\nabla \left( R
          \left( \frac{x-{ a}}{\abs{x-{ a}}^2} , 0\right)
          \right)\right|
          & \leq  \frac{\rho}{|x-{ a}|^3} 
		\label{eq:convergence_tail_gradient_exterior}
	\end{align}
	Let furthermore $m\in \mathcal{A}$. For $w:= m +e_3 - \phi
        -Re_3$ we have
	\begin{align}
          \int_{\Omega} |w|^2 \intd x
          & \leq C \left( \rho^2 + \int_{\R^2} |\nabla (m-\phi) |^2
            \intd x \right). \label{eq:L2_estimate_tail_upper} 
	\end{align}
\end{lemma}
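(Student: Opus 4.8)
The two exterior estimates \eqref{eq:convergence_tail_exterior} and \eqref{eq:convergence_tail_gradient_exterior} are purely algebraic: they use only the explicit formula for $\Phi$ and the fact that $R\in SO(3)$ is a linear isometry, so there is no real obstacle there. The genuine content is the $L^{2}$-bound \eqref{eq:L2_estimate_tail_upper}. Its difficulty is that $w$ is \emph{not} controlled on $\partial\Omega$ uniformly in the position of $a$ -- it fails to be small precisely when $a$ approaches $\partial\Omega$ -- and since we only assume $m\in\mathcal A$ (not $m\in\mathcal A_\kappa$) we cannot invoke Lemma~\ref{lemma:lower_bound_dirichlet} to keep $a$ away from $\partial\Omega$; hence a Poincar\'e inequality with vanishing boundary trace is not available. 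The plan is instead to anchor a Poincar\'e inequality on a \emph{fixed} ball $B^{*}$ in the exterior of $\Omega$, on which $w$ agrees with the explicit field $-(\phi+Re_{3})$ and is pointwise of size $O(\rho)$; this treats all positions of $a$ simultaneously and keeps every constant dependent on $\Omega$ alone.

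For \eqref{eq:convergence_tail_exterior} I would substitute $y:=\rho^{-1}(x-a)$ into $\phi(x)+Re_{3}=R\bigl(\Phi(y)+e_{3}\bigr)$, use $\Phi(y)+e_{3}=\tfrac{2}{1+|y|^{2}}(-y,1)$, and collect powers of $\rho$ to reach the exact identity
\begin{align}
  \tfrac{1}{\rho}\bigl(\phi(x)+Re_{3}\bigr)=\frac{2}{\rho^{2}+|x-a|^{2}}\,R\bigl(a-x,\rho\bigr),\qquad x\in\R^{2}.
\end{align}
As $\rho\to0$ this converges to $\tfrac{2}{|x-a|^{2}}R(a-x,0)=-2R\bigl(\tfrac{x-a}{|x-a|^{2}},0\bigr)$, and writing $R(a-x,\rho)=R(a-x,0)+\rho Re_{3}$ the difference with this limit equals $\tfrac{2\rho}{\rho^{2}+|x-a|^{2}}Re_{3}-\tfrac{2\rho^{2}}{|x-a|^{2}(\rho^{2}+|x-a|^{2})}R(a-x,0)$, whose norm is at most $\tfrac{2\rho}{\rho^{2}+|x-a|^{2}}+\tfrac{2\rho^{2}}{|x-a|(\rho^{2}+|x-a|^{2})}\le C\rho/|x-a|^{2}$, using $\rho^{2}+|x-a|^{2}\ge|x-a|^{2}$ for the first term and $2\rho|x-a|\le\rho^{2}+|x-a|^{2}$ for the second. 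For \eqref{eq:convergence_tail_gradient_exterior} I would differentiate the same rational expression and its limit by the product rule; this is slightly longer -- one also uses $|x-a|^{4}-(\rho^{2}+|x-a|^{2})^{2}=-\rho^{2}(\rho^{2}+2|x-a|^{2})$ together with $\rho^{2}+2|x-a|^{2}\le2(\rho^{2}+|x-a|^{2})$ -- but every term is again controlled by the same two elementary inequalities and one gains the extra factor $|x-a|^{-1}$ from differentiating, giving the claimed $O(\rho/|x-a|^{3})$ bound. No lower bound on $|x-a|$ is ever used, so both estimates hold on all of $\R^{2}\setminus\Omega$ with universal constants.

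For \eqref{eq:L2_estimate_tail_upper}, set $E:=\int_{\R^{2}}|\nabla(m-\phi)|^{2}\intd x$. Since $e_{3}$ and $Re_{3}$ are constant and $m\equiv-e_{3}$ on $\R^{2}\setminus\Omega$, one has $\nabla w=\nabla(m-\phi)\in L^{2}(\R^{2})$, $w$ is bounded (as $|m|=|\phi|=1$) so $w\in H^{1}_{\mathrm{loc}}(\R^{2})$, and $w=-(\phi+Re_{3})$ on $\R^{2}\setminus\Omega$. I would fix, depending only on $\Omega$, a ball $B^{*}$ with $\overline{B^{*}}\cap\overline\Omega=\emptyset$ -- so that $\delta:=\operatorname{dist}(\overline{B^{*}},\overline\Omega)>0$ -- and a larger ball $U\supset\overline\Omega\cup\overline{B^{*}}$. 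On $B^{*}$, since $R$ is an isometry and $|x-a|\ge\delta$ for $a\in\Omega$,
\begin{align}
  |w(x)|=\bigl|\Phi(\rho^{-1}(x-a))+e_{3}\bigr|=\frac{2\rho}{\sqrt{\rho^{2}+|x-a|^{2}}}\le\frac{2\rho}{\delta},
\end{align}
hence $\int_{B^{*}}|w|^{2}\intd x\le C\rho^{2}$. Finally the Poincar\'e inequality on the connected Lipschitz set $U$ anchored on $B^{*}$, namely $\bigl\| w-|B^{*}|^{-1}\int_{B^{*}}w\bigr\|_{L^{2}(U)}\le C\|\nabla w\|_{L^{2}(U)}$ (a standard consequence of the mean-value Poincar\'e inequality on $U$), gives
\begin{align}
  \int_{\Omega}|w|^{2}\intd x\le\int_{U}|w|^{2}\intd x\le C\Bigl(\|\nabla w\|_{L^{2}(U)}^{2}+\int_{B^{*}}|w|^{2}\intd x\Bigr)\le C(E+\rho^{2}),
\end{align}
which is \eqref{eq:L2_estimate_tail_upper}. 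As anticipated, this last step is the only delicate one: the exterior anchoring ball $B^{*}$ is what substitutes for the absent smallness of $w$ on $\partial\Omega$, makes the constant depend on $\Omega$ only, and removes any case distinction on $\operatorname{dist}(a,\partial\Omega)$.
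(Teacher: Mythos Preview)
Your proof is correct and follows essentially the same approach as the paper: the exterior estimates are handled by the same explicit computation, and the $L^{2}$ bound comes from a Poincar\'e-type inequality anchored in the exterior of $\Omega$, where $w=-(\phi+Re_{3})$ is explicitly $O(\rho)$. The only cosmetic difference is that the paper anchors on the circle $\partial B_{\operatorname{diam}(\Omega)}(a)$ (which contains $\Omega$ since $a\in\Omega$) via Friedrichs' inequality with a boundary trace term, whereas you anchor on a fixed exterior ball $B^{*}$ via the mean-value Poincar\'e inequality on a fixed $U$; both choices make the constant depend only on $\Omega$ and avoid any assumption on $\operatorname{dist}(a,\partial\Omega)$.
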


With these bounds we can give an estimate for the DMI term.  Again, we
only assume $m\in \mathcal{A}$ to keep the statement applicable for
the construction of the recovery sequence.

\begin{lemma}\label{lemma:lower_bound_DMI}
  Let $\kappa \in (0,1)$, $m \in \mathcal{A}$ and
  $\phi = R \Phi(\rho^{-1}(\bullet - a))$ with $R \in SO(3)$,
  $\rho \in \left( 0,\frac{1}{2} \right)$, $a \in \Omega$.  Then there
  exist $C_1 >0$ universal and $C_2 = C_2\left(\Omega, a \right)>0$
  such that the following holds:
	\begin{align}\label{eq:DMI_bounded}
          \left| \int_{\R^2} (R (\Phi+e_3))'\cdot \nabla (R
          \Phi )_3  \intd x \right| \leq C_1.
	\end{align}
	and
  \begin{multline}
    \label{eq:lower_bound_DMI_for_recovery}
    \qquad \left|-2\kappa \int_\Omega \m' \cdot \nabla m_{3} \intd x +
      2 \kappa \rho \int_{\R^2} (R (\Phi +e_3))' \cdot \nabla (R
      \Phi )_3 \intd x \right| \\
    \leq C_2 \left( \int_{\R^2}|
      \nabla (m- \phi)|^2\intd x + \rho^\frac{6}{5} \right)\kappa
    \qquad
	\end{multline}
	If additionally $m\in \mathcal{A}_\kappa$ and $\phi$ achieves
        the Dirichlet distance of $m$ to $\mathcal{B}$, then there
        exists $\kappa_0>0$ and $C_3 > 0$ depending only on $\Omega$
        such that for all $\kappa \in (0, \kappa_0)$ the estimate
        takes the form
	\begin{align}\label{eq:lower_bound_DMI}
          \left|-2\kappa \int_\Omega \m' \cdot \nabla
          m_{3} \intd x + 2 \kappa \rho \int_{\R^2} (R
          (\Phi +e_3))' \cdot \nabla (R
          \Phi )_3 \intd x \right| 
          \leq C_3 \left( Z (m) +
          \rho^\frac{6}{5} \right)\kappa.
	\end{align}
\end{lemma}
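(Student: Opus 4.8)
The plan is to establish the three estimates in turn: \eqref{eq:DMI_bounded} by an elementary computation with the explicit profile, \eqref{eq:lower_bound_DMI_for_recovery} by decomposing $m$ into the rescaled Belavin--Polyakov tail plus a remainder and estimating the resulting terms, and \eqref{eq:lower_bound_DMI} as a short deduction from \eqref{eq:lower_bound_DMI_for_recovery} using the rigidity estimate. For \eqref{eq:DMI_bounded}, since $\nabla(Re_3)_3 = 0$ the integrand equals $(R(\Phi+e_3))'\cdot\nabla (R(\Phi+e_3))_3$ and is therefore pointwise bounded, uniformly in $R$, by $\abs{\Phi+e_3}\,\abs{\nabla\Phi}$; using the explicit identities $\abs{\Phi+e_3}^2 = 4(1+\abs{x}^2)^{-1}$ and $\abs{\nabla\Phi}^2 = 8(1+\abs{x}^2)^{-2}$, this is at most $4\sqrt2\,(1+\abs{x}^2)^{-3/2}$, which integrates over $\R^2$ to the universal constant $C_1 = 8\sqrt2\,\pi$.

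For \eqref{eq:lower_bound_DMI_for_recovery} I would set $\chi := \phi + Re_3$ and recall from Lemma~\ref{lemma:skyrmion_tail} that $w = m + e_3 - \chi$, so that $m = \chi + w - e_3$ on $\Omega$, whence $m' = \chi'+w'$, $\nabla m_3 = \nabla\chi_3 + \nabla w_3$, and $\nabla w = \nabla m - \nabla\phi$ on $\R^2$. Expanding the product $m'\cdot\nabla m_3$ and applying the change of variables $y = \rho^{-1}(x-a)$, which turns $\int_{\R^2}\chi'\cdot\nabla\chi_3\intd x$ into $\rho\int_{\R^2}(R(\Phi+e_3))'\cdot\nabla(R\Phi)_3\intd y$, reduces the bracket in \eqref{eq:lower_bound_DMI_for_recovery} (up to the factor $2\kappa$) to
\[
  \mathrm{Err} := \int_\Omega\bigl(\chi'\cdot\nabla w_3 + w'\cdot\nabla\chi_3 + w'\cdot\nabla w_3\bigr)\intd x \;-\; \int_{\R^2\setminus\Omega}\chi'\cdot\nabla\chi_3\intd x ,
\]
so it remains to bound $\abs{\mathrm{Err}} \leq C(\Omega,a)\,(d^2 + \rho^{6/5})$ with $d := \norm{\nabla(m-\phi)}_{L^2(\R^2)}$. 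The exterior term is controlled directly by $\abs{\chi}\leq 2\rho/\abs{x-a}$ and $\abs{\nabla\chi}\leq 2\sqrt2\,\rho/\abs{x-a}^2$, giving at most $C\rho^2/\operatorname{dist}(a,\R^2\setminus\Omega)$, which is the only $a$-dependent contribution; the quadratic interior term is controlled by Cauchy--Schwarz together with $\norm{\nabla w}_{L^2(\Omega)}\leq d$ and the bound $\norm{w}_{L^2(\Omega)}^2 \leq C(\rho^2+d^2)$ of \eqref{eq:L2_estimate_tail_upper}, giving at most $C(\rho+d)d \leq C(\rho^2+d^2)$.

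The delicate point — the step I expect to be the main obstacle — is the two cross terms $\int_\Omega\chi'\cdot\nabla w_3$ and $\int_\Omega w'\cdot\nabla\chi_3$: being linear in the remainder $w$, the naive Cauchy--Schwarz bound only yields $O(\norm{w}_{L^2(\Omega)}) = O(\rho+d)$, which is far too weak. The resolution is to exploit that $\chi$ and $\nabla\chi$ concentrate at scale $\rho$ about $a$. From the explicit profile one has $\norm{\chi}_{L^2(\Omega)}^2 \leq 4\pi\rho^2\log\!\bigl(1+\operatorname{diam}(\Omega)^2/\rho^2\bigr) \leq C\rho^2\abs{\log\rho}$, and for $p\in(1,2)$, $\norm{\nabla\chi}_{L^p(\Omega)}\leq\norm{\nabla\phi}_{L^p(\R^2)} = C_p\,\rho^{2/p-1}$; taking $p = 5/4$ gives $\norm{\nabla\chi}_{L^{5/4}(\Omega)}\leq C\rho^{3/5}$. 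Pairing $\norm{\chi}_{L^2(\Omega)}$ with $\norm{\nabla w}_{L^2(\Omega)}\leq d$ via Cauchy--Schwarz, and $\norm{\nabla\chi}_{L^{5/4}(\Omega)}$ with $\norm{w}_{L^5(\Omega)}\leq C\norm{w}_{H^1(\Omega)}\leq C(\rho+d)$ via Hölder and the two-dimensional Sobolev embedding $H^1(\Omega)\hookrightarrow L^5(\Omega)$, and finally applying Young's inequality together with $\rho^2\abs{\log\rho}, \rho^{8/5}\leq\rho^{6/5}$ for $\rho<\tfrac12$, bounds both cross terms by $C(\Omega)(d^2+\rho^{6/5})$. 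Collecting the four contributions and multiplying by $2\kappa$ gives \eqref{eq:lower_bound_DMI_for_recovery}.

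Finally, for \eqref{eq:lower_bound_DMI}: when $\phi$ realizes the Dirichlet distance from $m$ to $\mathcal B$, one has $d^2 = D^2(m;\mathcal B)\leq\eta^{-1}Z(m)$ by Theorem~\ref{thm:rigidity} and \eqref{eq:rigidity}; moreover, for $\kappa$ below the threshold of Lemma~\ref{lemma:lower_bound_dirichlet} one has $a\in\Omega$, $\rho<\tfrac12$, and $\rho^2\leq C\operatorname{dist}^2(a,\R^2\setminus\Omega)\,Z(m)$ by \eqref{eq:radius_vs_distance}. This last bound absorbs the lone $a$-dependent term, since $\rho^2/\operatorname{dist}(a,\R^2\setminus\Omega) \leq \operatorname{diam}(\Omega)\cdot\rho^2/\operatorname{dist}^2(a,\R^2\setminus\Omega)\leq C(\Omega)Z(m)$, while the remaining constants already depend only on $\Omega$; hence $\abs{\mathrm{Err}}\leq C_3(\Omega)(Z(m)+\rho^{6/5})$, and multiplying by $2\kappa$ yields \eqref{eq:lower_bound_DMI}.
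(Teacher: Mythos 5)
Your proposal is correct and follows essentially the same route as the paper's proof: the same decomposition of $\m'\cdot\nabla m_3$ via $w = m + e_3 - \phi - Re_3$, the same treatment of the main term (rescaling plus the tail decay giving the $a$-dependent $\rho^2/\operatorname{dist}(a,\R^2\setminus\Omega)$ error), the logarithmic $L^2$ bound on $\Phi+e_3$ for one cross term, the $L^5$--$L^{5/4}$ H\"older/Sobolev argument with $\|\nabla\phi\|_{L^{5/4}}\lesssim\rho^{3/5}$ for the other, and \eqref{eq:L2_estimate_tail_upper} for the quadratic term, with \eqref{eq:lower_bound_DMI} then deduced from the rigidity estimate \eqref{eq:rigidity} and Lemma \ref{lemma:lower_bound_dirichlet}. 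The only differences (explicit constant $C_1=8\sqrt2\,\pi$, Cauchy--Schwarz plus Young in place of pointwise Young) are cosmetic.
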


\begin{proof}[Proof of Lemma \ref{lemma:lower_bound_dirichlet}]
  \textit{Step 1 : ${ a} \in \operatorname{conv}(\Omega)$, the convex
    envelope of $\Omega$.}
	
  Towards a contradiction, we assume that
  ${ a} \not \in \operatorname{conv}(\Omega)$.  Then there exists
  $n \in \Sphere^1$ such that $ { a} \cdot n \geq x \cdot n$ for all
  $x \in \Omega \subset \operatorname{conv}(\Omega)$.  Consequently,
  $\{ x \in \R^2 : (x-a)\cdot n > 0\} \subset {\R^2\setminus \Omega}$
  and by the estimates \eqref{eq:rigidity} and
  \eqref{eq:Dirichlet_excess}, recalling that $m(x)=-e_3$ for all
  $x\in{\R^2\setminus \Omega}$, we have
	\begin{align}
          4\pi = \int_{\{ (x- { a}) \cdot n > 0\} } |\nabla \phi |^2
          \intd x \leq \int_{\R^2\setminus\Omega} |\nabla \phi |^2
          \intd x \leq \int_{\R^2} |\nabla (\phi - \m) |^2 \intd x
          \leq C \kappa. 
	\end{align}
        For small enough $\kappa$ we have a contradiction.
		
	\textit{Step 2: There exist $C,C' >0$ such that
		\begin{align}\label{eq:rhok}
			\rho^2 \leq C Z(\m) \leq  C'  \kappa.
		\end{align}
              }We have
              $\operatorname{diam}(\Omega) =
              \operatorname{diam}(\operatorname{conv}(\Omega))$.
              Since from Step 1 we know that
              $a \in \operatorname{conv}(\Omega)$, it follows that
              $\Omega \subset B_{\operatorname{diam}(\Omega)} ( { a
              })$. By a direct computation (as in \cite[Equation
              (A.67)]{bernand2019quantitative}) and
              \eqref{eq:rigidity}, we have
	\begin{align}
	  \begin{split}
            \frac{8\pi}{1+\rho^{-2}\operatorname{diam}^2(\Omega) } &=
            \int_{\R^2\setminus
              B_{\rho^{-1}\operatorname{diam}(\Omega)} ( 0)} |\nabla
            \bp |^2 \intd x = \int_{\R^2\setminus
              B_{\operatorname{diam}(\Omega)} (
              a)} |\nabla \phi |^2 \intd x \\
            & \leq \int_{\R^2\setminus \Omega} |\nabla \phi |^2 \intd
            x \leq \int_{\R^2} |\nabla (\phi-m) |^2 \intd x \leq C
            Z(m).
	  \end{split}
	\end{align} 
	Therefore, together with \eqref{eq:Dirichlet_excess} and
        taking $\kappa$ small enough, we have
        $\rho^{2} \leq C Z(\m)\leq C' \kappa$.
	
	\textit{Step 3: ${ a} \in \Omega$, provided $\kappa$ is
            small enough.}
	
          Towards a contradiction, let us assume that
          ${ a } \not \in \Omega$. We claim that since $\Omega$ is a
          Lipschitz domain, there exist $\alpha>0$ and $\tilde r>0$
          depending only on $\Omega$ such that
          $\mathcal{C}_\alpha(a )\cap B_{\tilde r}(a ) \subset \R^2
          \setminus \Omega$, where $\mathcal{C}_\alpha(a )$ is a cone
          with vertex at $a$ and the opening angle $\alpha$.  Indeed,
          we may assume that $a$ is sufficiently close to $\Omega$,
          and near $a$ the set $\Omega$ is locally a subgraph of a
          Lipschitz function. Translating the point $a$ vertically
          down towards $\partial \Omega$, we obtain a cone
          $\mathcal C_\alpha(\tilde a)$ pointing up with
          $\tilde a \in \partial \Omega$ that lies above $\Omega$ in
          $B_r(\tilde a)$ for some $\alpha > 0$ and $r > 0$ depending
          only on $\Omega$. Hence the claim follows by translating the
          cone $\mathcal C_\alpha(\tilde a)$ vertically upward until
          its vertex coincides with $a$.

          We now compute (again, as in \cite[Equation
          (A.67)]{bernand2019quantitative}, and using
          \eqref{eq:rigidity} and \eqref{eq:Dirichlet_excess})
	\begin{align}\label{eq:aom}
		\begin{split}
                  \frac{\rho^{-2}\tilde r^2}{1+\rho^{-2}\tilde r^2 }
                  \leq C \int_{(\mathcal{C}_\alpha(a) - a) \cap
                    B_{\rho_\kappa^{-1}\tilde r}(0)} |\nabla \bp |^2
                  \intd x &= C \int_{\mathcal{C}_\alpha(a) \cap
                    B_{\tilde r}(a)} |\nabla
                  \phi|^2 \intd x \\
                  &\leq C \int_{\R^2\setminus \Omega} |\nabla \phi |^2
                  \intd x \leq C \kappa .
		\end{split}
	\end{align}
	By step 2, the left-hand side is uniformly bounded from
        below, giving a contradiction for $\kappa $ small enough.
	
	\textit{Step 4: We have estimate
          \eqref{eq:radius_vs_distance}.}
	
        Once again, since $\Omega$ is a Lipschitz domain there exist
        $\alpha>0$ and $\tilde r>0$ depending only on $\Omega$ such
        that for any $\hat x \in \partial \Omega$ we have
        $\mathcal{C}_\alpha(\hat x) \cap B_{\tilde r}(\hat x) \subset
        \R^2 \setminus \Omega$. As $a \in \Omega$, there is
        $\hat x \in \partial \Omega$ such that
        $|\hat x - { a } | = \operatorname{dist}({ a },\R^2\setminus
        \Omega)$. We next fix $\delta>0$ small enough (depending only
        on $\alpha$). If
        $ \operatorname{dist}({ a },\R^2\setminus \Omega) \geq \delta
        \tilde r$ then by the estimate \eqref{eq:rhok} shown in Step 2
        we have
        $\frac{\rho^2}{\operatorname{dist}^2 (a, \R^2 \setminus
          \Omega) } \leq \frac{C \kappa}{\delta^2 \tilde r^2} \leq C'
        \kappa $ with $C'$ depending only on $\Omega$.  If, on the
        other hand,
        $\operatorname{dist}({ a },\R^2\setminus \Omega) < \delta
        \tilde r$ (meaning $|\hat x-a|$ is very small comparing to
        $\tilde r$) then using basic geometry arguments we deduce that
        there exist a cone $ \mathcal{\hat C}_ \beta(a)$ with the
        opening angle $\beta > 0$, and a constant $c>0$ (both
        depending only on $\alpha$ and $\delta$) such that
        $(\mathcal{\hat C}_\beta(a) \cap B_{\frac{\tilde r}{2}}(a))
        \setminus B_{c|\hat x - { a }|}(a) \subset
        \mathcal{C}_\alpha(\hat x) \cap B_{\tilde r}(\hat x) \subset
        \R^2 \setminus \Omega$, see Figure \ref{fig:cones}.
        
        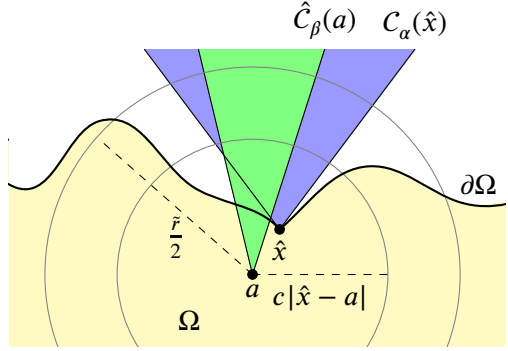
\begin{figure}
        	\centering
        		\begin{tikzpicture}[scale=1.2]
                          \clip (-3,2.7) -- (2.5,2.7) -- (2.5,-1.3) --
                          (-3,-1.3); \fill[yellow!30] (0 ,0) to [ out
                          angle=130,curve through={(-1,.5) .. (-2,1.2)
                            .. (-3,.5)}] (-3,1) -- (-3,-3) --
                          (0,-3)--(0,0); \fill[yellow!30] (0 ,0) to [
                          out angle=40,curve through={(1,.7) .. (2,.3)
                            .. (3,.5)}] (4,1) -- (4,-4) -- (0,-4) --
                          (0,0); \fill[blue!40] (-1.5,2) -- (0,0) --
                          (1.5,2) -- cycle; \fill[green!50] (-.9,2) --
                          (-.3,-.5) -- (.5,2) -- cycle; \filldraw
                          (-1.5,2) -- (0,0) circle (1.5pt) node[below]
                          {$\hat x$} -- (1.5,2) node[above]
                          {$\mathcal{C}_\alpha(\hat x)$}; \filldraw
                          (-.9,2) -- (-.3,-.5) circle (1.5pt)
                          node[below] {$a$} -- (.5,2) node[above]
                          {$\mathcal{\hat C}_\beta(a)$}; \draw[gray]
                          (-.3,-.5) circle (1.5); \draw[gray]
                          (-.3,-.5) circle (2.3); \draw[dashed]
                          (-.3,-.5) -- node[below] {$c |\hat x - a|$}
                          (1.2,-.5); \draw[dashed] (-.3,-.5) --
                          node[below] {$\frac{\tilde r}{2}$} (-2,1);
                          \draw[thick] (0 ,0) to [ out angle=130,curve
                          through={(-1,.5) .. (-2,1.2) .. (-3,.5)}]
                          (-3,1); \draw[thick] (0 ,0) to [ out
                          angle=40,curve through={(1,.7) .. (2,.3)
                            .. (3,.5)}] (-4,1); \node at (2.2,.5)
                          {$\partial \Omega$}; \node at (-1,-1)
                          {$\Omega$};
		\end{tikzpicture}
                \caption{\label{fig:cones} Sketch of $\Omega$ and the
                  cones $\mathcal{C}_\alpha(\hat x)$ and
                  $\mathcal{\hat C}_\beta(a)$. }
        \end{figure}

        Similarly to the previous
        calculations (see \eqref{eq:aom}), we obtain
	\begin{align}
          \frac{\rho^{-2}\frac{\tilde
          r^2}{4}}{1+\rho^{-2}\frac{\tilde r^2}{4}}
          -\frac{\rho^{-2} c^2|\hat x - { a }|^2}{1+\rho^{-2}
          c^2 |\hat x - {
          a}|^2 }   \leq C \int_{ ( \hat{\mathcal{C}}_\beta(a) \cap B_{\tilde
          r \over 2}(a) ) \setminus B_{c |\hat x - a|}(a)}
          |\nabla \phi|^2 \intd x  \leq CZ(\m).  
	\end{align}
	We further calculate
	\begin{align}
	  \begin{split}
            \frac{\rho^{-2}\frac{\tilde
                r^2}{4}}{1+\rho^{-2}\frac{\tilde r^2}{4}}
            -\frac{\rho^{-2}c^2|\hat x - { a }|^2}{1+\rho^{-2}
              c^2|\hat x - { a }|^2 } & = 1 -
            \frac{1}{1+\rho^{-2}\frac{\tilde r^2}{4}} -
            \frac{\rho^{-2} c^2|\hat x - { a }|^2}{1+\rho^{-2}
              c^2|\hat x -
              { a }|^2}\\
            & = \frac{1}{1+\rho^{-2} c^2|\hat x - { a }|^2} -
            \frac{1}{1+\rho^{-2}\frac{ \tilde r^2}{4}}\\
            & \geq \frac{1}{1+\rho^{-2} c^2|\hat x - { a}|^2} -\frac{4
              \rho^2}{\tilde r^2}.
	  \end{split}
	\end{align}
	By step 2, we know that $\rho^2 \leq CZ(\m)$ and, therefore,
        we have
	\begin{align}
          \frac{1}{1+\rho^{-2} c^2|\hat x - { a }|^2}  \leq  C Z(m).
	\end{align}
	Taking $\kappa$ small enough and recalling $Z(\m) \leq C
        \kappa$, we deduce that
	\begin{align}
          \frac{\rho^{2}}{\operatorname{dist}^2({ a },
          \R^2 \setminus \Omega)} \leq CZ(\m) \leq  C' \kappa,
	\end{align}
        as claimed.
\end{proof}

\begin{proof}[Proof of Lemma \ref{lemma:skyrmion_tail}]
  If $x\in\R^2\setminus \Omega$, then
  $w(x) = -R \left( \Phi(\rho^{-1} (x - a)) + e_3\right)$. It is
  straightforward to compute
  \begin{align}
    \frac{1}{\rho} \left( \Phi(\rho^{-1} (x - a)) + e_3\right)  =
    \left( -\frac{2(x-a)}{\rho^2+|x-a|^2}, \frac{2\rho}{\rho^2+|x-a|^2}
    \right). 
  \end{align}
Therefore, we have  
\begin{align}\label{eq:ident}
  \frac{1}{\rho} w(x)- 2R\left(\frac{x-a}{|x-a|^2},0 \right)= -R \left(
  \frac{2\rho^2 (x-a)}{(\rho^2+|x-a|^2)|x-a|^2},
  \frac{2\rho}{\rho^2+|x-a|^2} \right). 
\end{align}
Using the fact that $\rho^2+|x-a|^2\geq 2\rho |x-a|$ and the fact that
$R \in SO(3)$, we obtain \eqref{eq:convergence_tail_exterior}. Taking
the gradient of both parts of \eqref{eq:ident} we arrive at
\eqref{eq:convergence_tail_gradient_exterior} in a similar way.

For $x \in \R^2 \setminus B_{\operatorname{diam}(\Omega)}(a)$,
  from estimate \eqref{eq:convergence_tail_exterior} we get
	\begin{align}\label{eq:estimate_w}
		| w(x)| \leq  {C \rho},
	\end{align}
        Therefore, with the help of Friedrichs' inequality
        \cite[Corollary 6.11.2]{mazya} we obtain
	\begin{align}
	  \begin{split}
            \int_{\Omega} |w|^2 \intd x & \leq
            \int_{B_{\operatorname{diam}(\Omega)}(a)} |w|^2 \intd x
            \leq C \left( \int_{B_{\operatorname{diam}(\Omega)}(a)}
              |\nabla w|^2 \intd x +
              \int_{\partial B_{\operatorname{diam}(\Omega)}(a)} |w|^2
              \intd \mathcal H^1(x) \right) \\ 
            & \leq C \left( \rho^2 + \int_{\R^2} |\nabla (m-\phi) |^2
              \intd x \right).
	  \end{split}
	\end{align}
	which is the estimate \eqref{eq:L2_estimate_tail_upper}.
\end{proof}

\begin{proof}[Proof of Lemma \ref{lemma:lower_bound_DMI}]
	Letting $w:= m + e_3 - \phi - Re_3$, we compute
	\begin{align}\label{eq:split_DMI}
	  \begin{split}
            \int_\Omega m'\cdot \nabla m_{3} \intd x
            &= \int_\Omega (m+e_3)'\cdot \nabla (m +e_3)_3 \intd x\\
            &= \int_\Omega (\phi +Re_3 )'\cdot\nabla
            (\phi_{3} +Re_3 ) \intd x
            + \int_\Omega (\phi +Re_3 )'\cdot\nabla w_{3} \intd x\\
            & \quad + \int_\Omega w'\cdot\nabla (\phi +Re_3 )
            \intd x + \int_\Omega w'\cdot\nabla w_{3} \intd x.
	  \end{split}
	\end{align}
	
	The first term gives
	\begin{align}
          \int_\Omega (\phi +Re_3 )'\cdot\nabla (\phi_{3} +Re_3
          ) \intd x = \rho \int_{\rho^{-1}(\Omega -{ a })} (R
          (\Phi+e_3 ))'\cdot\nabla (R\Phi)_3 \intd x.  
	\end{align}
	As $|(R (\Phi+e_3 ))'\cdot\nabla (R\Phi)_3| \leq C / (1 +
        |x|^3)$ uniformly in $R$, we get
	\begin{align}
	  \begin{split}
            \left|\rho \int_{\R^2\setminus
                B_{\rho^{-1}\operatorname{dist}({ a
                  },\R^2\setminus\Omega)}(0)} (R (\Phi+e_3
              ))'\cdot\nabla (R\Phi)_3 \intd x \right| & \leq C\rho
            \int_{\rho^{-1}\operatorname{dist}({ a
              },\R^2\setminus\Omega)}^\infty \frac{r}{1 + r^3}
            \intd r
            \\
            & \leq C \frac{\rho^2}{ \operatorname{dist}({ a
              },\R^2\setminus\Omega)}.
	  \end{split}
	\end{align}
	Similarly, we get the estimate \eqref{eq:DMI_bounded}.  In
        total, we obtain
	\begin{align}\label{eq:DMI_first_term}
          \left| \int_\Omega (\phi +Re_3 )'\cdot\nabla (\phi +Re_3 )
          \intd x  -  \rho \int_{\R^2} (R (\Phi+e_3 ))'\cdot\nabla
          (R\Phi)_3 \intd x \right|  \leq  C\frac{\rho^2}{
          \operatorname{dist}({ a},\R^2\setminus\Omega)}. 
	\end{align}
	
	We treat the second term by using Young's inequality to get
	\begin{align}
          \left| \int_\Omega (\phi +Re_3 )'\cdot\nabla w_{3} \intd
          x\right| \leq  \frac{1}{2}\left(
          \rho^2\int_{\rho^{-1}(\Omega-{ a})} |\Phi + e_3|^2 \intd x +
          \int_{\Omega} |\nabla w|^2 \intd x \right). 
	\end{align}
	As $|\Phi +e_3| \leq C / ( 1 + |x|)$, we have
	\begin{align}\label{eq:L2_of_BP}
          \int_{\rho^{-1}(\Omega-{ a })} |\Phi + e_3|^2
          \intd x \leq  C \int_0^{\rho^{-1}\operatorname{diam}(\Omega)}
          \frac{r}{(1+r)^2} \intd r \leq C' |\log \rho|.
	\end{align}
	Therefore, we get
	\begin{align}\label{eq:DMI_second_term}
          \left| \int_\Omega (\phi +Re_3 )'\cdot\nabla w_{3} \intd
          x \right| \leq  C \left( \rho^2|\log\rho| + \int_{\R^2}|
          \nabla (m- \phi )|^2\intd x \right). 
	\end{align}
	
	For the third term, we find by H\"older's inequality that for
        $p>2$ and $p' = \frac{p}{ p - 1} \in (1,2)$ we have
	\begin{align}\label{eq:Hoelder}
	  \begin{split}
            \left|\int_\Omega w'\cdot\nabla (\phi +Re_3 ) \intd
              x\right| &\leq \| w' \|_{L^p(\Omega)} \| \nabla
            \phi\|_{L^{p'}(\Omega)}.
	  \end{split}
	\end{align}
	Noticing that $|\nabla \Phi|^{p'}$ decays sufficiently fast to
        be integrable, we furthermore compute
	\begin{align}
          \int_\Omega |\nabla \phi|^{p'} \intd x \leq \rho^{{2-p'}}
          \int_{\R^2} |\nabla \Phi |^{p'} \intd x \leq C
          \rho^{{2-p'}}. 
	\end{align}
	By the Sobolev embedding, for some $C_p > 0$ depending only on
        $\Omega$ and $p$ we have
	\begin{align}
	  \begin{split}
            \|w'\|_{L^p(\Omega)} & \leq C_p \left( \|\nabla
              w'\|_{L^2(\Omega)}+ \|w'\|_{L^2(\Omega)} \right).
	  \end{split}
	\end{align}
	Therefore, together with the estimates
        \eqref{eq:L2_estimate_tail_upper} and \eqref{eq:Hoelder} we see that
	\begin{align}
                  \left| \int_\Omega w'\cdot\nabla (\phi +Re_3 )
                    \intd x\right| &\leq C_p \rho^{\frac{2}{p'} -1}\left(\rho
                    + \left(\int_{\R^2}| \nabla (m- \phi)|^2\intd
                      x\right)^\frac{1}{2}
                  \right).
      \end{align}
      Applying Young's inequality to
      $ \rho^{\frac{2}{p'} -1} \left(\int_{\R^2}| \nabla (m-
        \phi)|^2\intd x\right)^\frac{1}{2}$ and choosing $p=5$, we obtain
  	\begin{align}\label{eq:DMI_third_term}
  		\begin{split}
                  \left| \int_\Omega w'\cdot\nabla (\phi +Re_3 ) \intd
                    x\right| & \leq C_p \left( \rho^\frac{2}{p'} +
                    \rho^{\frac{4}{p'} -2} +\int_{\R^2}| \nabla (m-
                    \phi)|^2\intd x
                  \right) \\
                  & \leq C\left( \rho^{\frac{6}{5}} +\int_{\R^2}|
                    \nabla (m- \phi)|^2\intd x \right) .
        \end{split}
	\end{align}

	For the last term, we have by Young's inequality and
        estimate \eqref{eq:L2_estimate_tail_upper} that
	\begin{align}\label{eq:DMI_fourth_term}
          \left| \int_\Omega w'\cdot\nabla w_{3} \intd x \right| \leq
          C\left( \rho^2 + \int_{\R^2} |\nabla (m-\phi)|^2 \intd x
          \right). 
	\end{align}
        Combining the estimates \eqref{eq:DMI_first_term},
        \eqref{eq:DMI_second_term}, \eqref{eq:DMI_third_term}, and
        \eqref{eq:DMI_fourth_term} in \eqref{eq:split_DMI}, we get the
        desired estimate \eqref{eq:lower_bound_DMI_for_recovery}.
        Furthermore, the only dependence of the constant $C$ on $a$ in
        \eqref{eq:lower_bound_DMI_for_recovery} is through estimate
        \eqref{eq:DMI_first_term}, and for $\phi$ achieving the
        Dirichlet distance we can uniformly absorb this term into
        $Z(m)$ using Lemma \ref{lemma:lower_bound_dirichlet}. This
        gives us estimate \eqref{eq:lower_bound_DMI}.
\end{proof}

\subsection{$\Gamma$-convergence}

With the preliminary statements above, we can now argue for all the
relevant compactness properties.  Essentially, the centers $ a_\kappa$
cannot approach the boundary since otherwise the Dirichlet excess will
be too large by Lemma \ref{lemma:lower_bound_dirichlet}.  Estimates
for the radii $\rho_\kappa$ and the Dirichlet excess $Z(m_\kappa)$
easily follow.  To control pinning of the rotation, we refer to the
Moser-Trudinger-type inequality \cite[Lemma
2.5]{bernand2019quantitative} to side-step the fact that in two
dimensions $H^1$ does not embed into $L^\infty$.  Recall that
$\mathcal{A}_0$ and $\widetilde{\mathcal{A}_0}$ are defined in
\eqref{def:A0} and \eqref{def:A0tilde}, respectively.

\begin{lemma}\label{lemma:compactness}
	
  For every sequence of $\kappa_n \to 0$ and
  $m_{\kappa_n} \in \mathcal{A}_{\kappa_n}$ with
  \begin{align} \label{eq:negative_energy} \limsup_{n\to \infty }
    \frac{\mathcal{E}_{\kappa_n}(m_{\kappa_n}) - 8\pi}{\kappa_n^2}<0
	\end{align}
	there exists a subsequence (not relabeled) and
        $(R_0,r_0, a_0 ) \in \widetilde{\mathcal{A}_0}$ such that
        $m_{\kappa_n}$ BP-converges to $(R_0,r_0, a_0)$.
      \end{lemma}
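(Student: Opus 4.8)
The plan is to read off the parameters of the nearby Belavin--Polyakov profiles from the rigidity estimate \eqref{eq:rigidity} and then upgrade the soft information coming from $m_{\kappa_n}\in\mathcal A_{\kappa_n}$ to the quantitative bounds demanded by Definition~\ref{def:topology}. Since $m_{\kappa_n}\in\mathcal A_{\kappa_n}$, Lemma~\ref{lemma:a_priori} gives $Z(m_{\kappa_n})\le 16\pi C\kappa_n$, so by Theorem~\ref{thm:rigidity} there is a profile $\phi_n:=R_n\Phi(\rho_n^{-1}(\bullet-a_n))$ attaining the Dirichlet distance of $m_{\kappa_n}$ to $\mathcal B$, with $\int_{\R^2}|\nabla(m_{\kappa_n}-\phi_n)|^2\intd x\le\eta^{-1}Z(m_{\kappa_n})\to 0$; moreover Lemma~\ref{lemma:lower_bound_dirichlet} yields $a_n\in\Omega$ and $\rho_n^2\le C Z(m_{\kappa_n})\to 0$, so in particular $\rho_n<\tfrac12$ for large $n$. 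The next step is to bootstrap this to the sharp bound $Z(m_{\kappa_n})\le C\kappa_n^2$: writing $Z(m_{\kappa_n})=\bigl(\mathcal E_{\kappa_n}(m_{\kappa_n})-8\pi\bigr)+2\kappa_n\int_\Omega m_{\kappa_n}'\cdot\nabla m_{\kappa_n,3}\intd x$, using $\mathcal E_{\kappa_n}(m_{\kappa_n})-8\pi<0$ together with \eqref{eq:DMI_bounded} and \eqref{eq:lower_bound_DMI}, and inserting $\rho_n\le C\sqrt{Z(m_{\kappa_n})}$ from the previous step, one gets $Z(m_{\kappa_n})\le 2C_1\kappa_n\rho_n+C_3\bigl(Z(m_{\kappa_n})+\rho_n^{6/5}\bigr)\kappa_n\le C\kappa_n\sqrt{Z(m_{\kappa_n})}$ once $\kappa_n$ is small and $Z(m_{\kappa_n})\le1$, hence $Z(m_{\kappa_n})\le C\kappa_n^2$. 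In particular $\int_{\R^2}|\nabla(m_{\kappa_n}-\phi_n)|^2\intd x\le C\kappa_n^2$, which is \eqref{eq:m_close_to_phi}, and $\rho_n\le C\kappa_n$.

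Next I extract the limiting triple. For the lower bound on the radius I use the strict sign assumption \eqref{eq:negative_energy}: there is $c>0$ with $\mathcal E_{\kappa_n}(m_{\kappa_n})-8\pi\le -c\kappa_n^2$ for large $n$, so $2\kappa_n\int_\Omega m_{\kappa_n}'\cdot\nabla m_{\kappa_n,3}\intd x\ge Z(m_{\kappa_n})+c\kappa_n^2\ge c\kappa_n^2$; feeding this into \eqref{eq:lower_bound_DMI} and discarding the $o(\kappa_n^2)$ terms ($Z(m_{\kappa_n})\kappa_n\le C\kappa_n^3$, $\rho_n^{6/5}\kappa_n\le C\kappa_n^{11/5}$) gives $2\kappa_n\rho_n\int_{\R^2}(R_n(\Phi+e_3))'\cdot\nabla(R_n\Phi)_3\intd x\ge\tfrac{c}{2}\kappa_n^2$, whence $\rho_n\ge c'\kappa_n$ by \eqref{eq:DMI_bounded}. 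Thus $\rho_n/\kappa_n\in[c',C]$, and combined with \eqref{eq:radius_vs_distance} and $Z(m_{\kappa_n})\le C\kappa_n^2$ this forces $\operatorname{dist}(a_n,\R^2\setminus\Omega)\ge \rho_n/(C\kappa_n)\ge d_0>0$. Passing to a subsequence, $R_n\to R_0\in SO(3)$ by compactness of $SO(3)$, $\rho_n/\kappa_n\to r_0\in(0,\infty)$, and $a_n\to a_0\in\Omega$.

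The remaining and most delicate point, which I expect to be the main obstacle, is that the limiting rotation fixes $e_3$, i.e. $R_0e_3=e_3$: this is nontrivial because the Dirichlet distance does not detect the choice of rotation directly and $H^1(\R^2)$ does not embed into $L^\infty$. I would handle it by a logarithmic capacity argument on the annular region separating the skyrmion core from the complement of $\Omega$. Writing $u_n:=m_{\kappa_n}-\phi_n$, the rescaled maps $\tilde m_n:=m_{\kappa_n}(a_n+\rho_n\,\bullet)$ obey $\|\nabla(\tilde m_n-R_n\Phi)\|_{L^2(\R^2)}^2=\int_{\R^2}|\nabla u_n|^2\intd x\to0$ and $|\tilde m_n|\equiv1$, so along a subsequence $\tilde m_n\to R_0\Phi$ in $L^2_{\mathrm{loc}}(\R^2)$ (its $L^2_{\mathrm{loc}}$-limit differs from $R_0\Phi$ by a constant vector $c$, which must vanish since $|R_0\Phi+c|\equiv1$ forces $c=0$); averaging over radii then produces a scale $r_n^{\mathrm{in}}\in[\rho_n,2\rho_n]$ on which the mean of $u_n$ over $\partial B_{r_n^{\mathrm{in}}}(a_n)$ tends to $0$. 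On the other hand, for $x\in\R^2\setminus\Omega$ we have $|x-a_n|\ge d_0$ and, from the explicit form of $\Phi$, $u_n(x)=-e_3-\phi_n(x)=(R_n-\operatorname{id})e_3+O(\rho_n/|x-a_n|)$, so the mean of $u_n$ over the circle $\partial B_{2\operatorname{diam}\Omega}(a_n)\subset\R^2\setminus\Omega$ equals $(R_n-\operatorname{id})e_3+o(1)$. Comparing the radial means of $u_n$ on these two circles via the one-dimensional Cauchy--Schwarz inequality for the radial part of $\nabla u_n$ yields $\int_{\R^2}|\nabla u_n|^2\intd x\ge 2\pi\bigl(|(R_n-\operatorname{id})e_3|-o(1)\bigr)^2\big/\log(2\operatorname{diam}\Omega/\rho_n)$; since the left-hand side is $\le C\kappa_n^2$ while $\log(1/\rho_n)\le C\log(1/\kappa_n)$ by the radius bounds, the right-hand side forces $(R_n-\operatorname{id})e_3\to0$, i.e. $R_0e_3=e_3$. (Alternatively, one may invoke the Moser--Trudinger-type inequality \cite[Lemma~2.5]{bernand2019quantitative} at this point, as indicated in the discussion preceding the statement.)

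Finally, $(R_0,r_0,a_0)\in\widetilde{\mathcal A_0}$, and \eqref{eq:m_close_to_phi} together with the convergences $R_n\to R_0$, $\rho_n/\kappa_n\to r_0$, $a_n\to a_0$ established above are exactly the requirements of Definition~\ref{def:topology}, so $m_{\kappa_n}$ BP-converges to $(R_0,r_0,a_0)$ along the chosen subsequence. Apart from the rotation pinning of the third paragraph, every step is a routine combination of Lemmas~\ref{lemma:a_priori}, \ref{lemma:lower_bound_dirichlet}, \ref{lemma:skyrmion_tail} and \ref{lemma:lower_bound_DMI} with the rigidity estimate \eqref{eq:rigidity}.
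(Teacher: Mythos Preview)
Your proof is correct and, for the first two paragraphs, proceeds along essentially the same lines as the paper: both arguments feed Lemma~\ref{lemma:lower_bound_dirichlet} and Lemma~\ref{lemma:lower_bound_DMI} into the energy identity $Z(m_\kappa)-2\kappa\int_\Omega m'_\kappa\cdot\nabla m_{\kappa,3}\intd x=\mathcal E_\kappa(m_\kappa)-8\pi$ to extract the bounds $Z(m_\kappa)\le C\kappa^2$, $\rho_\kappa\sim\kappa$ and $\operatorname{dist}(a_\kappa,\partial\Omega)\ge d_0>0$. The paper packages these three conclusions into a single square-completion step (from $\tfrac12 Z-2C_2\kappa\rho\le -C_1\kappa^2$ and $\rho^2/\operatorname{dist}^2\le CZ$), whereas you bootstrap $Z\le C\kappa^2$ first (using only $\mathcal E_\kappa-8\pi<0$) and then derive the lower radius bound and the interior bound on $a_\kappa$ separately from the strict sign hypothesis; the outcome is the same.

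The genuine methodological difference is in pinning the rotation $R_0e_3=e_3$. The paper invokes the Moser--Trudinger-type inequality \cite[Lemma~2.5]{bernand2019quantitative}, which after transplanting via $\phi_\kappa$ to $\R^2$ yields $\exp\bigl(c|R_\kappa e_3-e_3|^2/\kappa^2\bigr)\rho_\kappa^2\le C$ and hence $|R_\kappa e_3-e_3|^2\le C\kappa^2|\log\kappa|$. Your alternative is an elementary capacity argument: compare the circular averages of $u_n=m_{\kappa_n}-\phi_n$ on an inner circle at scale $\rho_n$ (where the mean is $o(1)$ by the $L^2_{\mathrm{loc}}$-convergence $\tilde m_n\to R_0\Phi$, the constant offset being killed by $|R_0\Phi+c|\equiv1$) and on an outer circle in $\R^2\setminus\Omega$ (where the mean equals $(R_n-\mathrm{id})e_3+o(1)$), and bound the difference by $\bigl(\tfrac{1}{2\pi}\log(R/r)\int|\nabla u_n|^2\bigr)^{1/2}$ via Cauchy--Schwarz in the radial variable. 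This is entirely self-contained and gives the same rate $|R_n e_3-e_3|\le o(1)+C\kappa_n|\log\kappa_n|^{1/2}$; the trade-off is that the Moser--Trudinger route is shorter once one is willing to quote the external lemma, while yours avoids that dependency at the cost of the auxiliary $L^2_{\mathrm{loc}}$-compactness step for $\tilde m_n$.
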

      \noindent Note that the above lemma does not yet yield a limit
      in $\mathcal A_0$.

      We now proceed to formulate the $\Gamma$-convergence result by
      first giving the lower bound statement.  The convergence of the
      DMI term will follow from Lemma \ref{lemma:lower_bound_DMI}, and
      therefore we only need to deal with the Dirichlet excess. In
      particular, we have to prove that deviations from the
      Belavin-Polyakov profile are only energetically favorable in the
      tail of the skyrmion.  To this end, we split the Dirichlet
      excess into a part localized in the core and a tail
      contribution.  The localized part turns out to be given by the
      Hessian of the Dirichlet energy after using a new
      parametrization by mapping $\R^2$ to the sphere, with the
      Belavin-Polyakov profile closest to $m$.  Since Belavin-Polyakov
      profiles are minimizers of the Dirichlet energy, the Hessian is
      non-negative and thus the contribution of any possible core
      correction is non-negative.

\begin{proposition}\label{prop:lower_bound}
  Let $\kappa_n \to 0$ and let
  $m_{\kappa_n} \in \mathcal{A}_{\kappa_n}$ BP-converge to
  $(R_0,r_0, a_0 ) \in \widetilde{\mathcal{A}_0}$ with
  \begin{align}
    \label{eq:Emknl0}
          \limsup_{n\to
          \infty}\frac{\mathcal{E}_{\kappa_n}(m_{\kappa_n}) -
          8\pi}{\kappa_n^2} < 0.
	\end{align}
	Then we have
	\begin{align}
          \liminf_{n\to
          \infty}\frac{\mathcal{E}_{\kappa_n}(m_{\kappa_n}) -
          8\pi}{\kappa_n^2}  \geq \mathcal{E}_0 (R_0,r_0, a_0 ) 
	\end{align}
and, in particular, $(R_0,r_0, a_0) \in \mathcal{A}_0$.
\end{proposition}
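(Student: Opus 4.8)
The plan is to split $\mathcal{E}_{\kappa_n}(m_{\kappa_n}) - 8\pi = Z(m_{\kappa_n}) - 2\kappa_n\int_\Omega m_{\kappa_n}'\cdot\nabla m_{\kappa_n,3}\intd x$, denote by $\phi_n := R_n\Phi(\rho_n^{-1}(\bullet-a_n))$ the closest Belavin--Polyakov profile from Definition~\ref{def:topology}, and handle the two terms separately. Inserting \eqref{eq:lower_bound_DMI} into this identity and using $\rho_n\le(r_0+1)\kappa_n$ for large $n$ together with $\mathcal{E}_{\kappa_n}(m_{\kappa_n})-8\pi<0$, one first obtains $0\le Z(m_{\kappa_n})\le C\kappa_n^2$ (the lower bound being \eqref{eq:topological_bound}); consequently the right-hand side of \eqref{eq:lower_bound_DMI} is $o(\kappa_n^2)$. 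By dominated convergence --- the integrand of \eqref{eq:DMI_bounded} being bounded by $C(1+|x|)^{-3}$ uniformly in $R_n$, and $R_n\to R_0$ --- we then have $\kappa_n^{-1}\rho_n\int_{\R^2}(R_n(\Phi+e_3))'\cdot\nabla(R_n\Phi)_3\intd x\to r_0\int_{\R^2}(R_0\Phi)'\cdot\nabla\Phi_3\intd x$, where the identity $(R_0(\Phi+e_3))'\cdot\nabla(R_0\Phi)_3=(R_0\Phi)'\cdot\nabla\Phi_3$ uses $R_0e_3=e_3$ (so $(R_0v)_3=v_3$ and $(R_0v)'=\widetilde R_0v'$ for the planar block $\widetilde R_0$ of $R_0$) together with $(e_3)'=0$. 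Hence $-2\kappa_n^{-1}\int_\Omega m_{\kappa_n}'\cdot\nabla m_{\kappa_n,3}\intd x\to -2r_0\int_{\R^2}(R_0\Phi)'\cdot\nabla\Phi_3\intd x$, and the proposition reduces to the Dirichlet-excess estimate $\liminf_n\kappa_n^{-2}Z(m_{\kappa_n})\ge r_0^2\,T(a_0)$.

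For this estimate I would first record the algebraic identity following from $\Delta\phi_n+|\nabla\phi_n|^2\phi_n=0$, $\int_{\R^2}|\nabla\phi_n|^2\intd x=8\pi$, $|m_{\kappa_n}|=|\phi_n|=1$ (so that $\phi_n\cdot v_n=-\tfrac12|v_n|^2$ for $v_n:=m_{\kappa_n}-\phi_n$) and an integration by parts:
\begin{align*}
  Z(m_{\kappa_n})=\int_{\R^2}|\nabla v_n|^2\intd x-\int_{\R^2}|\nabla\phi_n|^2|v_n|^2\intd x.
\end{align*}
Then fix $\delta>0$ with $B_{2\delta}(a_0)\Subset\Omega$. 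Off the core ball $B_\delta(a_0)$ one has $|\nabla\phi_n|^2\lesssim\rho_n^2$, and using \eqref{eq:L2_estimate_tail_upper}, the bound $\int_\Omega|v_n|^2\intd x=O(\kappa_n^2)+|\Omega|\,|(R_n-I)e_3|^2=o(1)$ (write $v_n=w_n+(R_n-I)e_3$ and recall $R_n\to R_0$, $R_0e_3=e_3$), and the pointwise decay of $\phi_n+e_3$ on $\R^2\setminus\Omega$, one checks $\int_{\R^2\setminus B_\delta(a_0)}|\nabla\phi_n|^2|v_n|^2\intd x=o(\kappa_n^2)$. To produce the constant $T(a_0)$, note that on $\R^2\setminus\Omega$ one has $v_n'=-\phi_n'$, and by \eqref{eq:convergence_tail_exterior}--\eqref{eq:convergence_tail_gradient_exterior} the field $\rho_n^{-1}v_n'(x)$ there equals $2\widetilde R_n(x-a_n)|x-a_n|^{-2}$ up to an additive constant and an $O(\rho_n|x-a_n|^{-2})$ error; adding a cutoff correction of Dirichlet energy $o(\kappa_n^2)$ supported near $\R^2\setminus\Omega$ turns $\rho_n^{-1}v_n'$ into an admissible competitor for the (rotation-invariant version of the) problem \eqref{eq:Ta0}, whence $\int_{\R^2}|\nabla v_n'|^2\intd x\ge\rho_n^2T(a_n)-o(\kappa_n^2)$; since $T$ is continuous at $a_0\in\Omega$ and $\rho_n/\kappa_n\to r_0$ by \eqref{eq:radius_kappa}, $\rho_n^2T(a_n)/\kappa_n^2\to r_0^2T(a_0)$.

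It remains to control the core, i.e.\ to show $\int_{B_\delta(a_0)}\bigl(|\nabla v_n|^2-|\nabla\phi_n|^2|v_n|^2\bigr)\intd x\ge\int_{B_\delta(a_0)}|\nabla v_n'|^2\intd x-o(\kappa_n^2)$; adding this to the outer estimate yields $Z(m_{\kappa_n})\ge\int_{\R^2}|\nabla v_n'|^2\intd x-o(\kappa_n^2)\ge\rho_n^2T(a_n)-o(\kappa_n^2)$, hence $\liminf_n\kappa_n^{-2}Z(m_{\kappa_n})\ge r_0^2T(a_0)$. For this I would reparametrize on $B_\delta(a_n)$, writing $m_{\kappa_n}=R_n\Phi\circ\widetilde G_n$ with $\widetilde G_n$ close to $\rho_n^{-1}(\bullet-a_n)$ (possible since $m_{\kappa_n}$ is close to $\phi_n$ there), so that by conformality of $\Phi$ the core part of $Z$ becomes, up to matching terms across $\partial B_\delta(a_n)$, the second variation at its minimizer $\rho_n^{-1}(\bullet-a_n)$ of the conformally invariant functional $G\mapsto\int 4|\nabla G|^2(1+|G|^2)^{-2}\intd x$ --- which is non-negative because the Belavin--Polyakov profiles minimize the Dirichlet energy --- and use that $\phi_n$ is the \emph{closest} profile, so that $v_n$ is orthogonal in $\mathring H^1(\R^2)$ to $T_{\phi_n}\mathcal B$, to absorb the core in-plane term $\int_{B_\delta(a_0)}|\nabla v_n'|^2\intd x$ into this second variation. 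Combining with the DMI limit then gives $\liminf_n\kappa_n^{-2}(\mathcal{E}_{\kappa_n}(m_{\kappa_n})-8\pi)\ge r_0^2T(a_0)-2r_0\int_{\R^2}(R_0\Phi)'\cdot\nabla\Phi_3\intd x=\mathcal E_0(R_0,r_0,a_0)$; since the left-hand side is $\le\limsup_n\kappa_n^{-2}(\mathcal{E}_{\kappa_n}(m_{\kappa_n})-8\pi)<0$, we get $\mathcal E_0(R_0,r_0,a_0)<0$, i.e.\ $(R_0,r_0,a_0)\in\mathcal A_0$.

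The main obstacle is exactly this last core estimate and its matching to the tail: one must justify the reparametrization where $m_{\kappa_n}$ is only $\mathring H^1$- and not $L^\infty$-close to $\phi_n$, bound the cubic-and-higher remainders and all boundary terms at $\partial B_\delta(a_0)$ so that they are genuinely $o(\kappa_n^2)$ (a mere $O(\kappa_n^2)$ would destroy the sharp constant $T(a_0)$), and cope with the failure of $\mathring H^1(\R^2)\hookrightarrow L^\infty$ in two dimensions, for which the Moser--Trudinger inequality \cite[Lemma~2.5]{bernand2019quantitative} is the natural tool.
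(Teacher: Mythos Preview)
Your DMI treatment and the algebraic identity for $Z(m_{\kappa_n})$ match the paper. Your tail idea---correcting $\rho_n^{-1}v_n'$ near $\R^2\setminus\Omega$ into an exact competitor for $T(a_n)$ and then invoking continuity of $T$---is a workable alternative to the paper's route, which instead takes the weak $H^1_{\mathrm{loc}}$-limit $u$ of $\rho_\kappa^{-1}w_\kappa'$, reads off the exact boundary data from Lemma~\ref{lemma:skyrmion_tail}, and appeals to weak lower semicontinuity of the Dirichlet energy (no correction, no continuity of $T$ needed).

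The genuine gap is the core estimate. Your target inequality
\[
\int_{B_\delta(a_0)}\bigl(|\nabla v_n|^2-|\nabla\phi_n|^2|v_n|^2\bigr)\intd x\;\ge\;\int_{B_\delta(a_0)}|\nabla v_n'|^2\intd x - o(\kappa_n^2)
\]
is not merely hard to justify---it is the wrong target. Rewriting it as $\int_{B_\delta}|\nabla v_{n,3}|^2\ge\int_{B_\delta}|\nabla\phi_n|^2|v_n|^2 - o(\kappa_n^2)$ makes the problem visible: the Euclidean splitting $v_n=(v_n',v_{n,3})$ has nothing to do with the tangent/normal decomposition relative to $\phi_n(\Sphere^2)$ inside the core (it only aligns in the tail where $\phi_n\approx -e_3$). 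The second-variation positivity controls the \emph{full} quadratic form $\int(|\nabla v|^2-|\nabla\phi|^2|v|^2)$ for tangent fields; it says nothing about what remains after you peel off $|\nabla v_n'|^2$. Orthogonality of $v_n$ to $T_{\phi_n}\mathcal B$ in $\mathring H^1(\R^2)$ gives a spectral gap for the full form on the full space, not for this truncated form on $B_\delta$. Separately, the reparametrization $m_{\kappa_n}=R_n\Phi\circ\widetilde G_n$ is ill-defined wherever $m_{\kappa_n}$ hits $-R_ne_3$, which mere $\mathring H^1$-closeness does not rule out.

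The paper avoids this by \emph{not} routing the core through $T$ at all. It pulls the core back to $\Sphere^2$ via the conformal diffeomorphism $\phi_\kappa^{-1}$ (which is globally defined, unlike your $\Phi^{-1}$), obtaining $v_\kappa:=\rho_\kappa^{-1}(m_\kappa\circ\phi_\kappa^{-1}-\operatorname{id}_{\Sphere^2})$ on $\Sphere^2$; the Moser--Trudinger-type estimate \cite[Lemma~2.5]{bernand2019quantitative} gives $H^1(\Sphere^2)$-compactness and forces the weak limit $v_0$ to be \emph{tangent} (via $v_\kappa\cdot z=-\tfrac{\rho_\kappa}{2}|v_\kappa|^2$). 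The excess is split at the shrinking radius $\sqrt{\rho_\kappa}$ so that, after $\delta\to 0$, the core contributes the full intrinsic second variation $\int_{\Sphere^2}(|\nabla v_0|^2-2|v_0|^2)\ge 0$, which is simply discarded, while only the tail $\rho_\kappa^{-1}w_\kappa'$ survives and its weak limit is an admissible $u$ for \eqref{eq:Ta0}. In short: core and tail are bounded below \emph{independently} (by $0$ and by $r_0^2T(a_0)$ respectively) rather than recombined into a single competitor for $T$; this is the idea your decomposition is missing.
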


We next turn to the construction of a recovery sequence.  Essentially,
we take the Belavin-Polyakov profile determined by the limit problem
and modify the tail according to the harmonic function $u$ arising as
the minimizer of $T( a_0)$, see \eqref{eq:Ta0}.  The DMI term has
again been treated in Lemma \ref{lemma:lower_bound_DMI}, so that after
an appropriate construction only the Dirichlet term remains to be
analyzed.  To ensure that the tail correction does not affect the
skyrmion core, we modify $u$ to satisfy $u= 0$ in a small neighborhood
of $ a_0$, which is possible since points have zero capacity in
$H^1(\R^2)$.

\begin{proposition}\label{prop:upper_bound}
  For every $(R_0,r_0, a_0) \in \mathcal{A}_0$ and all sequences of
    $\kappa_n \to 0$ there exists a sequence
  $m_{\kappa_n} \in \mathcal{A}_{\kappa_n}$ BP-converging to
  $(R_0,r_0, a_0)$ such that
	\begin{align}
          \limsup_{n\to
          \infty}\frac{\mathcal{E}_{\kappa_n}(m_{\kappa_n}) -
          8\pi}{\kappa_n^2}  \leq \mathcal{E}_0 (R_0,r_0, a_0). 
	\end{align}
\end{proposition}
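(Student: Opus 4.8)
The plan is to construct the recovery sequence explicitly, by attaching the optimal rescaled tail to a fixed Belavin--Polyakov profile, and then to diagonalize in an auxiliary cut-off parameter. Fix $(R_0,r_0,a_0)\in\mathcal{A}_0$ and $\kappa_n\to 0$. Let $\tilde R_0\in SO(2)$ be the restriction of $R_0$ to $\{x_3=0\}$, so $R_0(v,0)=(\tilde R_0 v,0)$ for $v\in\R^2$, and put $\rho_n:=r_0\kappa_n$ and $\phi_n:=R_0\Phi(\rho_n^{-1}(\bullet-a_0))\in\mathcal{B}$. Let $u_\ast$ be the minimizer in \eqref{eq:Ta0}, i.e. $u_\ast(x)=2(x-a_0)/\abs{x-a_0}^2$ on $\R^2\setminus\Omega$ and the harmonic extension of this datum inside $\Omega$; since $a_0\in\Omega$, the function $u_\ast$ is smooth, hence bounded, near $a_0$, and $\int_{\R^2}\abs{\nabla u_\ast}^2\intd x=T(a_0)$.

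First I would decouple the tail from the core. As $u_\ast$ is bounded near $a_0$ and points carry zero $H^1$-capacity in two dimensions, for each $\delta\in(0,1)$ a logarithmic cut-off around $a_0$ produces $u_\delta:\R^2\to\R^2$ with $u_\delta=u_\ast$ outside $B_\delta(a_0)$, $u_\delta\equiv 0$ on a neighbourhood of $a_0$, and $\int_{\R^2}\abs{\nabla u_\delta}^2\intd x\to T(a_0)$ as $\delta\to 0$. The candidate $m_n^\delta$ is then defined, on the neighbourhood of $a_0$ where $u_\delta$ vanishes, to equal $\phi_n$, and elsewhere to be the $\Sphere^2$-normalization $v_n^\delta/\abs{v_n^\delta}$ of $v_n^\delta:=(\phi_n'+\rho_n\tilde R_0 u_\delta,\ \phi_{n,3})$; for $n$ large this is continuous and well defined, since $\rho_n$ lies below the cut-off scale. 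A final modification inside a shrinking collar $\{\operatorname{dist}(\cdot,\partial\Omega)<\ell_n\}$ of $\partial\Omega$ enforces $m_n^\delta=-e_3$ exactly on $\partial\Omega$ and hence on $\R^2\setminus\Omega$: there $u_\delta=u_\ast$ coincides with the exterior datum, so $\phi_n'+\rho_n\tilde R_0 u_\delta=O(\rho_n^3)$ and $\phi_{n,3}=-1+O(\rho_n^2)$, and the regularity of $u_\ast$ up to $\partial\Omega$ lets one dispose of these deviations at a cost of $o(\rho_n^2)$ of Dirichlet energy as $\ell_n\to 0$. By construction $\int_{\R^2}\abs{\nabla(m_n^\delta-\phi_n)}^2\intd x\le C_\delta\rho_n^2$; thus for $n$ large $m_n^\delta$ is homotopic to $\phi_n$ in $\mathring H^1(\R^2;\Sphere^2)$, so it has degree $1$ and lies in $\mathcal{A}$, and it BP-converges to $(R_0,r_0,a_0)$ in the sense of Definition \ref{def:topology}.

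The energy estimate is comparatively soft. As $\phi_n$ is a minimizing harmonic map, $\abs{\phi_n}=\abs{m_n^\delta}=1$ and $\Delta\phi_n=-\abs{\nabla\phi_n}^2\phi_n$, so an integration by parts (boundary terms at infinity vanishing by the decay of $\phi_n$ and of $w_n^\delta:=m_n^\delta-\phi_n$) yields the identity
\begin{align*}
  Z(m_n^\delta)=\int_{\R^2}\abs{\nabla m_n^\delta}^2\intd x-8\pi
  &=\int_{\R^2}\abs{\nabla w_n^\delta}^2\intd x-\int_{\R^2}\abs{\nabla\phi_n}^2\abs{w_n^\delta}^2\intd x\\
  &\le\int_{\R^2}\abs{\nabla w_n^\delta}^2\intd x,
\end{align*}
so any core correction is energetically free. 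Since $w_n^\delta$ agrees with $\rho_n(\tilde R_0 u_\delta,0)$ up to lower-order normalization terms and the collar correction, a direct computation gives $\int_{\R^2}\abs{\nabla w_n^\delta}^2\intd x=\rho_n^2\int_{\R^2}\abs{\nabla u_\delta}^2\intd x+o_n(\rho_n^2)$ for each fixed $\delta$. For the DMI term I would invoke \eqref{eq:lower_bound_DMI_for_recovery} of Lemma \ref{lemma:lower_bound_DMI}; using $R_0 e_3=e_3$ (so $(R_0(\Phi+e_3))'=(R_0\Phi)'$ and $(R_0\Phi)_3=\Phi_3$), $\rho_n=r_0\kappa_n$, $\int_{\R^2}\abs{\nabla w_n^\delta}^2\intd x=O(\kappa_n^2)$ and $\kappa_n\rho_n^{6/5}=o(\kappa_n^2)$, it gives
\[
  -2\kappa_n\int_\Omega (m_n^\delta)'\cdot\nabla m_{n,3}^\delta\intd x
  =-2 r_0\kappa_n^2\int_{\R^2}(R_0\Phi)'\cdot\nabla\Phi_3\intd x+o_n(\kappa_n^2).
\]
Adding the two contributions, $\kappa_n^{-2}\bigl(\mathcal{E}_{\kappa_n}(m_n^\delta)-8\pi\bigr)\le r_0^2\int_{\R^2}\abs{\nabla u_\delta}^2\intd x-2 r_0\int_{\R^2}(R_0\Phi)'\cdot\nabla\Phi_3\intd x+o_n(1)$.

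Taking $\limsup_{n\to\infty}$ and then $\delta\to 0$, so that $\int_{\R^2}\abs{\nabla u_\delta}^2\intd x\to T(a_0)$, the right-hand side converges to $r_0^2 T(a_0)-2 r_0\int_{\R^2}(R_0\Phi)'\cdot\nabla\Phi_3\intd x=\mathcal{E}_0(R_0,r_0,a_0)<0$. A diagonal argument then selects $\delta_n\to 0$ slowly enough that $m_{\kappa_n}:=m_n^{\delta_n}$ still BP-converges to $(R_0,r_0,a_0)$ and obeys $\limsup_n\kappa_n^{-2}\bigl(\mathcal{E}_{\kappa_n}(m_{\kappa_n})-8\pi\bigr)\le\mathcal{E}_0(R_0,r_0,a_0)$; in particular $m_{\kappa_n}\in\mathcal{A}_{\kappa_n}$ for all large $n$, and for the finitely many remaining indices one takes any element of the non-empty set $\mathcal{A}_{\kappa_n}$ (Lemma \ref{lemma:construction}). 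The genuine difficulty, I expect, lies in the construction rather than in the bookkeeping: the tail must be modelled on the \emph{bounded} harmonic function $u_\ast$ and not on the singular dipole $2(x-a_0)/\abs{x-a_0}^2$ (whose Dirichlet energy diverges near $a_0$), which is exactly why the cut-off around $a_0$ and the zero-capacity argument enter, and one must then absorb the mismatch this creates at $\partial\Omega$ — together with the passage to a genuine $\Sphere^2$-valued competitor of exact degree one — without inflating the sharp value $\rho_n^2 T(a_0)+o(\rho_n^2)$ of the Dirichlet excess.
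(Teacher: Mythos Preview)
Your proof follows essentially the same route as the paper's: one builds the recovery sequence by attaching to the Belavin--Polyakov profile $\phi_n$ a rescaled tail modelled on the harmonic minimizer $u_\ast$ of $T(a_0)$, truncated near $a_0$ by a logarithmic cut-off (using that points have zero $H^1$-capacity), then controls the Dirichlet excess via the identity $Z(m)=\int|\nabla(m-\phi)|^2-\int|m-\phi|^2|\nabla\phi|^2\le\int|\nabla(m-\phi)|^2$, invokes Lemma~\ref{lemma:lower_bound_DMI} for the DMI term, and diagonalizes in $\delta$.

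The only noteworthy differences are implementation details. First, the paper lifts to $\Sphere^2$ via the southern-hemisphere map $p(v')=(v',-\sqrt{1-|v'|^2})$ applied to the in-plane component only, whereas you normalize a full $3$-vector $v_n^\delta/|v_n^\delta|$; both work. Second, for enforcing $m=-e_3$ on $\partial\Omega$ the paper introduces a \emph{global} corrector $v_\kappa$: one sets $v_\kappa:=-\phi_\kappa'-r_0\kappa R_0' u$ on $\R^2\setminus\Omega$ (where it is $O(\kappa^2)$ in $W^{1,\infty}$ by the explicit tail estimates of Lemma~\ref{lemma:skyrmion_tail}) and Lipschitz-extends it to all of $\R^2$, so that $\phi_\kappa'+r_0\kappa R_0' u_\delta+v_\kappa$ vanishes identically outside $\Omega$ and $p$ of it is exactly $-e_3$ there. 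This is slightly cleaner than your shrinking collar because it relies only on the explicit exterior formula and a Lipschitz extension, and thus goes through for merely Lipschitz $\partial\Omega$; your appeal to ``regularity of $u_\ast$ up to $\partial\Omega$'' is not guaranteed in that generality. Your collar argument can be repaired in the same spirit---extend the $O(\rho_n^2)$ boundary mismatch (which is smooth along $\partial\Omega$ since it involves only $\phi_n$ and the exterior datum $2(x-a_0)/|x-a_0|^2$, not the interior values of $u_\ast$) by Lipschitz extension and cut off---but as written the justification is slightly loose on this point.
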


Throughout the proofs of these statements, we will omit the index $n$
from the notation by abuse of notation.

\begin{proof}[Proof of Lemma \ref{lemma:compactness}]
  Let $m_\kappa \in \mathcal A_\kappa$ be a sequence
  satisfying condition \eqref{eq:negative_energy}. We take
  $\phi_\kappa (x) = R_\kappa \Phi(\rho_\kappa^{-1}(x-{ a_\kappa }))$
  with $R_\kappa \in SO(3)$, $\rho_\kappa>0$ and
  ${ a_\kappa } \in \R^2$ to be a Belyavin-Polyakov profile achieving
  the Dirichlet distance of $m_\kappa$ to $\mathcal{B}$. We would like
  to show existence of a subsequence $m_\kappa$ (not relabelled)
  BP-converging to $(R_0, r_0, a_0) \in \widetilde{\mathcal A_0}$ (see
  Definition~\ref{def:topology}). Due to compactness of $SO(3)$ it is
  clear that there exists $R_0 \in SO(3)$ such that
  $R_\kappa \to R_0$, however, we need to show that
  $R_0 e_3=e_3$. Using Lemma~\ref{lemma:lower_bound_dirichlet}, we
  also know that $a_\kappa \in \Omega$ and hence
  $a_\kappa \to a_0 \in \overline\Omega$, so we only need to show
  that $a_0 \in \Omega$. We begin by estimating $\rho_\kappa$ and the
  Dirichlet distance through the Dirichlet excess.
	
  \textit{Step 1: Estimate
    $\operatorname{dist}({ a_\kappa },\R^2\setminus\Omega)$ and
    $Z(m_\kappa)$, and prove
		\begin{align}\label{eq:rho_approx_kappa}
                  0 < \liminf_{\kappa \to 0}
                  \frac{\rho_\kappa}{\kappa} \leq  \limsup_{\kappa \to
                  0} \frac{\rho_\kappa}{\kappa} < \infty. 
		\end{align}}
	
By our assumption \eqref{eq:negative_energy}, for $\kappa>0$ small
enough there exists a subsequence (not relabeled) and $C_1>0$, such
that we have
	\begin{align}\label{eq:excess_in_terms_of_DMI}
          Z(m_\kappa) -2\kappa \int_\Omega  m_\kappa' \cdot \nabla
          m_{\kappa,3}  \intd x = \mathcal{E}_\kappa(\m_\kappa) - 8\pi
          \leq - C_1 \kappa^2. 
	\end{align}
	Due to estimate \eqref{eq:lower_bound_DMI} from Lemma
        \ref{lemma:lower_bound_DMI}, there exists $C_2>0$ with
	\begin{align}
          Z(m_\kappa) - C_2 \kappa \left( \rho_\kappa +Z(m_\kappa) +
          \rho_\kappa^\frac{6}{5} \right) \leq -C_1 \kappa^2. 
	\end{align}
        Noting that $\rho_\kappa\leq C\kappa^{\frac12}$ by Lemma
        \ref{lemma:lower_bound_dirichlet}, if $\kappa$ is small enough
        we can absorb $Z(m_\kappa)$ and $\rho_\kappa^\frac{6}{5}$ in
        the second term on the left-hand side into the other terms,
        giving
	\begin{align}\label{eq:estzm}
          \frac{1}{2}Z(m_\kappa) - 2 C_2 \kappa  \rho_\kappa \leq -C_1
          \kappa^2. 
	\end{align}
	We may thus use Lemma \ref{lemma:lower_bound_dirichlet} again
        to obtain
	\begin{align}
          \frac{\rho_\kappa^2}{\operatorname{dist}^2 ({ a_\kappa} , \R^2
          \setminus \Omega) }  - 2C_2C \kappa  \rho_\kappa \leq -C_1 C
          \kappa^2. 
	\end{align}
	Completing the square on the left-hand side, we get
	\begin{align}\label{eq:complete_square}
          \left( \frac{\rho_\kappa }{\operatorname{dist}({ a_\kappa},
          \R^2\setminus \Omega)} - {C_2C}\kappa
          \operatorname{dist}({ a_\kappa }, \R^2\setminus \Omega)
          \right)^2 \leq \left( {C_2^2C^2}
          \operatorname{dist}^2 ({ a_\kappa}, \R^2\setminus
          \Omega) -C_1 C\right)  \kappa^2 
	\end{align}
        Since the left-hand must be non-negative and constants
          $C, C_1, C_2$ are positive, we obtain the estimate
	\begin{align}
          \liminf_{\kappa \to 0} \operatorname{dist}({ a_\kappa},
          \R^2\setminus \Omega) >0, 
	\end{align}
	so that after passing to a further subsequence we have
        $ a_\kappa \to a_0$ with $ a_0 \in \Omega$.
	
	Continuing from \eqref{eq:complete_square}, we also have
	\begin{align}
          \left| \frac{\rho_\kappa }{\kappa \operatorname{dist}({
          a_\kappa}, \R^2\setminus \Omega)} - {C_2C}
          \operatorname{dist}({ a_\kappa }, \R^2\setminus \Omega)
          \right| \leq \left( {C_2^2C^2} \operatorname{dist}^2({
          a_\kappa}, \R^2\setminus \Omega) -C_1 C\right)^\frac{1}{2}.
	\end{align}
	Consequently, we obtain
        $\limsup_{\kappa \to 0} \frac{\rho_\kappa}{\kappa} <\infty$.
        Since
	\begin{align}
          {C_2C}\operatorname{dist}({ a_\kappa }, \R^2\setminus
          \Omega) >  \left( {C_2^2C^2} \operatorname{dist}^2({
          a_\kappa}, \R^2\setminus \Omega) -C_1 C\right)^\frac{1}{2}, 
	\end{align}
        we also obtain
        $\liminf_{\kappa \to 0} \frac{\rho_\kappa}{\kappa} > 0$.
        Extracting a further subsequence, if necessary, we find
        $r\in (0,\infty)$ such that
        $\frac{\rho_\kappa}{\kappa} \to r$.
	
      By the estimate \eqref{eq:estzm}, Theorem \ref{thm:rigidity} and
      taking into account
      $\limsup_{\kappa \to 0} \frac{\rho_\kappa}{\kappa} <\infty$, we
      also get
      \begin{align}\label{eq:good_gradient_bound_proof}
        \limsup_{\kappa \to \infty} \kappa^{-2} \int_{\R^2} |\nabla
        (m_\kappa - \phi_\kappa)|^2 \intd x <\infty. 
      \end{align}

      \textit{Step 2: Prove $R_0 e_3 = e_3$.}

      As was already mentioned, the existence of $R_0 \in SO(3)$ such
      that $R_\kappa \to R_0$ along a subsequence simply follows from
      compactness of $SO(3)$. Therefore, we are left with showing
      $R_0 e_3 = e_3$.  By \cite[Lemma 2.5 and Lemma
      A.2]{bernand2019quantitative}, there is a constant $C>0$ such
      that
\begin{align}
  \int_{\R^2}\exp\left(\frac{2\pi}{3}\, \frac{| m_\kappa -
  \phi_\kappa| ^2}{\| \nabla (m_\kappa - \phi_\kappa)\|^2_{L^2(\R^2)}}
  \right) |\nabla \phi_\kappa|^2\intd x\leq C. 
\end{align}
Additionally, for $\kappa$ small enough, on
$\R^2\setminus B_{\operatorname{diam}(\Omega)} ({ a_\kappa}) \subset
\R^2\setminus \Omega$ we have $m_\kappa = -e_3$, as well as
$|\phi_\kappa + R_\kappa e_3| \leq C \kappa$ by Lemma
\ref{lemma:skyrmion_tail} and estimate \eqref{eq:rho_approx_kappa}.
As a result, also using estimate \eqref{eq:good_gradient_bound_proof},
for some constant $c>0$ we have 
\begin{align}
  \exp\left(c\frac{|R_\kappa e_3 - e_3|^2}{\kappa^2}  \right)
  \int_{\R^2\setminus B_{\operatorname{diam}(\Omega)} ({ a_\kappa})}
  |\nabla \phi_\kappa|^2 \intd x \leq C. 
\end{align}
The usual integration in polar coordinates therefore gives
\begin{align}
  \exp\left(c\frac{|R_\kappa e_3 - e_3|^2}{\kappa^2}  \right)
  \rho_\kappa^2 \leq C, 
\end{align}
which together with estimate \eqref{eq:rho_approx_kappa} implies
$\lim_{\kappa \to 0} R_\kappa e_3 =e_3$.
\end{proof}

\begin{proof}[Proof of Proposition \ref{prop:lower_bound}]
  Let $m_\kappa \in \mathcal A_\kappa$ BP-converge to
  $(R_0, r_0, a_0) \in \widetilde{\mathcal A_0}$.  We first choose a
  subsequence in $\kappa$ (not relabeled) such that
	\begin{align}
          \liminf_{\kappa \to 0} \frac{\mathcal{E}_\kappa(m_\kappa) -
          8\pi}{\kappa^2} = \lim_{\kappa \to
          0}\frac{\mathcal{E}_\kappa(m_\kappa) - 8\pi}{\kappa^2}, 
	\end{align}
	so that we may pass to further subsequences if necessary.
        Then by Lemma \ref{lemma:compactness}, and using the fact that
        BP-limits are unique, see Remark \ref{rem:unique}, we may
        further suppose that $R_\kappa \in SO(3)$, $\rho_\kappa$ and
        $ a_\kappa$ determine a Belavin-Polyakov profile
        $\phi_\kappa := R_\kappa \Phi( \rho_\kappa^{-1}(\bullet -
        a_\kappa))$ achieving the Dirichlet distance of $m_\kappa$ to
        $\mathcal{B}$.  In particular, we can apply Lemmas
        \ref{lemma:lower_bound_dirichlet}--\ref{lemma:lower_bound_DMI}.
	
	The fact that the DMI term converges to the corresponding
        expression in the $\Gamma$-limit follows immediately from
        Lemma \ref{lemma:lower_bound_DMI} and the assumptions
        \eqref{eq:m_close_to_phi} and \eqref{eq:radius_kappa} of
        BP-convergence, with an error of order
        $\kappa^{-1} Z(m_\kappa) + \kappa^\frac15$ as $\kappa \to 0$.
        We therefore only have to deal with the limit behavior of the
        Dirichlet energy excess, which satisfies
        $Z(m_\kappa) \leq C \kappa^2$ by \eqref{eq:Emknl0} for
        $\kappa$ small enough.
	
	To this end, we note that
        $\phi_\kappa :\R^2 \to \Sphere^2 \setminus\{-R_\kappa e_3\}$
        is a bijective, conformal mapping.  Therefore we may introduce
        the function $v_\kappa : \Sphere^2 \to \R^3$ defined as
        $v_\kappa := \rho_\kappa^{-1}( m_\kappa \circ \,
        \phi_\kappa^{-1} -\operatorname{id}_{\Sphere^2} )$.  From
        assumptions \eqref{eq:m_close_to_phi} and
        \eqref{eq:radius_kappa}, and the change of variables formula
        \cite[Lemma A.2]{bernand2019quantitative} for conformal
        mappings we get
        	\begin{align}
                  \limsup_{\kappa \to 0} \int_{\Sphere^2}
                  |\nabla v_\kappa|^2 \intd \mathcal H^2(z) & <
                                                              \infty.  
        	\end{align}
                Thus, applying the Poincar{\'e}-type estimate
                \cite[Lemma 2.5]{bernand2019quantitative} to
                $\rho_\kappa v_\kappa$ we get 
	\begin{align}
          \limsup_{\kappa \to 0} \int_{\Sphere^2} \left(  |v_\kappa
          |^2 + |\nabla v_\kappa|^2 \right) \intd \mathcal H^2(z)
          & < \infty.  
	\end{align}
	Consequently, up to a subsequence there exists
        $v_0 \in H^1(\Sphere^2; \R^3)$ such that
        $v_\kappa \rightharpoonup v_0$ weakly in
        $H^1(\Sphere^2; \R^3)$.
	
	For $z\in \Sphere^2$, we compute
	\begin{align}
	  \begin{split}
            v_\kappa (z)\cdot z & =\rho_\kappa ^{-1} (m_\kappa \circ
            \, \phi_\kappa^{-1}(z) -z)\cdot z = \rho_\kappa ^{-1}
            (m_\kappa \circ \, \phi_\kappa^{-1}(z)\cdot z -1 ) =
            -\frac{1}{2\rho_\kappa} | m_\kappa \circ \,
            \phi_\kappa^{-1}(z)- z|^2.
	  \end{split}
	\end{align}
	Therefore, another application of \cite[Lemma
        2.5]{bernand2019quantitative} and assumptions
        \eqref{eq:m_close_to_phi} and \eqref{eq:radius_kappa} gives
	\begin{align}
          \limsup_{\kappa \to 0} \kappa^{-2}\int_{\Sphere^2} |v_\kappa
          (z)\cdot z |^2 \intd \mathcal H^2(z) = \limsup_{\kappa \to 0}
          \int_{\Sphere^2}  \frac{1}{4\rho_\kappa^2\kappa^2} |
          m_\kappa \circ \, \phi_\kappa^{-1}(z)-  z|^4 \intd
          \mathcal H^2(z)   <
          \infty. 
	\end{align}
	As a result, in the limit $\kappa \to 0$, we get that
	\begin{align}\label{eq:tangent}
          v_0(z) \cdot z = 0 \text{ for }
          \mathcal{H}^2 \text{-a.e. }z\in \Sphere^2. 
	\end{align}
	In particular, $v_0$ is an $H^1$-regular, tangent vector field
        on the sphere.
	
	We define a set
        $U_\kappa:= \phi_\kappa ( B_{\sqrt{\rho_\kappa}}( a_\kappa))
        \subset \Sphere^2$ and a function
        $w_\kappa := m_\kappa +e_3 - \phi_\kappa - R_\kappa e_3$.  By
        \cite[Lemma A.4]{bernand2019quantitative}, see also
        \cite[Lemma 9]{li2018stability}, the excess can be rewritten
        as
	\begin{align}
	  \begin{split}
            \kappa^{-2}Z(m_\kappa) & = \kappa^{-2} \left( \int_{\R^2}
              |\nabla (m_\kappa-\phi_\kappa)|^2 \intd x - \int_{\R^2}
              |m_\kappa - \phi_\kappa|^2 |\nabla \phi_\kappa|^2\intd
              x\right)\\
            & = \frac{\rho_\kappa^2 }{\kappa^2} \int_{\R^2 \setminus
              B_{\sqrt{\rho_\kappa}}( a_\kappa)} \left|\nabla \left(
                \rho_\kappa^{-1} w_\kappa \right) \right|^2 \intd x
            +\frac{\rho_\kappa^2}{\kappa^2} \left( \int_{U_\kappa} |
              \nabla v_\kappa|^2 \intd \mathcal H^2(z) -2
              \int_{\Sphere^2} | v_\kappa|^2 \intd \mathcal H^2(z)
            \right),
	  \end{split}
	\end{align}
	where the transformation of the expressions onto the sphere is
        again via \cite[Lemma A.2]{bernand2019quantitative}.
                
        Due to assumptions \eqref{eq:m_close_to_phi} and
        Lemma \ref{lemma:skyrmion_tail}, there exists
        $u \in H_\mathrm{loc}^1(\R^2;\R^2)$ such that
        \begin{align}        	
          \rho_\kappa^{-1} w'_\kappa \rightharpoonup 
          u
        \end{align} in
        $H^1_{\mathrm{loc}}(\R^2; \R^2)$ as $\kappa \to 0$.
        Let $R_0' \in SO(2)$ be such that $R_0 v = (R_0' v', v_3)$ for all
        $v \in \R^3$. Note that, again by Lemma
        \ref{lemma:skyrmion_tail} and assumption
        \eqref{eq:radius_kappa}, the limit 
        for $x\in \R^2\setminus \Omega$ satisfies 
       	\begin{align}\label{eq:boundary_data}
          u(x) = 2 R_0' \frac{x-a_0}{|x-a_0|^2}.
       	\end{align}
        
        Let $\delta \in (0, \frac{1}{2})$. For $\kappa > 0$ small
        enough we have by BP-convergence that
        $B_{\sqrt{\rho_\kappa}}( a_\kappa) \subset B_{\delta}( a_0)$.
        Similarly, due to \eqref{eq:m_close_to_phi} and the definition
        of $\Phi$ (see \eqref{def:BPprofile}), for the set
        $V_\delta := \{z\in \Sphere^2: |z+e_3| >\delta\}$ we obtain
        $V_\delta \subset (R_\kappa \Phi) (B_{\rho_\kappa^{-1/2}}(0))
        = U_\kappa $. Therefore, for any fixed $\tilde r> \delta$ and
        $\kappa$ small enough, we obtain
        \begin{align}
          \kappa^{-2}Z(m_\kappa) \geq \frac{\rho_\kappa^2 }{\kappa^2}
          \int_{B_{\tilde r} (a_0) \setminus 
          B_{\delta}( a_0)} \left|\nabla \left(
          \rho_\kappa^{-1} w'_\kappa \right) \right|^2 \intd x
          +\frac{\rho_\kappa^2}{\kappa^2} \left( \int_{V_\delta} |
          \nabla v_\kappa|^2 \intd \mathcal H^2(z) -2
          \int_{\Sphere^2} | 
          v_\kappa|^2 \intd \mathcal H^2(z) \right).
	\end{align}
        Together with the compact Sobolev embedding
        $H^1(\Sphere^2) \hookrightarrow L^2(\Sphere^2)$, we therefore
        have in the limit $\kappa \to 0$ that
	\begin{align}
		\begin{split}
                  \liminf_{\kappa \to 0} \kappa^{-2}Z(m_\kappa) & \geq
                  r^2 \int_{B_{\tilde r}(a_0) \setminus B_{\delta}(
                    a_0)} |\nabla u |^2 \intd x +r^2\left(
                    \int_{V_\delta} | \nabla v_0|^2 \intd \mathcal
                    H^2(z) -2\int_{\Sphere^2} | v_0|^2 \intd \mathcal
                    H^2(z) \right).
		\end{split}
	\end{align}
	Letting $\delta \to 0$ and $\tilde r \to \infty$, we consequently get 
	\begin{align}
	  \begin{split}
            \liminf_{\kappa \to 0} \kappa^{-2}Z(m_\kappa) & \geq r^2
            \int_{\R^2} |\nabla u |^2 \intd x +r^2\left(
              \int_{\Sphere^2} | \nabla v_0|^2 \intd \mathcal H^2(z)
              -2\int_{\Sphere^2} | v_0|^2 \intd \mathcal H^2(z)
            \right).
	  \end{split}
	\end{align}
	The non-negativity of the second variation of the Dirichlet
        energy at minimizers for tangent vector fields on the sphere
        \cite[(4.5)]{bernand2019quantitative} finally implies
	\begin{align}
		\begin{split}
                  \liminf_{\kappa \to 0} \kappa^{-2}Z(m_\kappa) & \geq
                  r^2 \int_{\R^2} |\nabla u |^2 \intd x = r^2
                  \int_{\R^2} |\nabla \tilde u|^2 \intd x \geq r^2 T(
                  a_0),
		\end{split}
	\end{align}
        where we noted that $\tilde u:= (R_0')^{-1} u$ satisfies the
        boundary data required in the definition of $T$, see identity
        \eqref{eq:boundary_data}.
\end{proof}

\begin{proof}[Proof of Proposition \ref{prop:upper_bound}]
  In contrast to the rest of the paper, in this proof the constant $C$
  may depend on $r_0$ and $ a_0$.  We fix
  $(R_0, r_0, a_0) \in \mathcal A_0$ and take
  $u \in \mathring H^1(\R^2;\R^2)\cap L^2_{\mathrm{loc}}(\R^2;\R^2)$
  with $u(x) = 2\frac{x- a_0}{|x- a_0|^2}$ for
  $x\in \R^2\setminus \Omega$ and achieving
	\begin{align}
		\int_{\R^2}  |\nabla u|^2 \intd x = T( a_0).
	\end{align}
	In particular, $u$ is harmonic in $\Omega$, unique, and by the
        maximum principle satisfies $u \in L^\infty(\R^2; \R^2)$.

        We start by constructing two auxiliary functions which will be
        useful in the construction of the recovery sequence.

\textit{Step 1: Truncating $u$ at $ a_0$.}

For
$\delta \in \left(0,\frac{1}{2} \wedge
  \frac{1}{2}\operatorname{dist}^2( a_0, \R^2\setminus \Omega)\right)$ 
and $x\in \R^2$, we define
	\begin{align}
          \eta_\delta (x):= \begin{cases}
            1 & \text{ if } |x -  a_0| \leq \delta, \\
            \frac{2 \log|x- a_0|}{\log \delta } - 1 & \text{ if }
            \delta <|x -  a_0| \leq \delta^\frac{1}{2},\\ 
            0 & \text{ else.}
		\end{cases}
	\end{align}
	This function satisfies $\eta_\delta \in H^1(\R^2)$,
        $\operatorname{supp} \eta_\delta \subset B_{\sqrt{\delta}}( a_0)
        \subset \Omega$ and
	\begin{align}\label{eq:capacity}
		\int_{\R^2} |\nabla \eta_\delta|^2 \intd x \leq
          \frac{C}{|\log \delta|}.
	\end{align}
	We now set $u_\delta:= (1-\eta_\delta)u$ in order to enforce
        $u_\delta = 0$ in $B_{\delta}( a_0)$.  Then we still have
        $u_\delta \in \mathring H^1(\R^2; \R^2) \cap L^\infty(\R^2;
        \R^2)$, $u_\delta (x) = 2\frac{x- a_0}{|x- a_0|^2}$ for
        $x\in \R^2\setminus \Omega$ and
	\begin{align}\label{eq:dense_in_energy}
	  \begin{split}
          \int_{\R^2} |\nabla u_\delta|^2 \intd x - \int_{\R^2}
          |\nabla u|^2 \intd x & = \int_{B_{\sqrt{\delta}}( a_0)}
          \left(|- u \otimes \nabla \eta_\delta + (1-\eta_\delta)
          \nabla u|^2 - |\nabla u|^2 \right) \intd x\\
          & \leq C \int_{B_{\sqrt{\delta}}( a_0)}
          \left(| \nabla \eta_\delta|^2 + |\nabla u|^2 \right) \intd x \\
          & = o_\delta(1) 
      \end{split}
	\end{align}
	by the estimate \eqref{eq:capacity} and $|\nabla u| \in
        L^2(\R^2)$.
	
        \textit{Step 2: Construct the boundary data corrector
          $v_\kappa: \R^2 \to \R^2$ with $v_\kappa(x) = 0$ for all
          $x \in B_{ \operatorname{dist}(a_0,\partial \Omega)/2}(a_0)$
          and
	\begin{align}\label{eq:estimate_correction}
          \|v_\kappa\|_{L^\infty(\R^2)}  +\|\nabla
          v_\kappa\|_{L^2(\R^2)}  \leq C\kappa^2. 
	\end{align}}
      
      Let $\phi_\kappa := R_0 \Phi((r_0\kappa) ^{-1} (\bullet - a_0))$
      and again let $R_0'\in SO(2)$ be such that $R_0v = (R'_0v',v_3)$
      for all $v \in \R^3$.  In order to achieve the correct boundary
      data, we define
      $v_\kappa(x):= - \phi'_\kappa(x) - r_0\kappa R'_0 u$ for
      $x\in \R^2\setminus \Omega$ and $v_\kappa = 0$ in
      $B_{ \operatorname{dist}(a_0,\partial \Omega)/2}(a_0)$.
      Exploiting the estimates \eqref{eq:convergence_tail_exterior}
      and \eqref{eq:convergence_tail_gradient_exterior}, we can extend
      $v_\kappa(x)$ using \cite[Theorem 3.1]{evans} to a Lipschitz
      function on $\R^2$ such that
     \begin{align}
       \|v_\kappa\|_{W^{1,\infty}(\R^2)} \leq C
       \kappa^2.
     \end{align}
     The $L^2$ estimate for the gradient on the whole space follows.

\textit{Step 3: Definition of the recovery sequence.}

Having introduced the boundary corrector $v_\kappa$, we may now define
the test magnetizations
$m_{\kappa,\delta} \in \mathring H^1(\R^2;\Sphere^2)$ as follows:
	\begin{align}
          m_{\kappa,\delta} (x) :=
		\begin{cases}
                  \phi_\kappa(x) & \text{ if } x \in B_\delta( a_0 ),\\
                  p\left( \phi'_\kappa + r_0\kappa R'_0
                    u_\delta+v_\kappa\right) & \text{ if } x \in
                  \R^2\setminus B_\delta( a_0),
		\end{cases} 
	\end{align}
	where the map $p: \R^2 \to \Sphere^2$ lifts
        $v \in B_1(0) \subset \R^2$ to $\Sphere^2$ via
        $p(v) :=\left(v,-\sqrt{1-v^2}\right)$.  It is clear that since
        $u$ and $u_\delta$ coincide outside $B_{\sqrt{\delta}}(a_0)$,
        using the definition of $v_\kappa$, we have
        $m_{\kappa,\delta} (x)=-e_3$ for all
        $x \in \R^2 \setminus \Omega$. Furthermore, for small enough
        $\delta>0$ we have $u_\delta=0$ and $v_\kappa=0$ in
        $\overline{B_\delta}(a_0)$ and therefore the test
        configuration $m_{\kappa,\delta} $ is well defined for all
        $\kappa$ sufficiently small depending on $\delta$.  We also
        have
          \begin{align} \label{eq:smallness} \| \phi'_\kappa+
            r_0\kappa R_0' u_\delta +v_\kappa\|_{L^\infty(\R^2
              \setminus B_\delta(a_0))} \leq C_\delta \kappa,
    \end{align}
    by the definition of $\Phi'$, boundedness of $u_\delta$, and
    estimate \eqref{eq:estimate_correction}. Here and in the
    following, the symbol $C_\delta>0$ denotes a generic positive
    constant depending only on $\Omega$, $r_0$, $a_0$, and $\delta$.

    Let
    $q_{\kappa,\delta} (x) := m_{\kappa,\delta}(x) -
    \phi_\kappa(x) - r_0\kappa R_0 (u_\delta(x),0)$.  For
    $x\in B_\delta(a_0)$ we of course have $q_{\kappa,\delta} = 0$.
    For $x\in \R^2\setminus B_\delta(a_0)$ we compute
    \begin{align}
      q_{\kappa,\delta}(x) = \left(v_\kappa, p_3\left(
      \phi'_\kappa + r_0\kappa R'_0
      u_\delta+v_\kappa\right)  - p_3\left( \phi'_\kappa
      \right)  \right). 
    \end{align}
    Using estimates \eqref{eq:estimate_correction} and
    \eqref{eq:smallness}, as well as Lipschitz continuity of the
    square root near 1, we get
    \begin{align}
      \|q_{\kappa,\delta}\|_{L^\infty(\R^2)} 
      \leq C\left(\kappa^2 +  \|\phi'_\kappa + r_0\kappa R'_0
      u_\delta+v_\kappa\|_{L^\infty(\R^2\setminus
      B_\delta(a_0))}^2 +\|\phi'_\kappa\|_{L^\infty(\R^2\setminus
      B_\delta(a_0)}^2  \right)  
      \leq C_\delta\kappa^2. 
    \end{align}
    To estimate the $H^1$-norm of $q_{\kappa,\delta}$, note that for
    any $v \in H^1(\R^2)$ with $\| v \|_{L^\infty(\R^2)} < 1$ we have
    $\nabla p_3(v) = \frac{v}{\sqrt{1-v^2}} \nabla v$ a.e. in $\R^2$
    by the weak chain rule.  Therefore, arguing as in
    \cite[(A.67)]{bernand2019quantitative} and using
    \eqref{eq:dense_in_energy}, \eqref{eq:estimate_correction} and
    \eqref{eq:smallness}, for all $\kappa$ small enough we get
	\begin{align}
          \left\| \nabla \left( p_3(\phi'_\kappa) \right)
          \right\|_{L^2(\R^2\setminus B_\delta(a_0))}
          & \leq
            C_\delta\kappa^2,\\ 
          \left\| \nabla \left( p_3\left( \phi'_\kappa + r_0\kappa
          R'_0 u_\delta+v_\kappa\right)\right)
          \right\|_{L^2(\R^2\setminus B_\delta(a_0))}
          & \leq  C_\delta\kappa^2,
	\end{align} 
	so that again with \eqref{eq:estimate_correction} we have
	\begin{align}\label{eq:estimate_remainder}
		\| \nabla q_{\kappa,\delta}\|_{L^2(\R^2)} \leq  C_\delta\kappa^2.
	\end{align}
        In particular, by the definition of $q_{\kappa, \delta}$ and
        \eqref{eq:dense_in_energy} we have the estimate
	\begin{align}\label{eq:construction_close_to_BP}
          \int_{\R^2} |\nabla (m_{\kappa,\delta} -\phi_{\kappa}) |^2
          \intd x \leq C_\delta \kappa^2,
	\end{align}
        for all $\kappa$ sufficiently small depending on $\delta$.  In
        particular, by the definition of
        $\mathcal{N}(m_{\kappa, \delta})$ in \eqref{eq:N} and the fact
        that $\mathcal{N}(m_{\kappa, \delta}) \in \mathbb Z$ we have
        $\mathcal{N}(m_{\kappa, \delta}) = 1$ and, therefore,
        $m_{\kappa,\delta} \in \mathcal{A}$ for small enough $\kappa$.
	
\textit{Step 4: Computation of the energy.}

By Lemma \ref{lemma:lower_bound_DMI} and estimate
\eqref{eq:construction_close_to_BP}, we have
	\begin{align}
		\begin{split}
                  & \quad \left|-2\kappa \int_\Omega
                    \m'_{\kappa,\delta}\cdot \nabla
                    m_{\kappa,\delta,3} \intd x + 2 \kappa \rho
                    \int_{\R^2} (R_0 (\Phi+e_3))'\cdot \nabla
                    \Phi_3  \intd x \right| \\
                  & \leq C_\delta \kappa^\frac{11}{5}.
		\end{split}
	\end{align}
	For the Dirichlet energy, we again use \cite[Lemma
        A.4]{bernand2019quantitative} to get
	\begin{align}
	  \begin{split}
            \int_{\R^2} |\nabla m_{\kappa,\delta} |^2 \intd x - 8\pi
            & = \int_{\R^2} |\nabla (m_{\kappa,\delta} -\phi_{\kappa})
            |^2 \intd x - \int_{\R^2} |m_{\kappa,\delta}
            -\phi_{\kappa}|^2|\nabla \phi_{\kappa} |^2 \intd x\\ 
            & \leq \int_{\R^2} |\nabla (m_{\kappa,\delta}
            -\phi_{\kappa}) |^2 \intd x .
	  \end{split}
	\end{align}
	By the estimates \eqref{eq:estimate_remainder} and
        \eqref{eq:dense_in_energy}, we get
	\begin{align}
	  \begin{split} 
            \int_{\R^2} |\nabla m_{\kappa,\delta} |^2 \intd x - 8 \pi
            & \leq r_0^2\kappa^2 \int_{\R^2} |\nabla u_\delta|^2 \intd
            x + C_\delta \kappa^3  \\
            & \leq r_0^2\kappa^2 \int_{\R^2} |\nabla u|^2 \intd x +
            C_\delta \kappa^3 + \kappa^2 o_\delta(1).
	  \end{split}
	\end{align}
	Therefore, we have
	\begin{align}
         \limsup_{\kappa\to 0}
          \frac{\mathcal{E}_\kappa(m_{\kappa,\delta}) -
          8\pi}{\kappa^2} \leq \mathcal{E}_0(R_0,r_0,a_0) + o_\delta(1). 
	\end{align}
	By a diagonal argument, the statement then follows.
      \end{proof}

        \begin{proof}[Proof of Theorem \ref{thm:convergence}]
          After we established the compactness of sequences obeying
          \eqref{eq:negative_energy} with respect to the
          BP-convergence in Lemma \ref{lemma:compactness}, the
          statement of Theorem \ref{thm:convergence} follows by
          combining Propositions \ref{prop:lower_bound} and
          \ref{prop:upper_bound}, and noting that by Proposition
          \ref{prop:lower_bound} the limit of the sequence in Lemma
          \ref{lemma:compactness} belongs to $\mathcal A_0$.
        \end{proof}

  
\section{Analyzing the limit problem}
\label{sec:lim}

\begin{proof}[Proof of Theorem \ref{thm:minimizers}]
  By the properties of $\Gamma$-convergence, minimizers $m_\kappa$ of
  $\mathcal{E}_\kappa$ BP-converge to minimizers
  $(R_0,r_0, a_0) \in \mathcal{A}_0$ of $\mathcal{E}_0$ as
  $\kappa \to 0$ with the rate
	\begin{align}
          \int_{\R^2} |\nabla (m_\kappa -\phi_\kappa)|^2 \intd x \leq
          C \kappa^2, 
	\end{align}
	where
        $\phi_\kappa:= R_0\Phi\left(\frac{\bullet \ - \ a_0}{\kappa
            r_0} \right)$.  Note that Theorem \ref{thm:convergence}
        does apply to minimizers of $\mathcal E_\kappa$ over
        $\mathcal A$ in view of Lemma \ref{lemma:construction}.
	
	Recall that
	\begin{align}
          \mathcal{E}_0 (R_0,r_0, a_0) = r_0^2 T( a_0) - 2 r_0
          \int_{\R^2} (R_0\Phi)' \cdot \nabla \Phi_3 \intd x. 
	\end{align}
        Since by the Cauchy-Schwarz inequality the integrand of the
        DMI term is minimized at each point when $R'\Phi'$ and
        $\nabla \Phi_3$ are parallel and since $\Phi'$ is parallel
          to $\nabla \Phi_3$, the DMI term as a whole is minimized for
          $R_0= \operatorname{id}$.  Direct calculation or
        \cite[Lemma A.5]{bernand2019quantitative} gives
 	\begin{align}
          2 \int_{\R^2} \Phi' \cdot \nabla \Phi_3 \intd x = 8\pi.
 	\end{align}
      For
        $ a_0 \in \operatorname{arg min}_{a\in \Omega} T(a)$
        minimizing in $r_0$ therefore gives
 	\begin{align}
 		r_0& = \frac{4\pi}{T( a_0)},\\
 		\mathcal{E}_0 (R_0,r_0, a_0) & = - \frac{16\pi^2}{T( a_0)}.
 	\end{align}
        which completes the proof.
\end{proof}

\begin{proof}[Proof of Proposition \ref{lemma:representation}]
  First of all, observe that under our assumptions, \eqref{eq:ez0} is
  uniquely solvable in
  $C^\infty(\Omega; \mathbb C) \cap C^{1,\alpha}(\overline \Omega ;
  \mathbb C)$, see \cite[Theorem 8.34]{gilbarg}. The expression in
  \eqref{eq:Tuz} may be conveniently rewritten as an integral over
  $\partial \Omega$. Integrating by parts and using that $u_{z_0}$ is
  harmonic both inside and outside $\Omega$ (it is anti-holomorphic in
  $\Omega^c$), we obtain
\begin{align}
  \label{eq:Tu}
  \begin{split}
  T(z_0)
  & =  \int_\Omega \nabla \cdot ( \bar u_{z_0} \nabla u_{z_0} )  \,
    \intd x + \int_{\Omega^c} \nabla \cdot ( \bar u_{z_0} \nabla
    u_{z_0} )  \,  
    \intd x  \\
  & = \int_{\partial \Omega} \bar u_{z_0} \left( \left. \partial_\nu u_{z_0}
    \right|_{\Omega} - \left. \partial_\nu u_{z_0}
    \right|_{\Omega^c} \right) \intd \mathcal H^1(z),
  \end{split}
\end{align}
where $\partial_\nu$ denotes the derivative in the direction of the
outward unit normal to $\partial \Omega$.

Since $\Omega$ is simply connected, both the real and the imaginary
parts of the harmonic function $u_{z_0}$ possess the unique, up to
constants, harmonic conjugates in $\overline \Omega$ that belong to
$C^{1,\alpha}(\overline \Omega)$. Therefore $u_{z_0}$ admits a
decomposition
\begin{align}
  \label{eq:fg}
  u_{z_0}(z) = f(z) + \overline{g(z)} \qquad z \in \overline{\Omega},
\end{align}
for two functions $f(z)$ and $g(z)$ that are holomorphic in
$\overline \Omega$.

Recall that if $\nu : \partial \Omega \to \mathbb C$ represents the
outward unit normal to $\partial \Omega$ and $f$ is holomorphic in
$\overline \Omega$, we have $\partial_\nu f = \nu f'$ on
$\partial \Omega$, where the prime denotes the usual derivative of a
holomorphic function. Similarly, if $\tau := i \nu$ represents the
unit tangent to $\partial \Omega$ in the counter-clockwise direction,
the tangential derivative $\partial_\tau f = \tau f'$ on
$\partial \Omega$. Therefore, using \eqref{eq:fg} we can write
\begin{align}
  \left. \partial_\nu u_{z_0} \right|_\Omega = \nu f' +
  \overline{\nu g'}, \qquad \left. \partial_\nu u_{z_0} \right|_{\Omega^c}
  = -{2 \bar \nu \over (\bar z - \bar z_0)^2}
\end{align}
on $\partial \Omega$.  At the same time, by continuity of $u_{z_0}$
across $\partial \Omega$ we have
\begin{align}
  \tau f' + \overline{\tau g'} = -{2 \bar \tau \over (\bar z - \bar z_0)^2}
\end{align}
on $\partial \Omega$. Thus we have
\begin{align}
  \left. \partial_\nu u_{z_0} \right|_\Omega - \left. \partial_\nu u_{z_0}
  \right|_{\Omega^c} = -2 i \tau f'
\end{align}
on $\partial \Omega$, and the integral in \eqref{eq:Tu} can be
rewritten as a Cauchy type contour integral
\begin{align}
  T(z_0) = - 4 i \oint_{\partial \Omega} {f'(z) \over z - z_0} \intd
  z. 
\end{align}
Finally, applying the residue theorem, we obtain
$T(z_0) = 8 \pi f'(z_0)$, which is precisely \eqref{eq:TzW}.
\end{proof}

\subsection{Disks}

\begin{proof}[Proof of Proposition \ref{prop:balls}]
  Without loss of generality, we may assume $\ell = 1$. Let $u_{z_0}$
  be defined by \eqref{def:minimizer_ball}. We recall that for
  $z \in \mathbb C \backslash B_1(0)$ we have
  \begin{align}
    u_{z_0}(z) = \frac{2}{\bar z - \bar z_0}.
  \end{align}
  In particular, up to complex conjugation, $u_{z_0}$ is invariant
  under the Kelvin transform, i.e., for all
  $z \in \overline{B_1(0)}$ we have
  \begin{align}
    u_{ z_0}\left( \bar z^{-1} \right) =  \frac{2 z}{1 - \bar  z_0 z}
    =   u_{z_0}(z),
  \end{align}
  which is holomorphic and, therefore, harmonic in
  $B_1(0)$. Furthermore, the function $u_{ z_0}$ is continuous
  across $\partial B_1(0)$.  By the uniqueness of boundary value
  problems for harmonic functions and continuity at the boundary,
  $u_{z_0}$ is indeed the function achieving $T(z_0)$.  By
    Proposition \ref{lemma:representation}, we obtain
  \begin{align}
    T(z_0) = \frac{16 \pi}{(1 - |z_0|^2)^2}.
  \end{align}  
  Clearly, this expression is minimized for $z_0 = 0$. The rest of the
  statement is obtained by a direct substitution.
\end{proof}

\subsection{Strips}\label{sec:proof_stripes}

Instead of giving a full proof for the $\Gamma$-convergence in the
case of a strip $\Omega_\ell := \R \times (-\ell/2,\ell/2)$ for
$\ell>0$, we point out the places in which the proof for bounded sets
needs to be adjusted.

The first adjustment concerns Lemma \ref{lemma:a_priori}, where a
Poincar{\'e} estimate still holds for all admissible $m$.
Furthermore, the proof of existence of a minimizer follows the lines
of the proof in the whole space \cite{bernand2019quantitative} to deal
with the non-compact invariance under horizontal shifts.

Step 1 of Lemma \ref{lemma:lower_bound_dirichlet} applies verbatim.
Lemma \ref{lemma:skyrmion_tail} also works similarly, one just needs
to use the decay behavior of the leading order contribution of $w_m$
when applying a Poincar{\'e} type estimate on slices
$\{x_1\}\times (-\ell,\ell)$ in order to achieve finite $L^2$ norm of
$w_m$ on $\Omega_\ell$.  In Lemma \ref{lemma:lower_bound_DMI},
additional care needs to be taken in the integration
\eqref{eq:L2_of_BP}, i.e.,
	\begin{align}
	\int_{\rho^{-1}(\Omega_\ell-{ a })} |\Phi + e_3|^2
	\intd x  \leq C |\log \rho|,
\end{align}
although the result is unchanged.  Furthermore, the Sobolev embedding
$H^1(\Omega_\ell) \hookrightarrow L^5(\Omega_\ell)$ still works by
virtue of $\Omega_\ell$ being an extension domain and \cite[Theorem
8.5(ii)]{lieb2001analysis}.

The remaining arguments work the same, up to the adjustment that due
to translation invariance the component $(a_\kappa)_1$ cannot be
controlled.

\begin{proof}[Proof of Proposition \ref{prop:stripes}]

  Again, without loss of generality we may assume $\ell=\pi/2$. Let
  $\Omega :=\R\times (-\frac\pi 4,\frac\pi 4)$, and let $u_{y_0}$ be
  the map defined in equation \eqref{def:minimizer_stripes} for
  $\ell=\pi/2$ and $y_0\in (-\frac\pi 4,\frac\pi 4)$.  The fact that
  $u_{y_0}$ satisfies the boundary conditions follows from the two
  elementary identities:
    \begin{align}
      \tanh\left( z \pm \frac{i \pi}{4}\right) = \coth\left( z \mp
      \frac{i \pi}{4}\right)  \quad \text{for all $z \in \mathbb{C}$.}
    \end{align}	
    Furthermore, as $u_{y_0}$ is a sum of a holomorphic and an
    anti-holomorphic functions in $\Omega$, it is harmonic in all the
    points where it is finite. As is well-known, the only
    singularities of both $\tanh z$ and $\coth z$ are simple poles on
    the imaginary axis.  Since
    $|\operatorname{Im}(z+iy_0)| < \frac{\pi}{2} $ for all
    $z \in \Omega$, the $\tanh$ contribution is smooth in $\Omega$.
    For the same reason, the $\coth$ contribution only has a
    singularity at $z= iy_0$, which, however, is precisely
    counterbalanced by $\frac{2}{\bar z +iy_0}$, as can be seen from
    the Laurent series of $\coth$ at the origin.  Therefore $u_{y_0}$
    is indeed the map achieving $T(iy_0)$.  As $u_{y_0}$ decays
      sufficiently quickly at infinity, the arguments leading to
      Proposition \ref{lemma:representation} may also be adapted to
      the setting of strips, whereby we have
	\begin{align}
		\label{energyComputation}
          T(iy_0)=\frac{16\pi}{\cos^2(2y_0)}.
	\end{align}
        This function is clearly minimized by $y_0=0$, giving the
        statement.
\end{proof}

\subsection{Half-plane}

\begin{proof}[Proof of Proposition \ref{prop:half_space}]
	
  Checking that $u_{y_0}$ is indeed the minimizer realizing $T(i y_0)$
  is trivial, and the rest of the statement is obtained via
  Proposition \ref{lemma:representation}, again, adapted to the
  half-plane setting.
\end{proof}

\section{Anisotropy as a continuous perturbation}
\label{sec:anicont}

\begin{proof}[Proof of Proposition \ref{lemma:anisotropy}]
  Due to the properties of the BP-convergence, there exist sequences
  $R_n \in SO(3)$, $\rho_n>0$ and $a_n \in \Omega$ such that with
  $\phi_n (x) := R_n\Phi(\rho_n^{-1}(x-a_n))$ for $x\in \R^2$ the
  estimates of Definition \ref{def:topology} hold. Again, by
  Friedrichs' inequality \cite[Corollary 6.11.2]{mazya} we have
  \begin{align}
    \label{eq:Frid}
		 \begin{split}
                   & \quad \int_\Omega \left| m_{\kappa_n} + e_3 -
                     \phi_n - R_n e_3  \right|^2 \intd x\\
                   & \leq C\left(\int_\Omega \left|\nabla (
                       m_{\kappa_n} - \phi_n) \right|^2 \intd x +
                     \int_{\partial B_{\mathrm{diam}(\Omega)}(a_0)}
                     \left| \phi_n + R_n e_3 \right|^2 \intd \mathcal
                     H^1(x) \right),
		\end{split}
		\end{align}
                which allows to control the $L^2$--distance between
                $m_{\kappa_n}$ and the Belavin-Polyakov profile
                $\phi_n$ that approximates it. In particular, by Lemma
                \ref{lemma:skyrmion_tail} and the properties of the
                BP-convergence, the two error terms in the right-hand
                side of \eqref{eq:Frid} are of order $\kappa_n^2$ for
                all $n$ large enough. Therefore, additionally
                reparametrizing the integral by the factor
                $\rho_n^{-1}$ in the second step and using the
                assumption
                $\lim_{n\to \infty}\frac{\rho_n}{\kappa_n} = r_0$, we
                have that
	\begin{align}
		\begin{split}
                  \lim_{n\to \infty} \frac{1}{\kappa_n^2 |\log
                    \kappa_n|} \int_{\Omega} |m'_{\kappa_n}|^2 \intd x
                  & =  \lim_{n\to \infty} \frac{1}{\kappa_n^2 |\log
                    \kappa_n|} \int_{\Omega}\left|\left((R_n
                      (\Phi(\rho_n^{-1}(x-a_n)) +
                      e_3)\right)'\right|^2 \intd x\\ 
                  & = \lim_{n\to \infty} \frac{r_0^2}{|\log \kappa_n|}
                  \int_{\rho_n^{-1}(\Omega-a_n)}\left|\left((R_n
                      (\Phi(x) + e_3)\right)'\right|^2 \intd x.
		\end{split}
	\end{align}

	Since $\Phi_3+1 \in L^2(\R^2)$, the contribution of
        $R_n(0, \Phi_3+1)$ in the last integral is negligible, so that
        by expanding the square we get
	\begin{align}\label{eq:reduced_expression}
		\begin{split}
                  & \quad \lim_{n\to \infty} \frac{r_0^2}{|\log
                    \kappa_n|}
                  \int_{\rho_n^{-1}(\Omega-a_n)}\left|\left((R_n
                      (\Phi(x) + e_3)\right)'\right|^2 \intd x\\ &
                  =\lim_{n\to \infty} \frac{r_0^2}{|\log
                    \kappa_n|}
                  \int_{\rho_n^{-1}(\Omega-a_n)}\left|\left(R_n
                      \left(\frac{2x}{1+|x|^2},0\right)\right)'\right|^2
                  \intd x.
		\end{split}  
	\end{align}
	Therefore, together with the fact that $|(R_nv)'| \leq |v|$ for
        all $v\in \R^3$ we have
	\begin{align}\label{eq:anisotropy_upper_bound}
		\begin{split}
                  \lim_{n\to \infty} \frac{r_0^2}{|\log \kappa_n|}
                  \int_{\rho_n^{-1}(\Omega-a_n)}\left|\left(R_n
                      \left(\frac{2x}{1+|x|^2},0\right)\right)'\right|^2
                  \intd x \qquad \qquad \\
                  \leq \liminf_{n\to \infty} \frac{8 \pi r_0^2}{|\log
                    \kappa_n|} \left( \int_0^1 s^3 \intd s +
                    \int_1^{\rho_n^{-1}\operatorname{diam}(\Omega)}
                    \frac{\intd s}{s} \right) = 8 \pi r_0^2,
		\end{split}
	\end{align}
        where we recalled that $\rho_n / \kappa_n \to r_0$ as
          $n \to \infty$ by the BP-convergence.
	
	At the same time, as
        $\lim_{n\to \infty} a_n = a_0 \in \Omega$, there exists
        an $\tilde s>0$ such that
        $B_{\tilde s} (a_n) \subset \Omega$ for all
        $n \in \mathbb{N}$.  By the estimate
        \eqref{eq:anisotropy_upper_bound} the expression on the
        right-hand side of estimate \eqref{eq:reduced_expression} is
        bounded, so that we can pass to the limit in the rotation
        $\lim_{n\to \infty} R_n = R_0$ and exploit
        $R_0 e_3 = e_3$, to get
	\begin{align}\label{eq:anisotropy_lower_bound}
		\begin{split}
                  & \quad \lim_{n\to \infty} \frac{r_0^2}{|\log
                    \kappa_n|}
                  \int_{\rho_n^{-1}(\Omega-a_n)}\left|\left(R_n
                      \left(\frac{2x}{1+|x|^2},0\right)\right)'\right|^2
                  \intd x \\
                  & \geq \limsup_{n\to \infty} \frac{r_0^2}{|\log
                    \kappa_n|} \int_{B_{\rho_n^{-1}\tilde
                      s}(0)}\frac{4|x|^2}{(1+|x|^2)^2} \intd x \\
                  & = \limsup_{n\to \infty} \frac{8 \pi r_0^2}{|\log
                    \kappa_n|} \int_1^{\rho_n^{-1}\tilde
                    s}\frac{\intd s}{s}  \\
                  & = 8\pi r_0^2.
		\end{split}
	\end{align}
        The statement then follows from combining the estimates
        \eqref{eq:anisotropy_upper_bound} and
        \eqref{eq:anisotropy_lower_bound}.
\end{proof}

\printbibliography

\end{document}